\newif\ifpreprint
\preprinttrue   

\ifpreprint
\documentclass[12pt]{article}

\usepackage{fullpage}

\usepackage{hyperref}
\hypersetup{colorlinks=true, citecolor=cyan, linkcolor=cyan, urlcolor=cyan}

\usepackage[
backend=biber,
style=ieee,
sortcites=false,
sorting=nyt
]{biblatex}
\newtoggle{bbx:datemissing}

\renewbibmacro*{date}{\toggletrue{bbx:datemissing}%
}

\renewbibmacro*{issue+date}{%
  \toggletrue{bbx:datemissing}%
  \iffieldundef{issue}
    {}
    {\printtext[parens]{%
       \printfield{issue}}}%
  \newunit}

\newbibmacro*{date:print}{%
  \togglefalse{bbx:datemissing}%
  \printdate}

\renewbibmacro*{chapter+pages}{%
  \printfield{chapter}%
  \setunit{\bibpagespunct}%
  \printfield{pages}%
  \newunit
  \usebibmacro{date:print}%
  \newunit}

\renewbibmacro*{note+pages}{%
  \printfield{note}%
  \setunit{\bibpagespunct}%
  \printfield{pages}%
  \newunit
  \usebibmacro{date:print}%
  \newunit}

\renewbibmacro*{addendum+pubstate}{%
  \iftoggle{bbx:datemissing}
    {\usebibmacro{date:print}}
    {}%
  \printfield{addendum}%
  \newunit\newblock
  \printfield{pubstate}}

\else
\documentclass[pdflatex,sn-mathphys-num]{sn-jnl}
\fi

\usepackage{graphicx}
\usepackage{pgfplots}
\usepackage{subcaption}
\usepackage{amsmath,amssymb,amsthm,enumitem,mathtools}

\usepackage{multirow}
\usepackage{algorithm}
\usepackage[noend]{algpseudocode}

\algnewcommand\algorithmicinitialization{\textbf{Initialization:}}
\algnewcommand\Initialization{\item[\algorithmicinitialization]}

\usepackage{csvsimple}	
\usepackage{booktabs}   
\usepackage{adjustbox}  
\usepackage{calc}

\newcommand{\bnote}[1]{}
\newcommand{\anote}[1]{}
\newcommand{\ynote}[1]{}
\newcommand{\snote}[1]{}
\ifpreprint
\renewcommand{\bnote}[1]{\textcolor{cyan}{\textbf{[B: #1]}}}
\renewcommand{\anote}[1]{\textcolor{magenta}{\textbf{[A: #1]}}}
\renewcommand{\ynote}[1]{\textcolor{black}{\textbf{[Y: #1]}}}
\renewcommand{\snote}[1]{\textcolor{brown}{\textbf{[S: #1]}}}
\fi

\usepackage[xindy, acronyms]{glossaries}   
\makeglossaries

\newcommand{\reals}{\mathbb{R}}

\newcommand{\argmin}{\mathop{\rm argmin}}

\newcommand{\NN}{P}
\newcommand{\SOS}{\mathbf{\Sigma}}
\newcommand{\HNN}{\widehat{P}}
\newcommand{\HSOS}{\widehat{\mathbf{\Sigma}}}
\newcommand{\DiSOS}{\mathbf{\Sigma}}
\newcommand{\DiSOSL}{\text{NC}}
\newcommand{\defeq}{\coloneq}

\newcommand{\st}{\text{s.t.}}
\newcommand{\cone}{\operatornamewithlimits{cone}}
\newcommand{\conv}{\operatornamewithlimits{conv}}

\newcommand{\mcQ}{\mathcal{Q}}

\newcommand{\mcC}{\mathcal{C}}

\newcommand{\mcN}{\mathcal{N}}
\newcommand{\mbS}{\mathbb{S}}
\newcommand{\mcT}{\mathcal{T}}
\newcommand{\mcD}{\mathcal{D}}
\newcommand{\mcV}{\mathcal{V}}
\newcommand{\mcPN}{\mathcal{PN}}

\newtheorem{theorem}{Theorem}
\newtheorem{lemma}[theorem]{Lemma}  
\newtheorem{definition}{Definition}
\theoremstyle{remark}
\newtheorem{remark}{Remark}

\newacronym{LO}{LO}{linear optimization}
\newacronym{QO}{QO}{quadratic optimization}
\newacronym{MIQO}{MIQO}{mixed-integer quadratic optimization}
\newacronym{MIO}{MIO}{mixed-integer optimization}
\newacronym{MILO}{MILO}{mixed-integer linear optimization}
\newacronym{MINLO}{MINLO}{mixed-integer nonlinear optimization}
\newacronym{sBB}{sBB}{spacial branch and bound}
\newacronym{NLO}{NLO}{nonlinear optimization}
\newacronym{PWA}{PWA}{piecewise affine}
\newacronym{SVM}{SVM}{support vector machines}
\newacronym{ReLU}{ReLU}{rectified linear unit}
\newacronym{CPU}{CPU}{central processing unit}
\newacronym{GPU}{GPU}{graphics processing unit}
\newacronym{MPC}{MPC}{model predictive control}
\newacronym{ADMM}{ADMM}{alternating direction method of multipliers}
\newacronym{ADP}{ADP}{approximate dynamic programming}
\newacronym{FPGA}{FPGA}{field-programmable gate array}



\newcommand{\abstracttext}{We introduce the concept of \emph{disjunctive sum of squares} for certifying nonnegativity of polynomials. Unlike the popular sum of squares approach where nonnegativity is certified by a single algebraic identity, the disjunctive sum of squares approach certifies nonnegativity with multiple algebraic identities which can be found in parallel. Our main result is a disjunctive Positivstellensatz proving that we can keep the degree of each algebraic identity as low as the degree of the polynomial whose nonnegativity is in question. Based on this result, we construct a semidefinite programming based converging hierarchy of lower bounds for the problem of minimizing a polynomial over a compact basic semialgebraic set, where the size of the largest semidefinite constraint is fixed throughout the hierarchy. We further prove a second disjunctive Positivstellensatz which leads to an optimization-free hierarchy for polynomial optimization. We specialize this result to the problem of proving copositivity of matrices. Finally, we describe how the disjunctive sum of squares approach can be combined with a branch-and-bound algorithm and we present numerical experiments on polynomial, copositive, and combinatorial optimization problems.}

\begin{document}

\ifpreprint
\title{ Disjunctive Sum of Squares }
\author{Amir Ali Ahmadi\thanks{Princeton University, Operations Research and Financial Engineering.
AAA was partially supported by the Sloan Fellowship, the Princeton AI Lab Seed Grant, the Princeton SEAS Innovation Grant, and a Research Gift in Mathematical Optimization.
YH was partially supported by the Princeton AI Lab Seed Grant, the Princeton SEAS Innovation Grant, and the IBM PhD Fellowship.},
 Sanjeeb Dash\thanks{IBM TJ Watson Research Center.},
Yixuan Hua\footnotemark[1],
Bartolomeo Stellato\footnotemark[1]}
\date{}
\maketitle

\begin{abstract}
\abstracttext
\end{abstract}
\else
\title[Disjunctive Sum of Squares]{Disjunctive Sum of Squares}
\author*[1]{\fnm{Amir Ali} \sur{Ahmadi}}\email{aaa@princeton.edu}
\author[2]{\fnm{Sanjeeb} \sur{Dash}}\email{sanjeebd@us.ibm.com}
\author[1]{\fnm{Yixuan} \sur{Hua}}\email{yh7422@princeton.edu}
\author[1]{\fnm{Bartolomeo} \sur{Stellato}}\email{bstellato@princeton.edu}
\affil[1]{\orgdiv{Department of Operations Research and Financial Engineering}, \orgname{Princeton University}, \orgaddress{\city{Princeton}, \state{NJ}, \country{USA}}}
\affil[2]{\orgdiv{Mathematics of Computation Department}, \orgname{IBM TJ Watson Research Center}, \orgaddress{\city{Yorktown Heights}, \state{NY}, \country{USA}}}
\abstract{\abstracttext}
\keywords{TODO, TODO, TODO}
\pacs[MSC Classification]{TODO}
\maketitle
\fi


\section{Introduction}
A polynomial $p:\mathbb{R}^n\rightarrow\mathbb{R}$ is said to be \emph{nonnegative} if $p(x)\ge 0$ for all $x\in\reals^n$. 
The problem of optimizing over the intersection of the cone of nonnegative polynomials of a given degree with an affine subspace 
is fundamental to optimization and algebraic geometry, with broad applications in polynomial optimization~\cite{lasserre2001global}, control systems and robotics~\cite{henrion2005positive,tedrake2010lqr}, probability theory~\cite{bertsimas2005optimal}, and statistics~\cite{curmei2023shape}, to name a few; see the textbooks~\cite{lasserre2009moments,blekherman2012semidefinite,lasserre2015introduction}, or survey papers~\cite{laurent2009sums,hall2020applications} for many other applications and further context.  

A polynomial $p:\mathbb{R}^n\rightarrow\mathbb{R}$ is said to be a \emph{sum of squares} (sos) if it can be expressed as
$p(x)=\sum_i q_i^2(x)$ for some polynomials $q_i:\mathbb{R}^n\rightarrow\mathbb{R}$. 
A sum of squares decomposition of a polynomial provides a single algebraic identity that certifies its nonnegativity.
Moreover, as is well known by now, such a decomposition (assuming it exists) can be obtained by solving a semidefinite program (SDP)~\cite{Parrilo2000,lasserre2001global,parrilo2003semidefinite}.


Throughout the paper, we denote the set of nonnegative polynomials and sos polynomials in $n$ variables and degree at most $d$ by $\NN_{n,d}$ and $\SOS_{n,d}$, respectively. 
It has been known since the seminal work of Hilbert~\cite{Hilbert1888} that not every nonnegative polynomial is a sum of squares. In fact, Hilbert established that $\NN_{n,d}$ and $\SOS_{n,d}$ coincide only in three specific cases: when $n=1$, $d=2$, or $(n,d)=(2,4)$. The first example of a nonnegative polynomial which is not sos was constructed by Motzkin many years later~\cite{Motzkin1967}:
\begin{equation}\label{eq:motzkin_nonhomo}
M(x_1,x_2)=x_1^4x_2^2+x_1^2x_2^4-3x_1^2x_2^2+1.
\end{equation}


Despite the existence of such examples, it turns out that a proof of nonnegativity based on a single algebraic identity is always possible though at the expense of increasing the degree of the sos polynomials that appear in the identity. More specifically, it follows from Artin's solution to Hilbert's 17th problem~\cite{Artin1927} that for every nonnegative polynomial $p$, there exists a nonzero sos polynomial $q$, such that the product $pq$ is sos (and hence nonnegativity of $p$ is algebraically certified by a representation as a ratio of two sos polynomials). For example, for the Motzkin polynomial, we have the following degree-8 identity:
\begin{equation}\label{eq:Motzkin.with.multiplier}
(x_1^2+x_2^2)M(x_1,x_2)=x_1^2(1-x_2^2)^2+x_2^2(1-x_1^2)^2+x_1^2x_2^2(x_1^2+x_2^2-2)^2.    
\end{equation}
By utilizing Artin's theorem, one can design a complete SDP-based hierarchy for proving nonnegativity of a polynomial $p$. In level $j$ of this hierarchy, one searches for a nonzero sos polynomial~$q$ of degree~$2j$ that makes the product $pq$ also sos.\footnote{Unlike the approach we later propose in this paper, one cannot use this SDP hierarchy to \emph{impose} a nonnegativity constraint on a polynomial $p$. This is because of the bilinear matrix inequality that arises from the sos constraint on the product of the two unknown polynomials $p$ and $q$.}
However, the degree of the polynomial $pq$ can be much higher than the degree of~$p$ (see, e.g.,~\cite{sos_on_hypercube}) and is upper bounded currently only by a tower of five exponentials in the number of variables and degree of $p$~\cite{lombardi2020elementary}. Since the size of the SDP underlying an sos 
constraint grows rapidly with the degree of the 
sos polynomial, 
the SDP hierarchy based on Artin's theorem can quickly become impractical in certain applications. This challenge also arises in the context of other sum of squares based SDP hierarchies (e.g., in~\cite{Parrilo2000,lasserre2001global,parrilo2003semidefinite}) which rely on related seminal theorems of algebraic geometry such as the
Positivstellens\"atze of Stengle~\cite{stengle1974nullstellensatz}, Schm\"udgen~\cite{Schmudgen1991}, or Putinar~\cite{putinar1993positive}.

With this context in mind, the central question that motivates our research is the following: \emph{``Can one lower the complexity of a sum of squares based proof of polynomial nonnegativity by constructing several algebraic identities instead of a single one?''} Our primary notion of complexity here is the degree of the sos polynomials that appear in the proof of nonnegativity. However, our results also simplify the structure of these sos polynomials making the search for them amenable to cheaper classes of conic programs, such as linear programs. As an example, consider the following proof of nonnegativity of the Motzkin polynomial using two degree-6 algebraic identities:
\begin{equation}\label{eq:Motzkin.disos.proof}
\begin{aligned}
M(x_1,x_2) &= (1-x_1x_2)^2 + (x_1^2x_2-x_1x_2^2)^2 + 2x_1x_2(1-x_1x_2)^2\\
    & = (1+x_1x_2)^2 + (x_1^2x_2+x_1x_2^2)^2 - 2x_1x_2(1+x_1x_2)^2. 
    \end{aligned}    
\end{equation}
Indeed, for every point $(x_1,x_2)\in\reals^2$, either $x_1x_2\ge 0$ or $x_1x_2<0$. When $x_1x_2\ge 0$, nonnegativity of $M(x_1,x_2)$ follows from the first identity. Similarly, when $x_1x_2<0$ or equivalently $-x_1x_2>0$, nonnegativity of $M(x_1,x_2)$ follows from the second identity. We call a proof of this type a \emph{disjunctive sum of squares} proof of nonnegativity. Comparing the two algebraic proofs in (\ref{eq:Motzkin.with.multiplier}) and (\ref{eq:Motzkin.disos.proof}), we see that the former involves an sos polynomial of degree 8 while the latter involves two sos polynomials of degree 6, which is the same as the degree of the Motzkin polynomial. From a computational standpoint, this means that the size of the semidefinite constraint underlying the second proof is smaller.\footnote{We recall that, imposing an sos constraint on a polynomial in $n$ variables and degree $2d$ results in a positive semidefinite constraint on a matrix of size ${{n+d}\choose{d}}\times {{n+d}\choose{d}}$ (see, e.g.,~\cite{Parrilo2000, parrilo2003semidefinite}).}
Moreover, the sos polynomials appearing in (\ref{eq:Motzkin.disos.proof}) have a special structure (known as ``diagonal dominance''; see~\cite{aaa2018dsos}), that enables the search for this particular disjunctive sum of squares proof to be carried out by solving a linear program. This is, in general, much cheaper than solving a semidefinite program.


Our main contribution in this paper is to formalize the concept of disjunctive sum of squares proofs of nonnegativity and show that \emph{low-degree} proofs of this type always exist under standard assumptions. We outline our contributions in some more detail next.



\subsection{Organization and Main Contributions}
The remainder of this paper is organized as follows. In Section~\ref{sec:disos}, we define the notion of a disjunctive sum of squares proof of nonnegativity of a polynomial. 
We also show that a natural alternating maximization approach based on SDP can find {low-degree disjunctive sum of squares proofs of nonnegativity of several classical non-sos polynomials}.

In Section~\ref{sec:disos_psatz}, we prove a ``disjunctive Positivstellensatz'', which shows that the degree of the sos polynomials appearing in a disjunctive sum of squares proof can be bounded above by the degree of the polynomial whose nonnegativity is being certified. Sections~\ref{sec:local_psatz} and \ref{sec:global_psatz} are devoted to the proof of this statement. Based on this result, Section~\ref{sec:disos_grid} introduces an SDP-based hierarchy, with semidefinite constraints of \emph{fixed size}, which yields 
a converging sequence of 
lower bounds on the minimum value of a homogeneous polynomial over the unit sphere. Our analysis also yields a convergence rate for this hierarchy.

In Section~\ref{sec:disos_new}, we prove a second disjunctive Positivstellensatz, which shows that a proof of nonnegativity of a polynomial can be given by simply checking nonnegativity of the coefficients of the same polynomial under several linear changes of coordinates. These linear transformations are explicit and derived from disjunctions based on simplicial partitions of the Euclidean space. An upper bound on the number of linear transformations is provided.



In Section~\ref{sec:constrained_opt}, we demonstrate how our disjunctive sum of squares approach can be applied to constrained polynomial optimization problems. 
In Section~\ref{sec:hierarchy}, 
using a reduction from~\cite{aaa2019}, we construct a semidefinite programming based converging hierarchy of lower bounds for the general problem of minimizing a polynomial over a compact basic semialgebraic set, where the size of the largest semidefinite constraint is fixed throughout the hierarchy.
In Section~\ref{sec:copositive}, 
we focus on the case where the constraint set is the unit simplex, and we provide a specialized disjunctive Positivstellensatz for certifying copositivity of matrices.



In Section~\ref{sec:bnb}, we show how one can more practically search for disjunctive sum of squares proofs by dynamically partitioning the Euclidean space through a 
spatial branch-and-bound framework. As examples, 
%
we propose branch-and-bound algorithms for minimizing a homogeneous polynomial over the unit sphere and for certifying copositivity of matrices. In Section~\ref{sec:num}, we present some numerical experiments. 
Finally, Section~\ref{sec:discussion} concludes our paper and highlights potential future directions.

\section{SOS with Respect to an Algebraic Disjunction}\label{sec:disos}
In this section, we introduce the concept of sum of squares with respect to an algebraic disjunction, which is central to our framework. 
Throughout the paper, we denote the degree of a polynomial $p$ by $\deg p$.

\begin{definition}[Algebraic Disjunction]\label{def:disjunction}
Let $r,n_1,\ldots,n_r$ be positive integers. We say that a collection of polynomials $\mcD=\{\{q_{k,j}\}_{j=1,\ldots, n_k},\quad k=1,\ldots,r\}$ forms an \emph{algebraic disjunction} of $\reals^n$ if 
\[\mathbb{R}^n=\bigcup\limits_{k=1}^r\Omega_{k},\ \text{where }\Omega_k=\{x\in\mathbb{R}^n\mid q_{k,j}(x)\ge 0,\quad j=1,\ldots,n_k\}.\]
\end{definition}
\begin{definition}\label{def:disos}
Let $p$ be a polynomial in $n$ variables and degree at most $d$, $\bar{d}\ge d$ be an integer, and $\mcD=\{\{q_{k,j}\}_{j=1,\ldots, n_k},\quad k=1,\ldots,r\}$ be an algebraic disjunction of $\reals^n$.
We say that $p$ is \emph{sos of order $\bar{d}$ with respect to }$\mcD$ \emph{(or $p$ is sos$^{\bar{d}}_\mathcal{D}$)} if there exist sos polynomials $s_{k,j}$ for $k=1,\ldots, r$ and $j=0,1,\ldots,n_k$, such that 
\begin{equation}\label{def:sos_ad}
    p(x)=s_{k,0}(x)+\sum_{j=1}^{n_k} s_{k,j}(x)q_{k,j}(x),\quad  k=1,\ldots, r,
\end{equation}
where $\deg s_{k,0}\le \bar{d}$ and $\deg(s_{k,j}q_{k,j})\le \bar{d}$ for $k=1,\ldots,r$ and  $j=1,\ldots,n_k$. 
We denote the set of sos$^{\bar{d}}_\mcD$ polynomials in $n$ variables and degree at most $d$ as $\DiSOS_{n,d}^{\bar{d}}(\mcD)$.
\end{definition}

Observe that for any algebraic disjunction $\mcD$ and integer $\bar{d}\ge d$, the set $\DiSOS^{\bar{d}}_{n,d}(\mcD)$ is a convex cone and lies between $\SOS_{n,d}$ and $\NN_{n,d}$. 
Indeed, by setting $s_{k,j}=0$ for $k=1,\ldots,r$ and $j=1,\ldots,n_k$, we note that $\SOS_{n,d}\subseteq\DiSOS^{\bar{d}}_{n,d}(\mcD)$. This inclusion can be strict even when $\bar{d}=d$; e.g., the Motzkin polynomial in~\eqref{eq:motzkin_nonhomo} belongs to $\DiSOS_{2,6}^{6}(\{ \{+x_1x_2\},\{-x_1x_2\}\})$, as shown by the identities in~\eqref{eq:Motzkin.disos.proof}. 
To see that $\DiSOS^{\bar{d}}_{n,d}(\mcD)\subseteq \NN_{n,d}$, note that for any point $x\in\Omega_k$, we have $q_{k,j}(x)\ge 0$ for all $j=1,\ldots,n_k$, and therefore, in view of~\eqref{def:sos_ad},~$p(x)\ge 0$. 
Consequently, $p$ is nonnegative over each subregion $\Omega_k$, and thus nonnegative over~$\reals^n$. We call the $r$ algebraic identities in~\eqref{def:sos_ad} a disjunctive sos proof of nonnegativity of~$p$. 

From a computational standpoint, for any fixed $\bar{d}$, linear optimization over the intersection of an affine subspace with the cone $\DiSOS_{n,d}^{\bar{d}}(\mcD)$ can be carried out by an SDP of size polynomial in $n$. We also note that for implementation purposes, there is no need to include $s_{k,0}$ as a decision variable; instead, one can impose the constraint that~${p(x)-\sum_{j=1}^{n_k} s_{k,j}(x)q_{k,j}(x)\in\SOS_{n,\bar{d}}}$ for $k=1,\ldots,r$. 


Unlike the well-known hierarchies for proving nonnegativity of a polynomial (e.g., based on the Positivstellens\"atze of Artin~\cite{Artin1927}, Reznick~\cite{reznick1995uniform}, Stengle~\cite{stengle1974nullstellensatz, Parrilo2000}, or Putinar~\cite{putinar1993positive, lasserre2001global}) that increase the degrees of the sos polynomials involved in the proof at each level of the hierarchy, our approach keeps the degrees of these sos polynomials fixed (and low). 
We instead increase the number of subsets in the algebraic disjunction $\mcD$ (see Theorem~\ref{cor:strictly_pos} and Theorem~\ref{cor:strictly_pos_new}). 

A natural question is how to search for disjunctive proofs of nonnegativity of a polynomial~$p$; i.e., for the identities in~\eqref{def:sos_ad}. Note that as written, the joint search for the polynomials $q_{k,j}$ that define the algebraic disjunction $\mathcal{D}$ and the sos multipliers $s_{k,j}$ cannot be formulated as a convex optimization problem. An obvious workaround to this issue is to alternate the search for these decision variables. The details of this approach are described in the rest of this section together with some examples. In Section~\ref{sec:disos_psatz} and Section~\ref{sec:disos_new}, we present a more theoretically sound approach to finding disjunctive proofs of nonnegativity. We show that by appropriately fixing a family of algebraic disjunctions, we can always construct low-degree proofs of nonnegativity using convex optimization.





\paragraph{An alternating maximization algorithm.}



In the joint search for the polynomials~$q_{k,j}$ and the sos multipliers~$s_{k,j}$ in~\eqref{def:sos_ad}, a challenge is to ensure that the polynomials~$q_{k,j}$ form a valid algebraic disjunction (see Definition~\ref{def:disjunction}). 
A simple way to guarantee this is to take $\ell$ polynomials $h_1,\ldots, h_{\ell}$, let $r=2^{\ell}$, $\varepsilon \in \{-1,1\}^{r \times \ell}$ with all rows of $\varepsilon$ distinct, and let 
\[q_{k,j}=\varepsilon_{k,j}h_j,\quad k=1,\ldots,r, \quad j=1,\ldots,\ell.\]
With this construction, it is easy to see that the union of the corresponding $r$ subregions $\Omega_1,\ldots,\Omega_r$ as defined in Definition~\ref{def:disjunction} is $\reals^n$.
Building on this, given a polynomial $p$ of degree $d$ and an integer $\bar{d}\ge d$, the problem boils down to finding polynomials~$h_j$ and sos multipliers~$s_{k,j}$ such that
\begin{equation}\label{eq:am_joint_feas}
		p(x) =s_{k,0}(x)+\sum\limits_{j=1}^{\ell} \varepsilon_{k,j}h_j(x)s_{k,j}(x),\quad  k=1,\ldots,r,    
\end{equation}
where $\deg s_{k,0}\le \bar{d}$, 
$\deg(h_js_{k,j})\le \bar{d}$ for $k=1,\ldots,r$ and $j=1,\ldots,\ell$. 
To approach this problem using convex optimization, we alternate between the search for the polynomials~$h_j$ and the sos multipliers~$s_{k,j}$. 
To track progress across iterations, we always maximize a lower bound on~$p$ that can be certified with identities of the form~\eqref{eq:am_joint_feas}. 
More concretely, we alternate between the following two steps, which can both be reformulated as a semidefinite program:
\begin{enumerate}
    \item 
    Given polynomials~$h_j^*$ for $j=1,\ldots,\ell$, solve the SDP
    \begin{equation*}
	\begin{array}{cl}
\max\limits_{\gamma,s_{k,j}} & \gamma\\
		\st & p(x)-\gamma =s_{k,0}(x)+\sum\limits_{j=1}^{\ell} \varepsilon_{k,j}h_j^*(x)s_{k,j}(x),\quad  k=1,\ldots,r\\
         & \deg s_{k,0}\le \bar{d},\ 
\deg(h_j^*s_{k,j})\le \bar{d},\quad  k=1,\ldots,r,\ j = 1,\ldots, \ell,\\
& s_{k,j} \text{ is sos},\quad  k=1,\ldots,r,\ j = 0,\ldots, \ell.
	\end{array}
\end{equation*}
    \item Given sos polynomials~$s_{k,j}^*$ for $k=1,\ldots,r$ and $j=1,\ldots,\ell$, solve the SDP\footnote{{In fact, both Step~1 and Step~2 can be decomposed into $r$ independent SDPs and solved in parallel; see Remark~\ref{remark:uniform_grid} in Section~\ref{sec:disos_grid}.}} 
     \begin{equation*}
	\begin{array}{cl}
\max\limits_{\gamma,h_j,s_{k,0}} & \gamma\\
		\st & p(x)-\gamma =s_{k,0}(x)+\sum\limits_{j=1}^{\ell} \varepsilon_{k,j}h_j(x)s_{k,j}^*(x),\quad  k=1,\ldots,r\\
         & \deg s_{k,0}\le \bar{d},\ \deg(h_js_{k,j}^*)\le \bar{d},\quad  k=1,\ldots,r,\ j = 1,\ldots, \ell,\\
& s_{k,0} \text{ is sos},\quad  k=1,\ldots,r.
	\end{array}
\end{equation*}   
\end{enumerate}
To run Step 1, we let the polynomials $h_j^*$ for ${j}=1,\ldots, \ell$ be arbitrary (e.g., chosen at random) if we are in the first iteration, or an optimal solution to Step 2 if we are in subsequent odd iterations. Similarly, to run Step 2, we let $s_{k,j}^*$ for $k=1,\ldots,r$ and $j=1,\ldots,\ell$ be an optimal solution to Step 1.



Applying the alternating maximization algorithm to the Motzkin polynomial in~\eqref{eq:motzkin_nonhomo} yields two additional disjunctive sos proofs of nonnegativity. In this example, we set $\ell=1$ and $\bar{d}=6$. To initialize the algorithm, we generate $h_1$ randomly, once of degree 1 and once of degree 4. The alternating maximization algorithm converges in fewer than 20 iterations and produces (after some minor post-processing to polish the coefficients to rational numbers) the following two algebraic identities, respectively: 
\begin{equation}\label{eq:Motzkin.disos.proof2}
 \begin{aligned}
    M(x_1,x_2) &= (1-x_1x_2^2)^2 + (x_1^2x_2-x_1x_2)^2 + 2x_1(x_1x_2-x_2)^2\\
    & = (1+x_1x_2^2)^2 + (x_1^2x_2+x_1x_2)^2 - 2x_1(x_1x_2+x_2)^2,\\
\end{aligned} 
\end{equation}
\begin{equation}\label{eq:Motzkin.disos.proof3}
 \begin{aligned}
M(x_1,x_2) &= \frac{1}{2}(x_1^2x_2+x_2^3-2x_2)^2 + (x_2^2-1)^2  +\frac{1}{2}(x_1^4-x_2^4-2x_1^2+2x_2^2)x_2^2\\
  &=\frac{1}{2}(x_1^3+x_1x_2^2-2x_1)^2 + (x_1^2-1)^2 - \frac{1}{2}(x_1^4-x_2^4-2x_1^2+2x_2^2)x_1^2.
\end{aligned} 
\end{equation}
Figure~\ref{fig:Motzkin} demonstrates the subregions associated with the three degree-6 disjunctive sos proofs of nonnegativity of the Motzkin polynomial in~\eqref{eq:Motzkin.disos.proof},~\eqref{eq:Motzkin.disos.proof2}, and~\eqref{eq:Motzkin.disos.proof3}.


Similarly, we are able to find disjunctive proofs of nonnegativity of several polynomials that are known to \emph{not} be a sum of squares. We present three such certificates below.
\begin{itemize}[noitemsep]
\item Choi-Lam-1~\cite{ChoiLam1976}. Let $ CL_1(x_{1},x_{2},x_{3},x_{4}) = -4x_{1} x_{2} x_{3} x_{4} + x_{1}^{2} x_{2}^{2} + x_{1}^{2} x_{3}^{2} + x_{2}^{2} x_{3}^{2} + x_{4}^{4}$.
Then $CL_1 = s_1 + h\,s_2 = s_3 - h\,s_4$, with $h = x_{1} x_{2}$,
\[
  \begin{array}{rcl@{\qquad}rcl}
    s_1 & = & (x_{1} x_{3} - x_{2} x_{3})^2 +\, (x_{1} x_{2} - x_{4}^{2})^2, & s_2 & = & 2\,(-x_{3} + x_{4})^2, \\
    s_3 & = & (x_{1} x_{3} + x_{2} x_{3})^2 +\, (x_{1} x_{2} + x_{4}^{2})^2, & s_4 & = & 2\,(x_{3} + x_{4})^2.
  \end{array}
\]
\item Choi-Lam-2~\cite{ChoiLam1977}.
Let $CL_2(x_{1},x_{2},x_{3}) = -3x_{1}^{2} x_{2}^{2} x_{3}^{2} + x_{1}^{2} x_{3}^{4} + x_{1}^{4} x_{2}^{2} + x_{2}^{4} x_{3}^{2}$. 
Then $CL_2~=~ s_1 + h\,s_2 = s_3 - h\,s_4$, with $h = x_{1} x_{2}$,
\[
  \begin{array}{rcl@{\qquad}rcl}
    s_1 & = & (-x_{1} x_{3}^{2} + x_{1}^{2} x_{2})^2 +\, (-x_{1} x_{2} x_{3} + x_{2}^{2} x_{3})^2, & s_2 & = & 2\,(-x_{1} x_{3} + x_{2} x_{3})^2, \\
    s_3 & = & (x_{1} x_{3}^{2} + x_{1}^{2} x_{2})^2 +\, (x_{1} x_{2} x_{3} + x_{2}^{2} x_{3})^2, & s_4 & = & 2\,(x_{1} x_{3} + x_{2} x_{3})^2.
  \end{array}
\]
\item Stengle-1~\cite{Stengle1979}. Let $ S_1(x_{1},x_{2},x_{3}) = -2x_{1} x_{2}^{2} x_{3}^{3} + x_{1}^{2} x_{3}^{4} - 2x_{1}^{3} x_{2}^{2} x_{3} + x_{1}^{3} x_{3}^{3} + 2x_{1}^{4} x_{3}^{2} + x_{1}^{6} + x_{2}^{4} x_{3}^{2}$.
Then $S_1 = s_1 + h\,s_2 = s_3 - h\,s_4$, with $h = x_{1} x_{3}$,
\[
  \begin{array}{rcl@{\qquad}rcl}
    s_1 & = & (-x_{1} x_{3}^{2} - x_{1}^{3} + x_{2}^{2} x_{3})^2, & s_2 & = & (x_{1} x_{3})^2, \\
    s_3 & = & \multicolumn{4}{l}{(x_{1} x_{3}^{2} + \tfrac{1}{2}\,x_{1}^{2} x_{3} + \tfrac{7}{10}\,x_{1}^{3} - \tfrac{6}{7}\,x_{2}^{2} x_{3})^2 +\, \tfrac{1}{14}\,(x_{1} x_{2} x_{3})^2} \\
     &  & \multicolumn{4}{l}{+\, \tfrac{7}{20}\,(x_{1}^{2} x_{3} - x_{1}^{3} + \tfrac{5}{7}\,x_{2}^{2} x_{3})^2 +\, \tfrac{4}{25}\,(x_{1}^{3} - \tfrac{5}{7}\,x_{2}^{2} x_{3})^2 +\, \tfrac{1}{196}\,(x_{2}^{2} x_{3})^2,} \\
    s_4 & = & \tfrac{2}{7}\,(-\tfrac{1}{2}\,x_{1} x_{2} + x_{2} x_{3})^2. & & &
  \end{array}
\]
\end{itemize}

\begin{figure}[ht]
\centering
\includegraphics[width=0.33\textwidth]{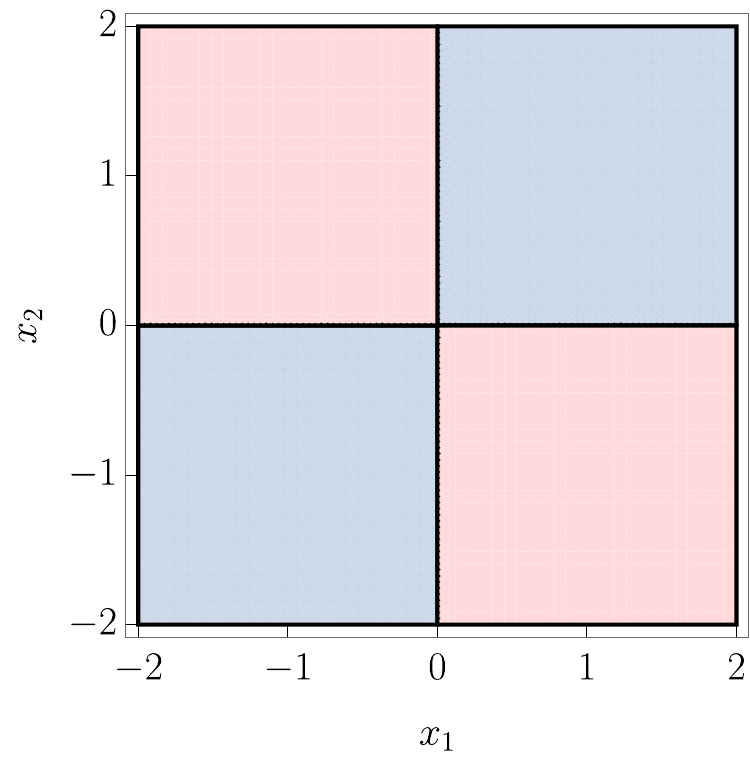}\hfill
\includegraphics[width=0.33\textwidth]{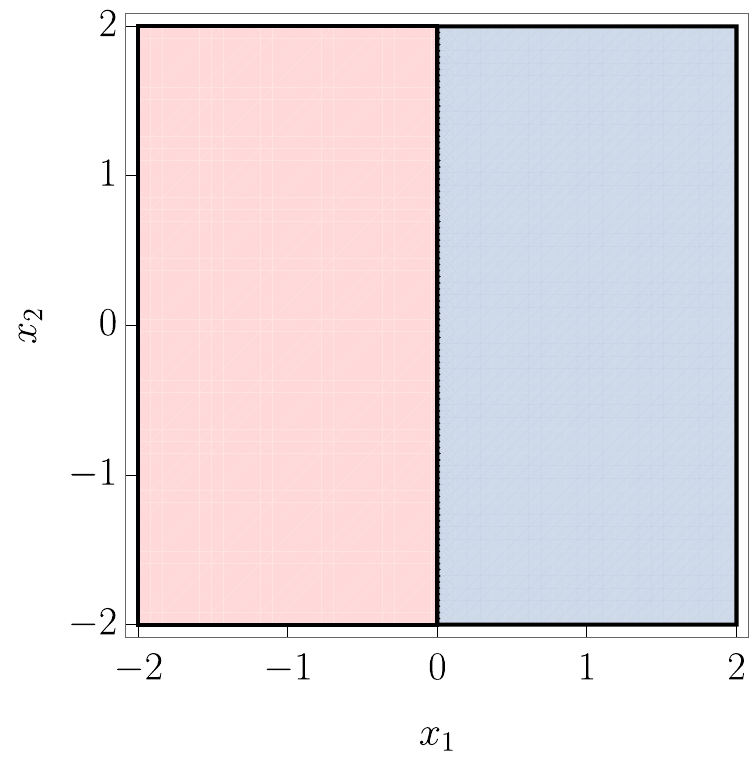}
\includegraphics[width=0.33\textwidth]{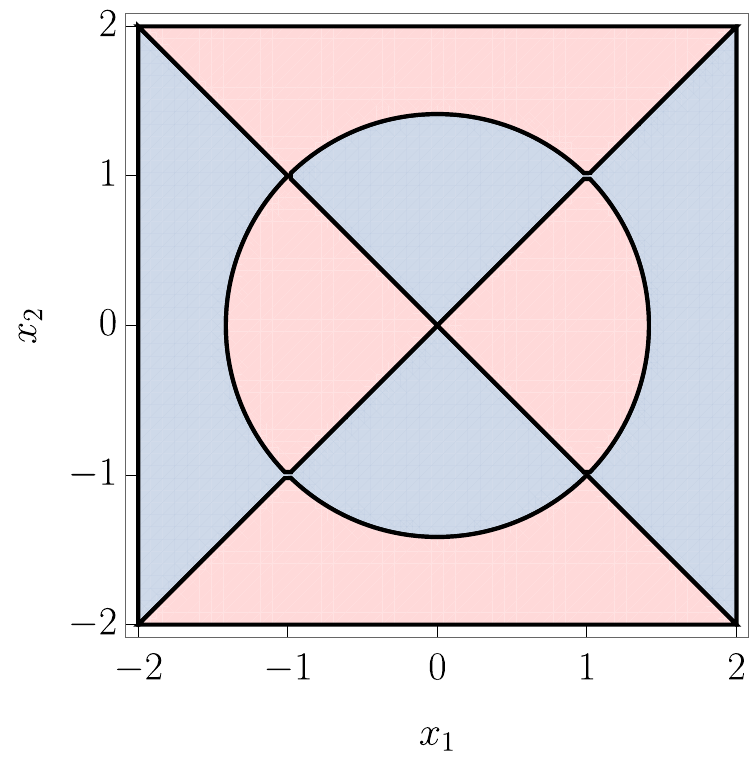}
    \caption{Three different partitions of $\reals^2$ corresponding to three different degree-6 disjunctive sos proofs of nonnegativity of the Motzkin polynomial (see~\eqref{eq:motzkin_nonhomo}). Left: subregions correspond to the identity in~\eqref{eq:Motzkin.disos.proof} and are defined by $x_1x_2\ge 0$ and $-x_1x_2 \ge 0$. 
    Middle: subregions correspond to the identity in~\eqref{eq:Motzkin.disos.proof2} and are defined by $x_1\ge 0$ and $-x_1\ge 0$. 
    Right: subregions correspond to the identity in~\eqref{eq:Motzkin.disos.proof3} and are defined by $x_1^4-x_2^4-2x_1^2+2x_2^2 \ge 0$ and $-(x_1^4-x_2^4-2x_1^2+2x_2^2) \ge 0$.
    }
\label{fig:Motzkin}
\end{figure}

\section{A Disjunctive Positivstellensatz}
\label{sec:disos_psatz}

In this section, we introduce our main result in Theorem~\ref{cor:strictly_pos} which shows that one can a priori fix a family of low-degree algebraic disjunctions with respect to which there always exists a low-degree disjunctive sos proof of nonnegativity.

Let us establish some basic notation related to \emph{forms}, i.e., homogeneous polynomials, which will be useful in the remainder of this paper.
We denote the set of forms, nonnegative forms, and sum of squares forms in~$n$ variables and degree~$d$ by $H_{n,d},\HNN_{n,d},$ and $\HSOS_{n,d}$ respectively. 
Observe that~$\HNN_{n,d}=\NN_{n,d}\cap H_{n,d}$ and $\HSOS_{n,d}=\SOS_{n,d}\cap H_{n,d}$. 
Additionally, we say that a form~$p$ in~$n$ variables is \emph{positive definite} if $p(x)>0$ for all $x\in\reals^n$ and $x\neq0$.
We are now ready to present our main result.
\begin{theorem}[Disjunctive Positivstellensatz]\label{cor:strictly_pos}
For any fixed dimension~$n$ and even degree~$d$, there exists an explicit family of algebraic disjunctions
$\mcD^m$, indexed by $m$, such that for any positive
 definite form $p\in H_{n,d}$, we have $p\in \DiSOS_{n,d}^{d}(\mcD^m)$ for some~$m$. Furthermore, each subset in $\mcD^m$ contains exactly one form of degree at most~$d$.\footnote{Our proof in fact provides several constructions: the forms in the algebraic disjunction may be chosen to have any even degree between~2 and~$d$.}
\end{theorem}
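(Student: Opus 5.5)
Our plan is to localize: cover the unit sphere $S^{n-1}$ by finitely many small caps, and on each cap certify $p$ with a low-degree identity using a single quadratic (or degree-$2e$, $2\le 2e\le d$) form that is nonnegative exactly on the cone over the cap. Concretely, for a grid point $a\in S^{n-1}$ and a parameter $\delta\in(0,1)$, we use the quadratic form
\[
q_{a,\delta}(x)=(a^\top x)^2-(1-\delta)\|x\|^2 ,
\]
whose nonnegativity set is the double cone $\{x:\ |a^\top x|\ge\sqrt{1-\delta}\,\|x\|\}$, i.e.\ a cap around $\pm a$. For $\mcD^m$ we take a sequence of finite sets $A_m\subset S^{n-1}$ with covering radius tending to zero, together with a matching $\delta_m\to0$ chosen so that these double cones cover $\reals^n$; each subset of the disjunction is then the single form $q_{a,\delta_m}$, $a\in A_m$. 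For even degree $2e$ one can instead use $q_{a,\delta}^{(e)}=(a^\top x)^{2e}-(1-\delta)^e\|x\|^{2e}$, which has the same nonnegativity set.

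The key local step (the ``local Positivstellensatz'') is to show: if $p$ is positive definite of degree $d$, then there is $\delta_0>0$ such that for every $a\in S^{n-1}$ and every $\delta\le\delta_0$ there exist sos forms $s_0,s_1$ with $p=s_0+s_1 q_{a,\delta}$, $\deg s_0=d$, $\deg s_1=d-2$. We would first do this at $\delta=0$ in a rotated frame where $a=e_1$: write $x=(x_1,y)$ and expand $p=\sum_{i} x_1^{d-i}p_i(y)$. Since $p(e_1)>0$, the idea is to exploit that $\|x\|^2-x_1^2=\|y\|^2$ and $q=(1-(1-\delta))x_1^2\cdot\ldots$; more usefully, $-q_{a,\delta}=\|y\|^2-\delta\|x\|^2$, so on the cap $\|y\|^2$ is small relative to $x_1^2$. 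We aim to write $p=c\,x_1^{d}+\text{(terms with at least one factor } y)$ and absorb every term with $y$-degree $\ge2$ by $s_1\cdot q$ with $s_1$ a suitable multiple of $\|x\|^{d-2}$ plus a Gram-matrix correction, while the terms linear in $y$, $x_1^{d-1}\ell(y)$, are absorbed by completing the square with $x_1^{d}$: $c x_1^d + x_1^{d-1}\ell(y)+ C x_1^{d-2}\|y\|^2$ is sos (as $x_1^{d-2}$ times a PSD quadratic) once $C$ is large. The large multiple $C x_1^{d-2}\|y\|^2$ and the higher-order $y$ terms are then paid for by adding $M\|x\|^{d-2}\,q_{a,\delta}$: indeed $p- M\|x\|^{d-2}q = p + M\|x\|^{d-2}(\|y\|^2-\delta\|x\|^2)$, and for $M$ large and $\delta M$ small (i.e.\ $\delta\le\delta_0\sim 1/M$) the right side is a form that is positive definite with a strictly positive-definite Gram matrix in the monomial basis, hence sos. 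This is the Schm\"udgen/Putinar-type ``enough penalty makes it sos'' argument, but at fixed degree: we are adding an sos-interior element $M\|x\|^{d-2}\|y\|^2$ whose Gram matrix dominates every block involving $y$, leaving only the $x_1^d$ block, which has coefficient $p(e_1)-M\delta>0$.

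The main obstacle is making this quantitative and uniform: the Gram-matrix argument must produce one $M$ (hence one $\delta_0$) working for all $a\in S^{n-1}$, which follows by compactness of the sphere and continuity of the coefficients of $p\circ R_a$ in the rotation, provided we check strict positivity of the relevant Schur complement using $\min_{S^{n-1}}p>0$; the subtle point is that $\|x\|^{d-2}\|y\|^2$ alone does not dominate monomials like $x_1^{d-1}y_j$, which is exactly why the cross term is handled by the Schur complement with the $x_1^{d}$ entry. Once the local lemma holds, the global assembly is routine: choose $m$ with $\delta_m\le\delta_0$ and covering guaranteed, and the identities $p=s_{a,0}+s_{a,1}q_{a,\delta_m}$, $a\in A_m$, witness $p\in\DiSOS^{d}_{n,d}(\mcD^m)$, each subset containing one form of degree $2\le d$ (or $2e\le d$ in the variant, with $\deg s_{a,1}=d-2e$).
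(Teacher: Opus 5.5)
Your proposal is correct and is essentially the paper's proof in the case $d'=2$. Your $q_{a,\delta}$ is a positive multiple of the paper's $(x^Tx^i)^2-t_mh_2(x;x^i)=(1+t_m)(x^Tx^i)^2-t_m\|x\|_2^2$ with $\delta=1/(1+t_m)$. Your local step matches Lemma~\ref{thm:degd_nonhomo} and Theorem~\ref{thm:degd_homo}: a penalty whose Gram matrix diagonally dominates all $y$-containing monomials, completing the square against the $x_1^d$ entry for the terms linear in $y$, and uniformity over the sphere via the orthogonal group (the paper's $T(p;x^*)$). The only difference is cosmetic: you use the multiplier $M\|x\|_2^{d-2}$ directly, whereas the paper proves the $d'=d$ certificate with a constant multiplier and converts it via Lemma~\ref{thm:degd_nonhomo_d'}, whose multiplier is $(x^Tx^*)^{d-d'}+h_{d-d'}(x;x^*)$.
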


We prove Theorem~\ref{cor:strictly_pos} as follows.
In Section~\ref{sec:local_psatz}, we introduce local Positivstellens\"atze, which provide a single algebraic identity proving nonnegativity of a polynomial in the neighborhood of a given point where it is known to be positive.
In Section~\ref{sec:global_psatz}, we combine these neighborhoods to cover the entire Euclidean space and use the corresponding algebraic identities to construct a disjunctive sos proof of (global) nonnegativity.
In Section~\ref{sec:disos_grid}, we show how this theorem leads to a hierarchy of SDPs---with semidefinite constraints of fixed size---that produces arbitrarily tight lower bounds on the minimum value of a homogeneous polynomial over the unit sphere.

\subsection{Local Positivstellens\"atze}\label{sec:local_psatz}
For a polynomial~$p$ in~$n$ variables and degree~$d$, we denote its homogenization by
$$p_h(x,x_{n+1})\defeq x_{n+1}^dp(x/x_{n+1}).$$
It is straightforward to check that homogenization preserves both nonnegativity and the sos property; that is, $p\in\NN_{n,d}$ (resp., $p\in\SOS_{n,d}$) if and only if $p_h\in\HNN_{n+1,d}$ (resp., $p_h\in\HSOS_{n+1,d}$). Hence, given a general (not necessarily homogeneous) polynomial~$p$, we can construct a disjunctive sos proof of nonnegativity of~$p$ by first obtaining one for $p_h$ and then setting the last variable to 1 in the identities. This justifies our focus on homogeneous polynomials. Moreover, since odd-degree polynomials cannot be nonnegative on the entire Euclidean space, it suffices to consider even-degree forms. 

We now turn to establishing local certificates of nonnegativity. Suppose a polynomial~$p$ takes {a} positive value at some point $x^*\in\mathbb{R}^n$. By continuity, $p$ remains nonnegative within a sufficiently small neighborhood of $x^*$. This observation motivates the development of a local Positivstellensatz: an algebraic identity demonstrating that if the angle between a point $x$ and $x^*$ is sufficiently small, then $p(x)\ge 0$. 

We introduce some notation for the theorems that follow. We denote the unit sphere in~$\reals^n$ by~$\mbS^{n-1}$, and the $n\times n$ identity matrix by $I_n$. 
For a polynomial~$p$, we denote its largest coefficient in absolute value (resp.\ the sum of the absolute values of its coefficients) in the standard monomial basis by $\|p\|_{\infty}$ (resp.\ by $\|p\|_1$).



\begin{theorem}\label{thm:degd_homo}
For a polynomial $p\in H_{n,d}$ with $d$ even, if $p(x^*)>0$ for some $x^*\in \mbS^{n-1}$, then for $t>0$ sufficiently large, $p(x)$ can be written as
\begin{equation}\label{eq:degd_homo}
    p(x)=\left(\frac{p(x^*)}{2}(x^Tx^*)^d-th_d(x;x^*)\right)+\sigma(x),
\end{equation}
where $\sigma(x)\in\HSOS_{n,d}$ and 
\begin{equation}\label{eq:degd_homo_h}
h_d(x;x^*)\defeq\sum_{k=1}^{d/2}\left(\|x\|_2^2-(x^Tx^*)^2\right)^k(x^Tx^*)^{d-2k}.
\end{equation}
%
In particular, the above statement holds for any
\begin{equation}\label{eq:degd_homo_t}
t\ge T(p;x^*)\defeq\sup_{U\in\reals^{n\times n}:U^TU=I_n}\left( \frac{n}{2p(x^*)}\|p(Ux)\|_{\infty}^2 + \|p(Ux)\|_1\right).  
\end{equation}
\end{theorem}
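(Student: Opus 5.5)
Because orthogonal changes of variables preserve both $H_{n,d}$ and $\HSOS_{n,d}$, I would first reduce to $x^*=e_1$. Choose an orthogonal $U$ with $Ue_1=x^*$ and set $x=Uy$ and $\tilde p(y)\defeq p(Uy)$. Then $x^Tx^*=y_1$ and $\|x\|_2^2-(x^Tx^*)^2=y_2^2+\cdots+y_n^2\eqqcolon\rho(z)$, where $z=(y_2,\ldots,y_n)$. Hence
\[
h_d=\sum_{k=1}^{d/2}\rho(z)^k\,y_1^{d-2k}.
\]
It therefore suffices to show that $\tilde p(y)-\frac{\tilde p(e_1)}{2}y_1^d+t\,h_d$ is sos whenever $t\ge \frac{n}{2\tilde p(e_1)}\|\tilde p\|_\infty^2+\|\tilde p\|_1$. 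Since this $U$ is one of the matrices in the supremum, this threshold is at most $T(p;x^*)$.

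I would split $\tilde p$ by the $z$-degree $j$ of its monomials. For $j=0$ there is a single term, $\tilde p(e_1)y_1^d$, and subtracting $\frac{\tilde p(e_1)}{2}y_1^d$ leaves $\frac{\tilde p(e_1)}{2}y_1^d$. For $j=1$ the terms are $y_1^{d-1}\sum_{i\ge2}c_iy_i$ with $|c_i|\le\|\tilde p\|_\infty$. I would group these with the $j=0$ remainder and with $t_1\rho(z)y_1^{d-2}$, which is part of $t_1h_d$:
\[
y_1^{d-2}\Bigl(\tfrac{\tilde p(e_1)}{2}y_1^2+\sum_{i\ge 2} c_iy_1y_i+t_1\rho(z)\Bigr).
\]
Completing the square, the bracketed quadratic form is psd once $t_1\ge \sum_i c_i^2/(2\tilde p(e_1))$, which holds if $t_1\ge \frac{n}{2\tilde p(e_1)}\|\tilde p\|_\infty^2$. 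Because $d-2$ is even, the product with $y_1^{d-2}=(y_1^{(d-2)/2})^2$ is sos.

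For each monomial $c\,y_1^{d-j}z^\alpha$ with $j=|\alpha|\ge2$, I would factor it as $uv$ with $u=y_1^az^\beta$ and $v=y_1^bz^\gamma$. Here each factor has degree $d/2$, $\beta+\gamma=\alpha$, and $|\beta|,|\gamma|\ge1$; this is possible precisely because $j\ge2$. Then $|c|(u^2+v^2)\pm 2c\,uv$ is a square times a nonnegative scalar. Moreover, $\rho(z)^{|\beta|}-z^{2\beta}$ is a polynomial in the $z_i^2$ with nonnegative coefficients, hence sos. It follows that $u^2=y_1^{2a}z^{2\beta}$ is dominated, in the sos order, by $y_1^{d-2k}\rho^k$ with $k=|\beta|\in\{1,\ldots,d/2\}$, which is a single summand of $h_d$, and similarly for $v^2$. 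Each summand $y_1^{d-2k}\rho^k$ is itself sos, so each such summand is dominated in the sos order by $h_d$. Consequently each monomial of $z$-degree at least $2$ costs at most $|c|\,h_d$ up to an sos remainder.

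Summing over all such monomials shows that $t_2=\|\tilde p\|_1$ suffices for this part. The $j=1$ step used only the $k=1$ summand of $h_d$, but the $j\ge 2$ step may also use that summand. I therefore take $t=t_1+t_2$ rather than a maximum, so the two parts draw on separate copies of $h_d$; any larger $t$ adds further sos multiples of $h_d$. Rotating back with $y=U^Tx$ then gives \eqref{eq:degd_homo} with $\sigma\in\HSOS_{n,d}$. I expect the main obstacle to be the bookkeeping: ensuring that every cross term of $z$-degree at least $2$ admits a balanced split with both factors carrying $z$-degree at least $1$, and that the constants add up to $T(p;x^*)$, which may call for a factor-of-$\tfrac12$ refinement in the AM--GM step.
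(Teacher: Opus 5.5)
Your proposal is correct and is essentially the paper's argument, carried out directly in homogeneous coordinates after the rotation rather than through the nonhomogeneous lemma and dehomogenization at $e_1$. Your $z$-degree plays the role of degree in the dehomogenized polynomial, completing the square absorbs the linear part, and your monomial-wise AM--GM bounds against the summands of $h_d$ are an explicit version of the paper's diagonal-dominance step. Your bookkeeping also already closes without any factor-of-$\tfrac12$ refinement: $c\,uv+\tfrac{|c|}{2}(u^2+v^2)$ is a square and $2h_d-(u^2+v^2)$ is sos, so each monomial costs $|c|\,h_d$, and $t_1+t_2$ is exactly the threshold for your particular $U$, which is at most $T(p;x^*)$.
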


\begin{remark}
The identity in~\eqref{eq:degd_homo} shows the implication that
\begin{equation}\label{eq:imp_homo}
\frac{p(x^*)}{2}(x^Tx^*)^d-th_d(x;x^*)\ge 0\quad\Rightarrow\quad p(x)
\ge 0.
\end{equation}
To gain geometric intuition, consider the region defined by
\begin{equation}\label{eq:spherical_cone}
A(x^Tx^*)^d-th_d(x;x^*)\ge 0,
\end{equation}
where $A$ and $t$ are positive scalars. Let $\theta$ denote the angle between $x$ and $x^*$. Since   $x^Tx^*=\|x\|_2\cos\theta$, the inequality above is equivalent to 
$\sum_{k=1}^{d/2}(\tan\theta)^{2k}\le A/t$.
Therefore, the subregion corresponds to $|\theta|\le\arctan z$, where $z$ is the unique positive real root of 
$\sum_{k=1}^{d/2} z^{2k}= A/t$. Consequently, identity~\eqref{eq:degd_homo} shows that a sufficiently small angle between $x$ and $x^*$ implies nonnegativity of $p(x)$.
\textcolor{black}{Figure~\ref{fig:spherical_cap} illustrates the intersection of the region defined by~\eqref{eq:spherical_cone} and $\mbS^{n-1}$, which is a spherical cap.}
\end{remark}

\begin{figure}[ht]
    \centering
    \includegraphics[width=0.3\textwidth]{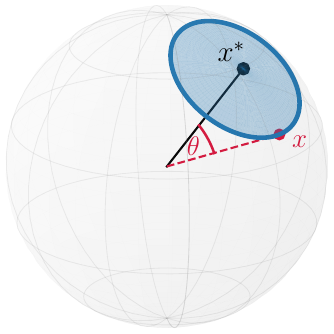}
    \caption{\textcolor{black}{A spherical cap centered at $x^*$: the local identity~\eqref{eq:degd_homo} certifies $p(x)\ge 0$ for every $x\in \mbS^{n-1}$ within angle $\theta$ of $x^*$.}}
    \label{fig:spherical_cap}
\end{figure}


To prove Theorem~\ref{thm:degd_homo}, we first establish the following lemma. 
Let us recall the multi-index notation that will be used in its proof. For an $n$-tuple of nonnegative integers $\alpha\defeq(\alpha_1,\ldots,\alpha_n)$, and a vector $x\in\reals^n$, we write $x^{\alpha}$ to denote $x_1^{\alpha_1}\ldots x_n^{\alpha_n}$ and $|\alpha|$ to denote $\sum_{i=1}^n \alpha_i$. For two multi-indices $\alpha$ and $\beta$, we denote by $\alpha+\beta$ their componentwise sum.


\begin{lemma}\label{thm:degd_nonhomo}
Consider a (not necessarily homogeneous) polynomial~$p$ in $n$ variables and degree at most $d$, where $d$ is even. If $p(x^*)>0$ for some $x^*\in\reals^n$, then for $t>0$ sufficiently large, $p(x)$ can be written as
\[p(x)=\left(\frac{p(x^*)}{2}-tg_d(x-x^*)\right)+\sigma(x),\]
where $\sigma(x)\in\SOS_{n,d}$ and 
\begin{equation}\label{eq:degd_nonhomo_g}
  g_d(x)\defeq\sum_{k=1}^{d/2}\|x\|_2^{2k}.  
\end{equation}
In particular, the above statement holds for any 
\[t\ge \tilde{T}(p;x^*)\defeq\dfrac{n}{2p(x^*)}\|q\|^2_{\infty} + \|q\|_1,\]
where $q(x)=p(x+x^*)-p(x^*)$. 
\end{lemma}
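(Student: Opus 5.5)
First translate so that $x^*=0$. Set $y=x-x^*$ and $q(y)=p(y+x^*)-p(x^*)$, so $q(0)=0$ and $\deg q\le d$. The claim becomes: for $t\ge \tilde T(p;x^*)$,
\[
\frac{p(x^*)}{2} + q(y) + t\,g_d(y)
\]
is a sum of squares in $y$ of degree at most $d$. Translating back gives $\sigma\in\SOS_{n,d}$, since affine changes of variables preserve the sos property and degrees. Write $c=p(x^*)>0$ and expand $q(y)=\sum_{1\le|\alpha|\le d} q_\alpha y^\alpha$.

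The plan is to absorb each monomial term separately, treating low-degree and high-degree monomials differently.

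\textbf{Degree-1 terms.} A linear term $q_{e_i}y_i$ is absorbed using the constant $c/2$ and part of $t\|y\|_2^2$:
\[
\frac{c}{2n}+q_{e_i}y_i+\frac{n q_{e_i}^2}{2c}y_i^2=\frac{c}{2n}\Bigl(1+\frac{n q_{e_i}}{c}y_i\Bigr)^2 .
\]
Summing over $i$ uses all of $c/2$ and at most $\frac{n}{2c}\|q\|_\infty^2\|y\|_2^2$. This is the source of the first term in $\tilde T$.

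\textbf{Higher-degree terms.} For $2\le|\alpha|\le d$, write $y^\alpha=y^\beta y^\gamma$ with $|\beta|=\lceil|\alpha|/2\rceil$ and $|\gamma|=\lfloor|\alpha|/2\rfloor$. Then use
\[
\pm y^\beta y^\gamma\ge-\tfrac12\bigl(y^{2\beta}+y^{2\gamma}\bigr),
\]
with the difference $\tfrac12(y^\beta\pm y^\gamma)^2$ being a square. This leaves even monomials $y^{2\beta}$ of degree $2k\le d$ with $k\ge1$. Each such monomial is dominated by $\|y\|_2^{2k}$ modulo sos, because $\|y\|_2^{2k}-y^{2\beta}$ is sos: expanding $(\sum y_i^2)^k$ by the multinomial theorem gives the term $y^{2\beta}$ with coefficient at least $1$, and all other terms are squares of monomials. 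The odd-degree case $|\alpha|=2k-1$ produces $y^{2\beta}$ with $|2\beta|=2k$ and $y^{2\gamma}$ with $|2\gamma|=2k-2$. When $k\ge2$ both degrees are at least $2$ and at most $d$, so both are covered. When $|\alpha|=2$, $\beta$ and $\gamma$ are single indices and both squares are quadratic.

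\textbf{Collecting coefficients.} Each coefficient $|q_\alpha|$ contributes at most $|q_\alpha|$ in total to the required multiples of $\|y\|_2^{2k}$, spread as $\tfrac12|q_\alpha|$ on each of the two pieces. Hence every level $k$ needs a coefficient at most $\frac{n}{2c}\|q\|_\infty^2+\|q\|_1$ on $\|y\|_2^{2k}$, and $t\ge\tilde T$ suffices. Any surplus $(t-\text{needed})\|y\|_2^{2k}$ is itself sos. All squares have degree at most $d$, so $\sigma$ has degree at most $d$.

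The main obstacle is the bookkeeping that keeps every produced square within degree $d$ and inside the span of the available powers $\|y\|^{2k}$, $1\le k\le d/2$. In particular, degree-1 terms cannot be bounded by $\|y\|^2$ alone, which is exactly why the constant $c/2$ is needed. Checking that the constants fit into the stated $\tilde T$ is routine.
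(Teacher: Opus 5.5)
Your proposal is correct and takes essentially the same route as the paper: the same translation to $x^*=0$, and the identical absorption of the linear terms via $\frac{c}{2n}\bigl(1+\frac{nq_{e_i}}{c}y_i\bigr)^2$; your bound $\pm y^\beta y^\gamma\ge-\frac12(y^{2\beta}+y^{2\gamma})$, together with $\|y\|_2^{2k}-y^{2\beta}$ being sos, is the term-by-term form of the paper's argument that the Gram matrix $\hat{Q}+tD$ (with $D$ diagonal and $D_{\alpha,\alpha}\ge 1$) is diagonally dominant for $t\ge\tilde{T}(p;x^*)$. Your balanced split $|\beta|=\lceil|\alpha|/2\rceil$ just makes explicit the Gram-matrix rearrangement and the ``one can verify'' coefficient bookkeeping that the paper leaves implicit.
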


\begin{proof}
Without loss of generality, we assume $x^*=0$; otherwise, applying the statement of the lemma to the polynomial $p(x+x^*)$ and recalling that the existence of an sos decomposition is invariant under an affine change of variables, yields the desired result. 
Let $$r_t(x)\defeq p(x)+tg_d(x)-\frac{p(0)}2=q(x)+tg_d(x)+\frac{p(0)}2.$$ 
Thus, our task is to show that $r_t(x)$ is sos for $t\ge \tilde{T}(p;0)$. 
To do this, we work with the following representation of the summands: 
\[\begin{aligned}
    q(x)+\frac{p(0)}{2}&=z(x)^TQz(x)=\sum_{\alpha:|\alpha|\le d/2}\sum_{\beta:|\beta|\le d/2} Q_{\alpha,\beta}x^{\alpha+\beta},\\
    g_d(x)&=z(x)^TDz(x)=\sum_{\alpha:|\alpha|\le d/2}\sum_{\beta:|\beta|\le d/2} D_{\alpha,\beta}x^{\alpha+\beta}.
\end{aligned}\]
Here, $z(x)\defeq(1,x_1,\ldots,x_n,\ldots,x_1^{d/2},\ldots,x_n^{d/2})$ is the vector of monomials of degree at most $d/2$, and $Q$ and $D$ are symmetric matrices. We use the multi-index notation 
$Q_{\alpha,\beta}$ (resp. $D_{\alpha,\beta}$) to refer to the appropriate entry of the matrix $Q$ (resp. $D$). 


Since $g_d(0)=0$, we have $D_{0,0}=0$. Observe that $g_d(x)$ is a positive linear combination of all monomials of the form $x^{2\alpha}$, where the multi-index $\alpha\neq 0$ and $|\alpha|\le d/2$. Thus, we can take $D$ to be diagonal with $D_{\alpha,\alpha}\ge 1$ for every nonzero multi-index $\alpha$. For the matrix $Q$, note that $q(0)=0$, so  $Q_{0,0}=p(0)/2$. Furthermore, the entries of $Q$ can be rearranged such that $Q_{0,\alpha}=Q_{\alpha,0}=0$ for all $\alpha$ with $|\alpha|\in\{2,\ldots, d/2\}$. This is possible because each monomial $x^{\alpha}$ with $|\alpha|\ge 2$ can be written as the product $x^{\beta}x^{\gamma}$ for some nonzero multi-indices $\beta,\gamma$ with $\beta+\gamma=\alpha$. With this rearrangement, we can write
\begin{equation}\label{eq:degd_nonhomo_rtx}
    \begin{aligned}
r_t(x)&=z(x)^T(Q+tD)z(x)\\
&=\frac{p(0)}{2n}\sum_{i=1}^n\left(1+\frac{nq_i}{p(0)}x_i\right)^2+z(x)^T(\hat{Q}+tD)z(x),
\end{aligned}
\end{equation}
where $q_i$ denotes the coefficient of the term $x_i$ in $q(x)$, and $\hat{Q}$ is defined entrywise as 
\[\hat{Q}_{\alpha,\beta}=
\begin{cases}
    0  & \text{if } \alpha= 0 \text{ or } \beta=0, \\
    Q_{\alpha,\beta}-\dfrac{nq_{i}^2}{2p(0)} & \text{if } (\alpha,\beta)=(e_i,e_i),\quad i=1,\ldots,n,\\ 
    Q_{\alpha,\beta} & \text{otherwise.}
\end{cases}\]
Here, $e_i$ is the $i$-th standard basis vector of $\reals^n$ (so that $x^{e_i}=x_i$ in multi-index notation). Finally, one can verify that when $t\ge \tilde{T}(p;0)$, the matrix $\hat{Q}+tD$ is diagonally dominant\footnote{Recall that a symmetric matrix $A=(a_{ij})$ is diagonally dominant if $a_{ii}\ge \sum_{j\neq i}|a_{ij}|$ for all $i$. It follows from Gershgorin's circle theorem~\cite{gershgorin1931uber} that diagonally {dominant} matrices are positive semidefinite.} and thus positive semidefinite. Therefore, when $t\ge\tilde{T}(p;0)$, the polynomial $z(x)^T(\hat{Q}+tD)z(x)$ is sos. In view of~\eqref{eq:degd_nonhomo_rtx}, when $t\ge\tilde{T}(p;0)$, $r_t(x)$ is also sos.

\end{proof}
We are now ready to prove Theorem~\ref{thm:degd_homo}.



\begin{proof}[Proof of Theorem~\ref{thm:degd_homo}]
We first consider the special case where $x^*=e_1$. Let $z\defeq(x_2,\ldots,x_n)$. We dehomogenize~$p$ by defining $\tilde{p}(z)\defeq p(1,z)$. Then $\tilde{p}(z)$ is a polynomial with $\tilde{p}(0)=p(e_1)>0$. 
By Lemma~\ref{thm:degd_nonhomo}, whenever $t\ge \tilde{T}(\tilde{p};0)$, $\tilde{p}(z)$ admits the representation
\[\tilde{p}(z)=\left(\frac{p(e_1)}{2}-tg_d(z)\right)+\tilde{\sigma}(z),\]
where $\tilde{\sigma}(z)\in\SOS_{n-1,d}$ and $g_d$ is defined as in~\eqref{eq:degd_nonhomo_g}. 
Next, homogenizing $\tilde{p}(z)$ yields the identity:
\[\begin{aligned}
p(x)&=p(x_1,z)=x_1^d \tilde{p}(z/x_1)\\
&=\left(\frac{p(e_1)}{2}x_1^d-t\sum\limits_{k=1}^{d/2}\|z\|_2^{2k}x_1^{d-2k}\right)+x_1^d \tilde{\sigma}(z/x_1)\\
&=\left(\frac{p(e_1)}{2}x_1^d-t\sum\limits_{k=1}^{d/2}(\|x\|_2^2-x_1^2)^{k}x_1^{d-2k}\right)+\sigma(x)\\
&=\left(\frac{p(e_1)}{2}x_1^d-th_d(x;e_1)\right)+\sigma(x),
\end{aligned}\]
where $\sigma(x)=x_1^d \tilde{\sigma}(z/x_1)\in\HSOS_{n,d}$, since homogenization preserves the sos property. Note that
\[\begin{aligned}
T(p;e_1)&\ge \dfrac{n}{2p(e_1)}\|p\|_{\infty}^2+\|p\|_1\\
&= \dfrac{n}{2\tilde{p}(0)}\|\tilde{p}\|^2_{\infty} + \|\tilde{p}\|_1\\
&=\tilde{T}(\tilde{p};0),
\end{aligned}\]
where the second line follows from the fact that the coefficients of~$p$ remain the same after dehomogenization. 
Therefore, in view of the fact that $h_d(x;e_1)$ is sos, the claim of Theorem~\ref{thm:degd_homo} follows in the case where $x^*=e_1$.


To establish the claim for a general point $x^*\in \mbS^{n-1}$, observe that for any such point there exists a matrix $U\in\reals^{n\times n}$ with $U^TU=I$, such that~$Ue_1=x^*$. Define $p_U(x)\defeq p(Ux)$. Then $p_U$ is a polynomial with $p_U(e_1)=p(x^*)>0$. Applying the statement of the theorem to~$p_U$, whenever $t\ge T(p_U;e_1)$, we have
\[
p_U(x)=\left(\frac{p_U(e_1)}{2}(x^Te_1)^d-t\sum\limits_{k=1}^{d/2}(\|x\|_2^2-(x^Te_1)^2)^{k}(x^Te_1)^{d-2k}\right)+\hat{\sigma}(x),\]
where $\hat{\sigma}(x)\in\HSOS_{n,d}$.
Substituting $x$ with $U^Tx$ gives
\[\begin{aligned}
p(x)&=p_U(U^Tx)\\
&=\left(\frac{p(x^*)}{2}(x^TUe_1)^d-t\sum\limits_{k=1}^{d/2}(\|U^Tx\|_2^2-(x^TUe_1)^2)^{k}(x^TU  e_1)^{d-2k}\right)+\hat{\sigma}(U^Tx)\\
&=\left(\frac{p(x^*)}{2}(x^Tx^*)^d-t\sum\limits_{k=1}^{d/2}(\|x\|_2^2-(x^Tx^*)^2)^{k}(x^Tx^*)^{d-2k}\right)+\hat{\sigma}(U^Tx)\\
&=\left(\frac{p(x^*)}{2}(x^Tx^*)^d-th_d(x;x^*)\right)+\hat{\sigma}(U^Tx),
\end{aligned}
\]
where in the second line we use the fact that the 2-norm is unitarily invariant. 
Since ${\hat{\sigma}(x)\in\HSOS_{n,d}}$ and the sos property is preserved under a linear change of variables, we have ${\hat{\sigma}(U^Tx)\in\HSOS_{n,d}}$. Finally, note that $T(p_U;e_1)=T(p;x^*)$, since the set of orthogonal matrices is invariant under multiplication by any of its elements.
\end{proof}


\begin{remark}
A closer examination of the proof of Theorem~\ref{thm:degd_homo} reveals that the polynomial $\sigma(x)$ in identity~\eqref{eq:degd_homo} is not only sos, but in fact ``scaled-diagonally-dominant-sum-of-squares'' (sdsos) with respect to a specific basis~\cite{ahmadi2017sum}. 
Indeed, from equation~\eqref{eq:degd_nonhomo_rtx} in the proof of Lemma~\ref{thm:degd_nonhomo}, we see that the polynomial $\hat{\sigma}(x)$ is sdsos. Consequently, $\sigma(x)=\hat{\sigma}(U^Tx)$ is sdsos with respect to a rotated monomial basis induced by the linear transformation $U^T$. This implies that the search for an algebraic proof of polynomial nonnegativity in the form of identity~\eqref{eq:degd_homo} can be achieved by a second-order cone program (SOCP). This observation connects to Algorithm~\ref{alg:uniform_grid} in Section~\ref{sec:disos_grid}, which can potentially be implemented as an SOCP-based method.
\end{remark}


We now present another local Positivstellensatz, analogous to Theorem~\ref{thm:degd_homo}, which provides an algebraic certificate of nonnegativity for a polynomial~$p$ in a neighborhood of a point $x^*$ where $p$ is positive. The neighborhood is defined by a single polynomial of even degree $d' \in \{2,\ldots,d-2\}$. This formulation offers additional flexibility in the choice of the sos polynomial multiplied by the polynomial defining the neighborhood.



\begin{theorem}\label{thm:degd_homo_d'}
Let $d$ be an even integer, $d'$ be an even integer in $\{2,\ldots,d-2\}$, and $p\in H_{n,d}$. 
If $p(x^*)>0$ for some $x^*\in \mbS^{n-1}$, then for $t>0$ sufficiently large, $p(x)$ can be written as
\begin{equation*}
    p(x)=\left(\frac{p(x^*)}{2}(x^Tx^*)^{d'}-th_{d'}(x;x^*)\right)\left((x^Tx^*)^{d-d'}+h_{d-d'}(x;x^*)\right)+\sigma(x),
\end{equation*}
where $\sigma(x)\in\HSOS_{n,d}$, and $h_{d'}(x;x^*)$, $h_{d-d'}(x;x^*)$ are as in~\eqref{eq:degd_homo_h}. In particular, the above statement holds for any
\[t\ge T'(p;x^*)\defeq T(p;x^*) + \frac{p(x^*)}{2},\]
where $T(p;x^*)$ is defined as in~\eqref{eq:degd_homo_t}. 
\end{theorem}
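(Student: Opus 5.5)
The plan is to derive this directly from Theorem~\ref{thm:degd_homo}, using a purely algebraic comparison of the two certificates. Write $A\defeq p(x^*)/2$, $a\defeq (x^Tx^*)^2$, $b\defeq \|x\|_2^2-(x^Tx^*)^2$, $m'\defeq d'/2$ and $m''\defeq (d-d')/2$, so that $m'+m''=d/2$ and $m',m''\ge 1$. For an integer $m\ge 0$ set
\[
S_m\defeq\sum_{k=0}^{m} b^k a^{m-k}.
\]
Then $h_{2m}(x;x^*)=S_m-a^m$ and $(x^Tx^*)^{2m}=a^m$. In this notation the neighborhood factor becomes $(A a^{m'}-t(S_{m'}-a^{m'}))\,S_{m''}$, which equals $(A+t)a^{m'}S_{m''}-tS_{m'}S_{m''}$.

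First I apply Theorem~\ref{thm:degd_homo} with parameter $s\defeq t-A$. This is allowed exactly when $t\ge T(p;x^*)+A=T'(p;x^*)$, which is how the threshold in the statement arises. It gives
\[
p=(A+s)a^{d/2}-sS_{d/2}+\sigma_0,\qquad \sigma_0\in\HSOS_{n,d}.
\]
Next I compute the remainder $\sigma\defeq p-(\text{product})$ and expand everything in the monomials $b^k a^{d/2-k}$ for $k=0,\ldots,d/2$. I use two expansions:
\[
a^{m'}S_{m''}=\sum_{k=0}^{m''} b^k a^{d/2-k},\qquad S_{m'}S_{m''}=\sum_{k=0}^{d/2} N_k\, b^k a^{d/2-k},
\]
where $N_k$ is the number of pairs $(i,j)$ with $0\le i\le m'$, $0\le j\le m''$ and $i+j=k$.

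Then I check the coefficient of each $b^k a^{d/2-k}$ in $\sigma-\sigma_0$.
\begin{itemize}
\item For $k=0$ the coefficient is $(A+s)-s-(A+t)+t=0$.
\item For $1\le k\le m''$ it is $-s+tN_k-(A+t)$. Here $N_k\ge 2$, since both $(0,k)$ and $(1,k-1)$ are valid pairs when $m'\ge 1$. So the coefficient is at least $t-s-A=0$.
\item For $k>m''$ it is $-s+tN_k\ge t-s=A>0$, since $N_k\ge 1$.
\end{itemize}
Hence $\sigma-\sigma_0$ is a nonnegative combination of the forms $b^k a^{d/2-k}$.

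Finally I show that each such form is sos. Since $\|x^*\|_2=1$, the quadratic form $b=x^T(I_n-x^*x^{*T})x$ is positive semidefinite, hence sos. Also $a^{d/2-k}=(x^Tx^*)^{d-2k}$ is a square. Products of sos forms are sos, so $\sigma\in\HSOS_{n,d}$.

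The main obstacle is the coefficient check for $1\le k\le m''$. There the $-(A+t)$ term coming from expanding the product must be absorbed by the extra multiplicity $N_k\ge 2$ in $S_{m'}S_{m''}$. This only works if the parameter used in Theorem~\ref{thm:degd_homo} is shifted down by $A$, and that shift is precisely what forces the threshold $T'=T+p(x^*)/2$.
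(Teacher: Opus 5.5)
Your proof is correct and is essentially the paper's argument. Both apply the degree-$d$ local certificate with parameter $t-p(x^*)/2$ and show that the leftover is a nonnegative combination of the sos forms $b^k a^{d/2-k}$; in the paper this leftover is $A(g_d-g_{d-d'})+\hat t\,g_{d'-2}\,g_{d-d'}$ before homogenization. The only difference is bookkeeping: the paper does this algebra in dehomogenized coordinates (Lemma~\ref{thm:degd_nonhomo_d'}) and then rehomogenizes and rotates, whereas you build directly on Theorem~\ref{thm:degd_homo} with a coefficient count, which saves repeating those two steps.
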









The main step in the proof of Theorem~\ref{thm:degd_homo_d'} is the following lemma, which builds on Lemma~\ref{thm:degd_nonhomo}.


\begin{lemma}\label{thm:degd_nonhomo_d'}
Consider a (not necessarily homogeneous) polynomial~$p$ in $n$ variables and degree at most $d$, where $d$ is even. Let $d'$ be an even integer in $\{2,\ldots,d-2\}$. If $p(x^*)>0$ for some $x^*\in\reals^n$, then for $\hat{t}>0$ sufficiently large, $p(x)$ can be written as
\[p(x)=\left(\frac{p(x^*)}{2}-\hat{t}g_{d'}(x-x^*)\right)\left(1+g_{d-d'}(x-x^*)\right)+\hat{\sigma}(x),\]
where $\hat{\sigma}(x)\in\SOS_{n,d}$. In particular, the above statement holds for any 
\[\hat{t}\ge \tilde{T}(p;x^*)+\frac{p(x^*)}{2}.\]
Here, $g_{d'}(x), g_{d-d'}(x)$, and $\tilde{T}(p;x^*)$ are defined as in Lemma~\ref{thm:degd_nonhomo}.
\end{lemma}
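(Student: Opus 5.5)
We reduce to $x^*=0$, as in the proof of Lemma~\ref{thm:degd_nonhomo}: replace $p$ by $p(x+x^*)$, which is legitimate because sos decompositions are invariant under affine changes of variables. Write $A\defeq p(0)/2$ and $G\defeq g_{d'}(1+g_{d-d'})$, where $g_{d'}$ and $g_{d-d'}$ are as in~\eqref{eq:degd_nonhomo_g}. Expanding the product in the statement, the claim becomes that
\[
\hat\sigma \defeq p - A - A\,g_{d-d'} + \hat t\, G
\]
is sos for every $\hat t\ge \tilde T(p;0)+A$. The idea is to reduce this to Lemma~\ref{thm:degd_nonhomo}, which says that $p - A + t\,g_d$ is sos for $t\ge \tilde T(p;0)$.

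The key observation is purely combinatorial and concerns powers of $\|x\|_2^2$. Write $u\defeq\|x\|_2^2$. Then $g_{d'}=\sum_{k=1}^{d'/2}u^k$ and $g_{d-d'}=\sum_{j=1}^{(d-d')/2}u^j$, so
\[
G=\sum_{k=1}^{d'/2}u^k+\sum_{k=1}^{d'/2}\sum_{j=1}^{(d-d')/2}u^{k+j}.
\]
Every exponent $m\in\{1,\ldots,d/2\}$ appears in this expansion with coefficient at least $1$. Indeed, $m\le d'/2$ is covered by the first sum, and $m\in\{d'/2+1,\ldots,d/2\}$ is covered by the second sum with $k=d'/2$ and $j=m-d'/2$. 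Conversely, no exponent exceeds $d/2$, so $\deg G = d$. Consequently both $G-g_d$ and $G-g_{d-d'}$ are nonnegative combinations of the powers $u^m$, $1\le m\le d/2$. Each $u^m=\|x\|_2^{2m}$ is sos (a power of a sum of squares), so both differences are sos of degree at most $d$. For $G-g_{d-d'}$ this uses $(d-d')/2\le d/2$.

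Now set $\hat t = \tilde T(p;0)+A+s$ with $s\ge 0$ and decompose:
\[
\hat\sigma=\bigl(p-A+\tilde T g_d\bigr)+\tilde T\,(G-g_d)+A\,(G-g_{d-d'})+s\,G .
\]
The first bracket is sos by Lemma~\ref{thm:degd_nonhomo} with $t=\tilde T(p;0)$. The next two terms are sos by the observation above, with nonnegative coefficients $\tilde T$ and $A$. The last term is sos because $G$ itself is a nonnegative combination of the $u^m$. All terms have degree at most $d$, so $\hat\sigma\in\SOS_{n,d}$, and translating back to $x^*$ gives the lemma. The only real point to check is the coefficient-domination step, in particular that the cross terms cover all exponents up to $d/2$ and none exceed $d/2$. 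This is exactly where the extra $p(x^*)/2$ in the threshold for $\hat t$ comes from: it pays for the term $-A\,g_{d-d'}$ produced by expanding the product.
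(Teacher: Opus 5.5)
Your proposal is correct and is essentially the paper's proof: both reduce to $x^*=0$, apply Lemma~\ref{thm:degd_nonhomo}, expand the product, and observe that what remains of $\hat\sigma$ is the sos term from that lemma plus nonnegative combinations of powers of $\|x\|_2^2$. The differences are bookkeeping only, and the two decompositions coincide after regrouping. You fix $t=\tilde T(p;0)$ and put the slack into $s\,g_{d'}(1+g_{d-d'})$, whereas the paper takes $t=\hat t-p(0)/2$; and you verify the domination by counting exponents, whereas the paper uses the explicit identities $g_{d'}(1+g_{d-d'})-g_d=g_{d'-2}\,g_{d-d'}$ and $g_d-g_{d-d'}=\sum_{k=(d-d')/2+1}^{d/2}\|x\|_2^{2k}$.
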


\begin{proof}
Without loss of generality, we assume $x^*=0$; otherwise, applying the statement of the lemma to the polynomial $p(x+x^*)$ and recalling that the existence of an sos decomposition is invariant under an affine change of variables, yields the desired result. By Lemma~\ref{thm:degd_nonhomo}, for any $t\ge \tilde{T}(p;0)$, $p(x)$ admits the representation
\[p(x)=\frac{p(0)}{2}-tg_d(x)+\sigma(x),\]
where $\sigma(x)\in\SOS_{n,d}$. Let $A=p(0)/2$ and $\hat{t}=t+A$. Then 
\[\begin{aligned}
p(x)&=A-tg_d(x)+\sigma(x)\\
&=\left(A-\hat{t}g_{d'}(x)\right)(1+g_{d-d'}(x))-Ag_{d-d'}(x)+\hat{t}g_{d'}(x)(1+g_{d-d'}(x))-tg_d(x)+\sigma(x)\\
&=\left(A-\hat{t}g_{d'}(x)\right)(1+g_{d-d'}(x)) + A\left(g_{d}(x)-g_{d-d'}(x)\right)\\
&\qquad +\hat{t}\big(g_{d'}(x)(1+g_{d-d'}(x))-g_d(x)\big)+ \sigma(x)\\
&=\left(A-\hat{t}g_{d'}(x)\right)(1+g_{d-d'}(x))+\hat{\sigma}(x),
\end{aligned}\]
where \[\hat{\sigma}(x)=\sigma(x)+A\left(g_{d}(x)-g_{d-d'}(x)\right)+\hat{t}\big(g_{d'}(x)(1+g_{d-d'}(x))-g_d(x)\big).\]
It remains to show that $\hat{\sigma}(x)\in\SOS_{n,d}$.
Observe that
\[g_{d}(x)-g_{d-d'}(x)=\sum_{k=(d-d')/2+1}^{d/2}\|x\|_2^{2k},\]
and
\[\begin{aligned}
   g_{d'}(x)(1+g_{d-d'}(x))-g_d(x)&=g_{d'}(x)+\left(g_{d'-2}(x)+\|x\|_2^{d'}\right)g_{d-d'}(x)-g_d(x)\\
   &=g_{d'-2}(x)g_{d-d'}(x).
\end{aligned}\]
Hence, both expressions are sos, which implies $\hat{\sigma}(x)$ is sos.
\end{proof}

The proof of Theorem~\ref{thm:degd_homo_d'} follows the same approach as the proof of Theorem~\ref{thm:degd_homo} and is therefore only outlined. We first consider the special case where $x^* = e_1$. We dehomogenize~$p$ by defining $\tilde{p}(z)\defeq p(1,z)$, apply Lemma~\ref{thm:degd_nonhomo_d'} to $\tilde{p}$, and homogenize back to show that whenever  $t \geq T'(p; e_1)$, $p(x)$ can be written as 
\[
    p(x)=\left(\frac{p(e_1)}{2}(x^Te_1)^{d'}-th_{d'}(x;e_1)\right)\left((x^Te_1)^{d-d'}+h_{d-d'}(x;e_1)\right)+\sigma(x),
\]
where $\sigma(x) \in \HSOS_{n,d}$. We then establish the claim for a general point $x^* \in \mbS^{n-1}$ via an orthogonal change of variables, as in the proof of Theorem~\ref{thm:degd_homo}.

\subsection{From Local Positivstellens\"atze to a Global Positivstellensatz}\label{sec:global_psatz}


Building on the results established in the previous subsection, we now present the proof of Theorem~\ref{cor:strictly_pos}.



\begin{proof}[Proof of Theorem~\ref{cor:strictly_pos}]
For any integer $m\ge 2$, we
construct a net $\mcN_m=\{x^1,\ldots,x^{r_m}\}\subset\mbS^{n-1}$ such that for any $y\in \mbS^{n-1}$, there exists $x\in \mcN_m$ with $\langle x,y\rangle\ge m/\sqrt{m^2+1}$. 
Fix an even degree $d'\in\{2,\ldots,d\}$ and let the initial algebraic disjunction be $\mcD_{d'}^1=\{\{0\}\}$. For $m\ge 2$, we construct the subsequent algebraic disjunctions as 
\begin{equation}\label{eq:strictly_pos_ad}
 \mcD^{m}_{d'}=\big\{\{(x^Tx^i)^{d'}-t_mh_{d'}(x;x^i)\},\quad i=1,\ldots,r_m\big\},
\end{equation}
where $t_m=\left(\sum_{k=1}^{d'/2}m^{-2k}\right)^{-1}$ and $h_{d}(x;x^i)$ is defined as in~\eqref{eq:degd_homo_h}. 
For any $x\in\mbS^{n-1}$ and any $x^i\in\mcN_m$, let $\theta\in[0,\pi]$ be the unique angle satisfying $\langle x,x^i\rangle=\cos\theta$. Then, it follows that
\[\begin{aligned}
    (x^Tx^i)^{d'}-t_mh_{d'}(x;x^i)&=(\cos\theta)^{d'}-t_m\sum_{k=1}^{d'/2}(\sin\theta)^{2k}(\cos\theta)^{d'-2k}\\
    &=(\cos\theta)^{d'}\left(1-t_m\sum_{k=1}^{d'/2}(\tan\theta)^{2k}\right).
\end{aligned}\]
Therefore, the inequality $(x^Tx^i)^{d'}-t_mh_{d'}(x;x^i)\ge 0$ holds if and only if 
\[
  \sum_{k=1}^{d'/2}(\tan\theta)^{2k}\le t_m^{-1}
  \quad\Leftrightarrow\quad |\tan\theta|\le m^{-1} \quad
  \Leftrightarrow\quad\langle x,x^i\rangle\ge \frac{m}{\sqrt{m^2+1}}. 
\]
Hence $\mcD_{d'}^{m}$ forms a valid algebraic disjunction of $\reals^n$ by the construction of the net. 

Finally, note that $t_m\to\infty$ as $m\to\infty$. 
Therefore, keeping in mind that $\mbS^{n-1}$ is a compact set, for any positive definite form $p\in H_{n,d}$, letting $p_{\min}\defeq \min_{x\in\mathbf{S}^{n-1}}p(x)$,  
there exists $m$ such that $t_m$ exceeds $2T(p;x^*)/p_{\min}$ (resp. $2T'(p;x^*)/p_{\min}$) for any $x^*\in\mbS^{n-1}$, where $T(p;x^*)$ (resp. $T'(p;x^*)$) is defined in Theorem~\ref{thm:degd_homo} (resp. Theorem~\ref{thm:degd_homo_d'}). 
This guarantees that $p\in \DiSOS_{n,d}^{d}(\mcD_{d'}^{m})$ {for} any fixed even degree $d'\in\{2,\ldots,d\}$.
\end{proof}

The preceding proof in fact yields an explicit bound on the number of subregions required to certify nonnegativity of a polynomial with a disjunctive sos certificate. As an illustration, we present such a bound for the case $d'=d$ in Theorem~\ref{thm:num_ad} below. Given the NP-hardness of certifying polynomial nonnegativity, any such bound is expected to grow exponentially with the dimension.

\begin{theorem}\label{thm:num_ad}
Consider a positive definite form $p\in H_{n,d}$. Let $p_{\min}\defeq\min_{x\in\mbS^{n-1}} p(x)$, and define
 \begin{equation}\label{eq:thm_counting}
T^*(p)\defeq \sup_{U\in\reals^{n\times n}:U^TU=I_n}\left( \frac{n}{2p_{\min}}\|p(Ux)\|_{\infty}^2 + \|p(Ux)\|_1\right),\quad
M(p)\defeq \left\lceil \sqrt{\frac{T^*(p)d}{p_{\min}}}\;\right\rceil.
\end{equation}
Let $\mcD_d^m$ be the family of algebraic disjunctions constructed in~\eqref{eq:strictly_pos_ad}, where each subset of $\mcD_d^m$ contains exactly one degree-$d$ form. Then $p\in\DiSOS_{n,d}^d(\mcD_d^m)$ for all $m\ge M(p)$. In particular, $\left(1+2 M(p)\right)^n$ subregions, each defined by a single degree-$d$ form, suffice to establish a disjunctive sos proof of nonnegativity of~$p$.
\end{theorem}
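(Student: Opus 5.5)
The plan is to rerun the proof of Theorem~\ref{cor:strictly_pos} with $d'=d$, tracking constants, and then to count net points. The argument has two parts: (i) show that $t_m\ge 2T(p;x^i)/p(x^i)$ for every net point $x^i$ once $m\ge M(p)$; (ii) build an explicit net $\mcN_m$ with at most $(1+2m)^n$ points, and take $m=M(p)$.

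For (i), fix $x^i\in\mcN_m$. We have $p(x^i)\ge p_{\min}>0$, and $T(p;x^i)$ is a supremum of terms whose first summand carries the factor $1/(2p(x^i))\le 1/(2p_{\min})$. Hence
\[
T(p;x^i)\le T^*(p)\quad\text{and}\quad \frac{2T(p;x^i)}{p(x^i)}\le \frac{2T^*(p)}{p_{\min}}.
\]
Theorem~\ref{thm:degd_homo} with $t=2t'/p(x^i)$ gives the identity
\[
p(x)=\frac{p(x^i)}{2}\Big((x^Tx^i)^d-t'h_d(x;x^i)\Big)+\sigma(x),\qquad \sigma\in\HSOS_{n,d},
\]
valid whenever $t'\ge 2T(p;x^i)/p(x^i)$. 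We want $t'=t_m$. This requires
\[
t_m=\Big(\sum_{k=1}^{d/2}m^{-2k}\Big)^{-1}\ge \frac{2T^*(p)}{p_{\min}}.
\]
Since $m\ge1$, we have $\sum_{k=1}^{d/2} m^{-2k}\le (d/2)m^{-2}$, so $t_m\ge 2m^2/d$. The condition then holds once $m^2\ge T^*(p)d/p_{\min}$, which is exactly $m\ge M(p)$. With this choice the multiplier of the disjunction form is the constant $p(x^i)/2$, which is a nonnegative (hence sos) degree-$0$ polynomial, and the product has degree $d$. So each subset of $\mcD_d^m$ yields the required identity, and $p\in\DiSOS_{n,d}^d(\mcD_d^m)$. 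The case $m=1$, where $\mcD^1=\{\{0\}\}$, only arises if $M(p)\le1$. I would dispatch it separately, either by noting that $T^*(p)d/p_{\min}\ge1$ always (using $\|p\|_1\ge p_{\min}$, evaluated at a point), or by simply starting the construction at $m\ge2$.

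For (ii), I need a net with $\langle x,y\rangle\ge m/\sqrt{m^2+1}$, that is, with angular radius $\arctan(1/m)$. The counting bound $(1+2m)^n$ suggests a cube grid. Take all nonzero integer vectors $v\in\{-m,\dots,m\}^n$ and normalize them; there are fewer than $(2m+1)^n$ of them. Given $y\in\mbS^{n-1}$, scale it so that $\|y\|_\infty=m$ with one coordinate equal to $\pm m$, and round the remaining coordinates to the nearest integers. The resulting $v$ has the angle to $y$ controlled by the rounding error, which is at most $1/2$ in each of $n-1$ coordinates.

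The main obstacle is that this naive rounding gives $\tan\theta\lesssim \sqrt{n-1}/(2m)$ rather than $1/m$. I would therefore expect to use a finer grid (resolution scaled by $\sqrt n$), or a more careful projection of cube faces, so that $\langle x,y\rangle\ge m/\sqrt{m^2+1}$ holds exactly. An alternative is to exploit the slack in the bound $t_m\ge 2m^2/d$ versus the actual requirement, absorbing a constant factor into the count. I would check which version matches $(1+2M(p))^n$. The most likely resolution is that each of the $(2m+1)^n$ grid directions covers the angular cell of its own rounding region, and that the bound on the tangent is arranged per coordinate, with the sufficiency inequality verified through the cap description in the preceding Remark.
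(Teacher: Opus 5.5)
Your part (i) is correct and is essentially the paper's argument. The paper applies Theorem~\ref{thm:degd_homo} with $t=T(p;x^i)$ and then moves the sos term $\bigl(\tfrac{p(x^i)}{2}t_m-T(p;x^i)\bigr)h_d(x;x^i)$ into $\sigma_i$; you invoke the theorem directly with a larger $t$. There are two small slips. The substitution should be $t=p(x^i)t'/2$, not $t=2t'/p(x^i)$. And since $\|p\|_1\ge p(e_1)\ge p_{\min}$, you get $T^*(p)d/p_{\min}\ge d\ge 2$, so $M(p)\ge 2$ and $m=1$ never arises.

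The genuine gap is part (ii). The cube grid cannot deliver $(1+2m)^n$, and the loss you found is intrinsic, not an artifact of your estimate. In the chart $x\mapsto x/x_1$, the normalized grid directions with $x_1>0$ are the points $a/k$ with $a\in\mathbb{Z}^{n-1}$, $|a_j|\le m$, $1\le k\le m$. Each nonzero coordinate of such a point has absolute value at least $1/m$. So the point with all coordinates equal to $\tfrac{1}{2m}$ is at distance at least $\sqrt{n-1}/(2m)$ from all of them, and for $n\ge 6$ and $m$ large the grid is not an $\arctan(1/m)$-net. Fixing this needs a grid of side about $m\sqrt{n-1}$, i.e., roughly $(1+m\sqrt{n-1})^n$ points, whose base grows with $n$. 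The slack in $t_m$ cannot compensate. Because $t_m\le m^2$, it lets you lower $m$ by at most a factor $\sqrt{d/2}$, independently of $n$, and by nothing when $d=2$ (there $t_m=m^2=2m^2/d$). Exploiting it would also change the disjunction rather than bound $r_{M(p)}$.

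The missing idea is a packing (volumetric) bound on covering numbers of the sphere; the paper uses this via Vershynin's Corollary 4.2.11. A maximal separated subset of $\mbS^{n-1}$ is automatically a net, and disjointness of small balls or caps around its points bounds its size by a quantity whose base does not depend on $n$.

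One caution when you write this step. The requirement $\langle x,y\rangle\ge m/\sqrt{m^2+1}$ is equivalent to $\|x-y\|_2\le\epsilon_m$ with $\epsilon_m=(2-2m/\sqrt{m^2+1})^{1/2}<1/m$. The paper shows that the angular condition implies $\|x-y\|_2\le 1/m$, but building the net needs the converse. The Euclidean bound with the correct radius only gives $(1+2/\epsilon_m)^n$, slightly more than $(1+2m)^n$.

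To get exactly $(1+2m)^n$, pack with spherical caps instead. Let $\theta_m=\arctan(1/m)$ and let $\omega_k$ denote the volume of the unit ball in $\reals^k$.
\begin{enumerate}
\item A maximal $X\subset\mbS^{n-1}$ with pairwise angles exceeding $\theta_m$ is a net of the required kind.
\item The caps of angular radius $\theta_m/2$ around the points of $X$ are disjoint, and each has area at least $\omega_{n-1}\sin^{n-1}(\theta_m/2)$ (project onto the tangent hyperplane).
\item One computes $\sin^{-2}(\theta_m/2)=2(m^2+1+m\sqrt{m^2+1})\le 4m^2+3$.
\item The sphere has area $n\omega_n\le\sqrt{2\pi n}\,\omega_{n-1}$, so $|X|\le\sqrt{2\pi n}\,(4m^2+3)^{(n-1)/2}$.
\end{enumerate}
A short convexity argument in $n$ shows this is at most $(2m+1)^n$ for all $m\ge 2$, which suffices because $M(p)\ge 2$.
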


\begin{proof}
Recall the definition of the algebraic disjunction
\[
 \mcD_d^{m}=\big\{\{(x^Tx^i)^{d}-t_mh_{d}(x;x^i)\},\quad i=1,\ldots,r_m\big\},   
\]
where $t_m=\left(\sum_{k=1}^{d/2}m^{-2k}\right)^{-1}$. For each $i=1,\ldots,r_m$, by Theorem~\ref{thm:degd_homo}, there exists a polynomial $\sigma_i\in\HSOS_{n,d}$ such that
 \[    p(x)=\left(\frac{p(x^i)}{2}(x^Tx^i)^d-T(p;x^i)h_d(x;x^i)\right)+\sigma_i(x),\]
 where $T(p;\cdot)$ is defined as in~\eqref{eq:degd_homo_t}. 
Rewriting gives
  \[\begin{aligned}    
  p(x)&=\frac{p(x^i)}{2}(x^Tx^i)^d+\sigma_i(x)-T(p;x^i)h_d(x;x^i)\\
  &=\frac{p(x^i)}{2}\left((x^Tx^i)^d-t_mh_d(x;x^i)\right)+\left(\sigma_i(x)+\left(\frac{p(x^i)}{2}t_m-T(p;x^i)\right)h_d(x;x^i)\right).
  \end{aligned}\]
   Note that $t_m\ge 2m^2/d$. Therefore, whenever $m\ge M(p)$,
  \[\frac{p(x^i)}{2}t_m\ge \frac{p_{\min}}{2}\cdot\frac{2M^2(p)}{d}\ge T^*(p)\ge T(p;x^i),\quad i=1,\ldots,r_m.\]
 Since $h_d(x;x^*)\in\HSOS_{n,d}$ for any $x^*\in\reals^n$, it follows that
 \[\sigma_i(x)+\left(\frac{p(x^i)}{2}t_m-T(p;x^i)\right)h_d(x;x^i)\in\HSOS_{n,d}.\]
 Thus, $p\in\DiSOS_{n,d}^d(\mcD_d^m)$ whenever $m\ge M(p)$. We now bound the size of $\mcN_m$ when $m=M(p)$. Recall that $\mcN_m=\{x^1,\ldots,x^{r_m}\}\subset\mbS^{n-1}$ is a net such that for any $y\in \mbS^{n-1}$, there exists $x\in \mcN_m$ with $\langle x,y\rangle\ge m/\sqrt{m^2+1}$. This implies that
 \[\begin{aligned}
\|x-y\|_2^2&=2-2\langle x,y\rangle\le 2\left(1-\frac{m}{\sqrt{m^2+1}}\right)\\
&= \frac{2(\sqrt{m^2+1}-m)}{\sqrt{m^2+1}}=\frac{2}{(\sqrt{m^2+1}+m)\sqrt{m^2+1}}\\
&\le \frac{1}{m^2}.
 \end{aligned}\]
By \cite[Corollary 4.2.11]{vershynin2018high}, we conclude that 
$r_m\le (1+2m)^n= (1+2M(p))^n$.
\end{proof}

\subsection{An Algorithm Based on Spherical-Cap Subregions} \label{sec:disos_grid}
Building on Theorem~\ref{cor:strictly_pos}, we propose an algorithm to approximate the minimum value $p_{\min}$ of a form $p\in H_{n,d}$ over the unit sphere $\mbS^{n-1}$. This quantity is also equal to the optimal value of the following optimization problem:
\begin{equation}\label{eq:poly_min}
	p_{\min} =\; \begin{array}[t]{cl}
		\max\limits_{\gamma\in{\scriptsize\reals}} & \gamma\\
		\st & p(x)-\gamma\|x\|_2^d\ge 0,\quad\forall x\in\reals^n.
	\end{array}
\end{equation}
For any even degree $d'\in\{2,\ldots,d\}$, we can present a complete SDP-based hierarchy of lower bounds on $p_{\min}$. At level $m$ of this hierarchy, we construct a net $\mcN_m$ of the unit sphere\footnote{There are different techniques for constructing such a net; see, e.g.,~\cite{leopardi_partition_2006,chakraborty2022generating}.} and the corresponding algebraic disjunction $\mcD_{d'}^m$ defined in~\eqref{eq:strictly_pos_ad}. Using the notation introduced in Definition~\ref{def:disos}, we compute a lower bound $p^{(m)}$ by solving the following SDP:
\begin{equation}\label{eq:poly_min_disos_relax}
	p^{(m)} \defeq \begin{array}[t]{cl}
		\max\limits_{\gamma\in{\scriptsize\reals}} & \gamma\\
		\st & p(x)-\gamma\|x\|_2^d\in\DiSOS_{n,d}^{d}(\mcD_{d'}^m).
	\end{array}
\end{equation}
{This SDP can be decomposed into $r_m$ independent SDPs, one for each subset in $\mcD_{d'}^m$, and solved in parallel; see Remark~\ref{remark:uniform_grid} for details.} 
The next theorem utilizes Theorem~\ref{thm:num_ad} to establish a convergence rate for the sequence of lower bounds $p^{(m)}$ produced by the SDP hierarchy in~\eqref{eq:poly_min_disos_relax} (in the case $d'=d$ as a representative example).


\begin{theorem}\label{thm:poly_min_disos_lb}
Let~$p\in H_{n,d}$ and~$p_{\min}\defeq\min_{x\in\mbS^{n-1}} p(x)$. Let $\mcD^m_d$ be the family of algebraic disjunctions constructed in~\eqref{eq:strictly_pos_ad}, where each subset of $\mcD_d^m$ contains exactly one degree-$d$ form. 
Then, we have
\[p_{\min}-p^{(m)}= O(1/m).\] 
\end{theorem}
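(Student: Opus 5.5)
The plan is to apply Theorem~\ref{thm:num_ad} to the shifted form $p_\gamma(x)\defeq p(x)-\gamma\|x\|_2^d$ for a suitable $\gamma<p_{\min}$, and to pick $\gamma$ so that the threshold $M(p_\gamma)$ is at most $m$. Feasibility of $\gamma$ in~\eqref{eq:poly_min_disos_relax} then gives $p^{(m)}\ge\gamma$. Since $p^{(m)}\le p_{\min}$ always holds, because $\DiSOS_{n,d}^d(\mcD_d^m)\subseteq\NN_{n,d}$, it suffices to exhibit such a $\gamma$ with $p_{\min}-\gamma=O(1/m)$. Write $\delta\defeq p_{\min}-\gamma>0$. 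Then $p_\gamma$ is positive definite and $\min_{\mbS^{n-1}}p_\gamma=\delta$, because $\|x\|_2^d=1$ on the sphere.

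First I will bound the quantity $T^*(p_\gamma)$ from~\eqref{eq:thm_counting}, uniformly for $\gamma$ in a bounded range such as $\gamma\in[p_{\min}-1,p_{\min})$. The key observation is that $\|x\|_2^d$ is orthogonally invariant, so $p_\gamma(Ux)=p(Ux)-\gamma\|x\|_2^d$. The coefficients of $p(Ux)$ are bounded uniformly over orthogonal $U$: they are continuous in $U$ on a compact set, or one can bound them explicitly since the entries of $U$ have modulus at most $1$. Hence there are constants $C_\infty,C_1$, depending only on $p$, $n$ and $d$, with
\[
\sup_U\|p_\gamma(Ux)\|_\infty\le C_\infty,\qquad \sup_U\|p_\gamma(Ux)\|_1\le C_1 .
\]
Plugging in $\min_{\mbS^{n-1}}p_\gamma=\delta$ gives
\[
T^*(p_\gamma)\le \frac{nC_\infty^2}{2\delta}+C_1 .
\]

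Next I will insert this into $M(p_\gamma)$:
\[
M(p_\gamma)=\left\lceil\sqrt{T^*(p_\gamma)\,d/\delta}\right\rceil
\le\left\lceil\sqrt{\frac{d\,nC_\infty^2}{2\delta^2}+\frac{dC_1}{\delta}}\,\right\rceil .
\]
For $\delta\le1$ the expression under the square root is at most $C/\delta^2$ with $C\defeq d(nC_\infty^2/2+C_1)$, so $M(p_\gamma)\le\lceil\sqrt{C}/\delta\rceil$. I will choose $\delta=\sqrt{C}/m$, which is at most $1$ once $m\ge\sqrt{C}$. This gives $M(p_\gamma)\le m$, and Theorem~\ref{thm:num_ad} yields $p_\gamma\in\DiSOS^d_{n,d}(\mcD^m_d)$. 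Therefore $p^{(m)}\ge p_{\min}-\sqrt{C}/m$, which is the claimed $O(1/m)$ rate; for small $m$ the bound is trivial after adjusting the constant.

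I expect the main obstacle to be the dependence of $T^*$ on $p_{\min}$. A naive reading would give $M\sim\delta^{-1/2}$ and hence a rate of $O(1/m^2)$. The $n\|\cdot\|_\infty^2/(2\delta)$ term is what makes $T^*/\delta$ scale like $\delta^{-2}$, and this term controls the rate. The step that needs care is therefore checking that the coefficient norms of $p_\gamma(Ux)$ stay bounded independently of $\delta$ and $U$. This holds because $\gamma$ ranges over a bounded interval and the rotated coefficients of $p$ and of $\|x\|_2^d$ are uniformly bounded.
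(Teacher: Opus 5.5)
Your proof is correct and follows essentially the same route as the paper. You apply Theorem~\ref{thm:num_ad} to $p-\gamma\|x\|_2^d$, observe that $T^*$ grows like $1/\delta$ (so $M$ grows like $1/\delta$, where $\delta=p_{\min}-\gamma$), and invert to get $\delta=O(1/m)$. The only real difference is in how $T^*$ is bounded. The paper bounds $T^*(p-\epsilon\|x\|_2^d)$ in terms of $T^*(p)$, which tacitly requires $p_{\min}>0$. Your uniform bounds on the rotated coefficient norms, taken over the bounded range $\gamma\in[p_{\min}-1,p_{\min})$, do not need that assumption, so your argument covers forms with $p_{\min}\le 0$ without a separate reduction.
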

\begin{proof}
Let $q(x)\defeq\|x\|_2^d$ and fix $\epsilon\in(0,p_{\min})$. By Theorem~\ref{thm:num_ad}, we have $p-\epsilon q\in\DiSOS_{n,d}^d(\mcD_d^m)$ for all $m\ge M(p-\epsilon q)$, where $M(\cdot)$ is defined in~\eqref{eq:thm_counting}. Thus, $ p^{(m)}\ge \epsilon$ 
whenever $m\ge M(p-\epsilon q)$. Let $c_{\infty}\defeq \|q\|_{\infty},c_1\defeq\|q\|_1$, which are constants depending only on $n$ and $d$. We now bound $T^*(p - \epsilon q)$ from above using the definition in~\eqref{eq:thm_counting}:
\[\begin{aligned}
T^*(p-\epsilon q)&= \sup_{U\in\reals^{n\times n}:U^TU=I_n}\left( \frac{n}{2(p_{\min}-\epsilon)}\|(p-\epsilon q)(Ux)\|_{\infty}^2 + \|(p-\epsilon q)(Ux)\|_1\right) \\
&\le \sup_{U\in\reals^{n\times n}:U^TU=I_n}\left( \frac{n}{p_{\min}-\epsilon}\left(\|p(Ux)\|_{\infty}^2+\epsilon^2\|q(Ux)\|_{\infty}^2\right) + \|p(Ux)\|_1+\epsilon \|q(Ux)\|_1\right)\\
&= \sup_{U\in\reals^{n\times n}:U^TU=I_n}\left( \frac{n}{p_{\min}-\epsilon}\left(\|p(Ux)\|_{\infty}^2+c_{\infty}^2\epsilon^2\right) + \|p(Ux)\|_1+c_1\epsilon\right)\\
&\le \frac{2p_{\min}}{p_{\min}-\epsilon} T^*(p)+\frac{c_{\infty}^2n\epsilon^2}{p_{\min}-\epsilon}+c_1\epsilon\\
&= \frac{2p_{\min}T^*(p)+c_{\infty}^2n\epsilon^2+c_1\epsilon(p_{\min}-\epsilon)}{p_{\min}-\epsilon}\\
&\le \frac{2p_{\min}T^*(p)+(c_{\infty}^2n+c_1/4)p_{\min}^2}{p_{\min}-\epsilon}.
\end{aligned}\]
Here, in the first inequality we used the triangle inequality for the polynomial norms. 
Therefore, for any $\epsilon\in(0,p_{\min})$, 
\[M(p-\epsilon q)\le \sqrt{\frac{2T^*(p-\epsilon q)d}{p_{\min}-\epsilon}}+1\le \frac{\sqrt{2(2p_{\min}T^*(p)+(c_{\infty}^2n+c_1/4)p_{\min}^2)d}}{p_{\min}-\epsilon}+1.\]
Finally, since the above bound holds for every $\epsilon\in(0,p_{\min})$, when~$m$ is sufficiently large, we may choose $\epsilon\in(0,p_{\min})$ such that
\[m = \frac{\sqrt{2(2p_{\min}T^*(p)+(c_{\infty}^2n+c_1/4)p_{\min}^2)d}}{p_{\min}-\epsilon}+1.\]
With this choice, it follows that
\[p_{\min}-p^{(m)}\le p_{\min}-\epsilon= \frac{\sqrt{2(2p_{\min}T^*(p)+(c_{\infty}^2n+c_1/4)p_{\min}^2)d}}{m-1}=O(1/m).\]
\end{proof}

Since the net $\mcN_m$ already appears in the formulation of our SDP, it is natural to use the points in $\mcN_m$ to obtain upper bounds on $p_{\min}$ via direct point evaluation. 
As the level $m$ increases, the resolution of the net improves and its covering radius tends to zero, yielding increasingly tight upper bounds on $p_{\min}$. In particular, by adapting the argument in the comments following~\cite[Proposition 4]{vianello_subperiodic_2018}, one can construct a net such that the error between the resulting upper bound at level $m$ and $p_{\min}$ is of order $O(1/m^2)$. Combined with Theorem~\ref{thm:poly_min_disos_lb}, this implies that the gap between the lower and upper bounds on $p_{\min}$ converges to zero as $m$ increases, with an overall rate of $O(1/m)$. Therefore, for any prescribed tolerance $\epsilon_{\text{tol}}>0$ to approximate $p_{\min}$, Algorithm~\ref{alg:uniform_grid} is guaranteed to terminate within $O(1/\epsilon_{\text{tol}})$ iterations. 



\begin{algorithm}[!tb]
\caption{Spherical-Cap-Based Algorithm for Minimizing a Form}
\label{alg:uniform_grid}

\newlength{\algmathwidth}
\setlength{\algmathwidth}{\dimexpr\linewidth-\algorithmicindent\relax}
\begin{algorithmic}
\Require A polynomial $p\in H_{n,d}$, an even number $d'\in\{2,\ldots,d\}$, a tolerance $\epsilon_{\text{tol}}>0$.
\Ensure  A lower bound $L$ on the minimum value of $p$ over $\mbS^{n-1}$ and a point $\bar{x}\in\mbS^{n-1}$ such that 
$p(\bar{x})-L\le \epsilon_{\text{tol}}$.
\Initialization Let $L=-\infty, U=\infty, m=1$.
\While{$U-L> \epsilon_{\text{tol}}$}
\State Generate a net $\mcN_m=\{x^1,\ldots,x^{r_m}\}\subset\mbS^{n-1}$ such that for all $y\in \mbS^{n-1}$, there exists\\
\hspace*{\algorithmicindent} $x\in \mcN_m$ with $\langle x,y\rangle\ge m/\sqrt{m^2+1}$.
\State $t_m\gets\left(\sum_{k=1}^{d'/2}m^{-2k}\right)^{-1}$.
\State 
$L^m\gets$ \parbox[t]{\algmathwidth}{%
$\begin{array}[t]{cl}
 \max\limits_{\gamma,s_i,\hat{s}_i} & \gamma\\
 \st & p(x)-\gamma\|x\|_2^d= s_{i}(x)+\left((x^Tx^i)^{d'}-t_mh_{d'}(x;x^i)\right)\hat{s}_{i}(x),\quad i=1,\ldots,r_m\\
 & s_{i}\in\HSOS_{n,d},\hat{s}_{i}\in\HSOS_{n,d-d'},\quad i=1,\ldots,r_m.
\end{array}$
}
\State $L\gets\max\{L,L^m\}$.
\State $U^m\gets\min_{1\le i\le r_m}\{p(x^i)\}$, $j\gets\argmin_{1\le i\le r_m}\{p(x^i)\}$
\Comment{Break ties arbitrarily.}
\If{$U^m < U$} $U\gets U^m$, $\bar{x}\gets x^{j}$.
\EndIf
\State $m\gets m+1$.
\EndWhile
\State \Return{$L, \bar{x}$}
\end{algorithmic}
\end{algorithm}

\begin{remark}\label{remark:uniform_grid}
{The computation of $L^m$ in Algorithm~\ref{alg:uniform_grid} can be decomposed into $r_m$ independent SDPs that can be solved in parallel.
Indeed, for each $k\in\{1,\ldots,r_m\}$, one can compute
\begin{equation*}
\gamma_k^*\defeq
\begin{array}[t]{cl}
\max\limits_{\gamma_k,s_k,\hat{s}_k} & \gamma_k\\
\st & p(x)-\gamma_k\|x\|_2^d= s_{k}(x)+\left((x^Tx^k)^{d'}-t_mh_{d'}(x;x^k)\right)\hat{s}_{k}(x)\\
& s_{k}\in\HSOS_{n,d},\ \hat{s}_{k}\in\HSOS_{n,d-d'},
\end{array}
\end{equation*}
and set $L^m\gets\min_{1\le k\le r_m}\gamma_k^*$.}
\end{remark}


In practice, it is often more efficient to refine the subregions of the unit sphere adaptively instead of increasing the resolution of the net uniformly. Such an adaptive algorithm can be implemented within a spatial branch-and-bound framework, as discussed in Section~\ref{sec:bnb}.

\section{An Optimization-Free Disjunctive Positivstellensatz}\label{sec:disos_new}
In this section, we introduce a new disjunctive approach to certify polynomial nonnegativity. Unlike our previous approach, which relies on sos polynomials and {solutions} to SDPs, the new approach only requires checking nonnegativity of the coefficients of some polynomials we can explicitly write down. In this framework, \emph{checking} polynomial nonnegativity does not require any optimization solver at all, and \emph{imposing} a nonnegativity constraint on a polynomial leads to a linear program. Our main result in this section is an optimization-free disjunctive Positivstellensatz (Theorem~\ref{cor:strictly_pos_new}), analogous to Theorem~\ref{cor:strictly_pos}. 

\subsection{Simplicial Disjunctions and Optimization-Free Certificates}
In this section, we work with algebraic disjunctions where the corresponding subregions of the space are polyhedral cones. 
Recall that a polyhedral cone in $\reals^n$ can be represented as a conic combination of a finite number of vectors $v_1,\ldots,v_{\ell}\in\reals^n$: 
\[\cone\{v_1,\ldots,v_{\ell}\}=\{x\in\reals^n\mid x=\alpha_1v_1+\ldots+\alpha_{\ell}v_{\ell}\;\text{ for some }\alpha_1,\ldots,\alpha_{\ell}\ge 0\}.\]
We say that a polyhedral cone is simplicial if it is generated by linearly independent vectors.

\begin{definition}
Let $r$ be a positive integer. We say that a collection of $n\times n$ invertible matrices $\mcV=\{V_k\}_{k=1,\ldots,r}$ forms a \emph{simplicial disjunction} of $\reals^n$ if 
\[\mathbb{R}^n=\bigcup\limits_{k=1}^r\Omega_k,\quad \text{where }\quad\Omega_k=\cone\{V_k\}\defeq\cone\{v_{k,1}, \ldots, v_{k,n}\},\]
and $v_{k,j}$ is the $j$-th column of $V_k$.
\end{definition}

Note that any vector $x\in\cone\{V_k\}$ can be written as $x=V_kz$ for some elementwise nonnegative vector $z\in\reals^n$. Since $V_k$ is invertible, this implies that $$\cone\{V_k\}=\{x\in\reals^n\mid V_k^{-1}x\ge 0\}.$$ 
Therefore, the simplicial disjunction $\mcV$ can be viewed as a special case of an algebraic disjunction introduced in Definition~\ref{def:disjunction} with $n_k=n$ for $k=1,\ldots,r$, and polynomials $q_{k,j}(x)=c_{k,j}^Tx$ for $k=1,\ldots,r,j=1,\ldots,n$, where $c_{k,j}\in\reals^n$ are the columns of $V_k^{-T}$. 
We now explain how simplicial disjunctions can help certify polynomial nonnegativity. Because of the homogenization argument given in Section~\ref{sec:local_psatz}, we restrict attention to forms.
\begin{definition}\label{def:disos_new}
A polynomial~$p\in H_{n,d}$ is said to \emph{have nonnegative coefficients with respect to a simplicial disjunction} $\mcV=\{V_k\}_{k=1,\ldots,r}$ if $p(V_kx)$ has nonnegative coefficients for all $k=1,\ldots, r$. We denote the set of such polynomials by $\DiSOSL_{n,d}(\mcV)$. 
\end{definition}
Analogous to the cone $\DiSOS_{n,d}^{\bar{d}}(\mcD)$ 
in Definition~\ref{def:disos}, the set $\DiSOSL_{n,d}(\mcV)$ is a convex cone.  
Moreover, any polynomial $p\in\DiSOSL_{n,d}(\mcV)$ has an explicit disjunctive proof of nonnegativity. Indeed, for $k=1,\ldots,r$, let $\sigma_k(x)\defeq p(V_k x)$, which, by definition, has nonnegative coefficients. We can therefore write 
\[p(x)=\sigma_k(V_k^{-1}x),\quad k=1,\ldots,r.\]
Since $\sigma_k$ has nonnegative coefficients, we have the implication
\[V_k^{-1}x\ge 0\ \Rightarrow\ \sigma_k(V_k^{-1}x)\ge 0,\]
for all $k=1,\ldots,r$. Equivalently, $x\in\cone\{V_k\}\ \Rightarrow\ p(x)\ge 0$. Since $\bigcup_{k=1}^r\cone\{V_k\}=\reals^n$ by the definition of a simplicial disjunction,~$p$ is nonnegative over $\reals^n$.

We now establish the following optimization-free disjunctive Positivstellensatz, which runs parallel to the disjunctive Positivstellensatz in Theorem~\ref{cor:strictly_pos}. It shows that every positive definite form has a disjunctive proof of nonnegativity of the above type.
\begin{theorem}[Optimization-Free Disjunctive Positivstellensatz]\label{cor:strictly_pos_new}
For any fixed dimension $n$ and degree $d$, there exists an explicit family of simplicial disjunctions
$\mcV^m$, indexed by $m$, such that for any positive
 definite form $p\in H_{n,d}$, we have $p\in \DiSOSL_{n,d}(\mcV^m)$ for some $m$.
\end{theorem}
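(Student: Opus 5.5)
The plan is to cover $\reals^n$ by simplicial cones of vanishing angular diameter, and to check that a positive definite form, after a linear change of coordinates adapted to a sufficiently thin cone, has nonnegative coefficients. The argument is a Pólya-type statement localized to small simplicial cones.

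\emph{Construction of $\mcV^m$.} Start from the $2^n$ orthants $\cone\{\pm e_1,\ldots,\pm e_n\}$, which form a simplicial disjunction. Within each orthant, use a fixed refinement rule: for example, the edgewise (Freudenthal) subdivision of the standard simplex $\Delta=\conv\{e_1,\ldots,e_n\}$ into $m^{n-1}$ simplices of diameter $O(1/m)$, followed by the sign flips. Each small simplex $\conv\{w_1,\ldots,w_n\}\subset\Delta$ gives an invertible matrix $V=[w_1,\ldots,w_n]$. The cones of these matrices cover the orthant, so $\mcV^m$ is an explicit simplicial disjunction. For each such $V$ the columns lie on the simplex of some orthant, within distance $O(1/m)$ of a common point $c=w_1$, and $\|w_j\|_2\ge 1/\sqrt n$.

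\emph{Coefficient analysis.} Fix $V$ and write $w_j=c+\delta_j$ with $\|\delta_j\|\le C/m$. Expand
\[
p(Vz)=p\Big(\sum_j z_j c+\sum_j z_j\delta_j\Big)=p\big((\textstyle\sum_j z_j)c\big)+R(z).
\]
The first term equals $p(c)\,(z_1+\cdots+z_n)^d$. All of its coefficients are at least $p(c)\ge p_{\min}n^{-d/2}>0$, where $p_{\min}=\min_{\mbS^{n-1}}p$, since every multinomial coefficient is at least $1$. In $R(z)$ every monomial comes with at least one factor of some $\delta_j$, so by a Taylor/multilinear expansion each coefficient of $R$ is bounded in absolute value by $K(n,d)\,\|p\|_1\,(C/m)$, uniformly over all cells. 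Hence once $m>K\|p\|_1 C\,n^{d/2}/p_{\min}$, every coefficient of $p(Vz)$ is positive for every $V\in\mcV^m$. This gives $p\in\DiSOSL_{n,d}(\mcV^m)$, and in fact for all sufficiently large $m$.

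\emph{Main obstacle.} The hard step is making the perturbation bound on the coefficients of $R$ uniform and explicit, and in particular comparing it against the smallest coefficient of $(\sum_j z_j)^d$ rather than the typical one. The approach is to write $p$ via its symmetric $d$-linear form $P$, so that
\[
p\Big(\sum_j z_j w_j\Big)=\sum_{j_1,\ldots,j_d}z_{j_1}\cdots z_{j_d}\,P(w_{j_1},\ldots,w_{j_d}).
\]
The coefficient of $z^\alpha$ is then a multinomial count times an average of values $P(w_{j_1},\ldots,w_{j_d})$. So it suffices to show $P(w_{j_1},\ldots,w_{j_d})>0$ for all tuples, and this holds because $P(w_{j_1},\ldots,w_{j_d})$ differs from $P(c,\ldots,c)=p(c)$ by at most $d\,\|P\|\,(C/m)\,\max(\|c\|,\|w_j\|)^{d-1}$ by multilinearity. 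This avoids counting coefficients entirely, and I expect it to be the cleanest route.
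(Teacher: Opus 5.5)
Your proof is correct. Its global part is the same as the paper's: the $2^n$ orthants, each refined by an edgewise subdivision of the simplex into $m^{n-1}$ cells. The paper additionally rescales the columns to unit length. This changes no signs, since for a positive diagonal $D$ the coefficients of $p(VDz)$ are those of $p(Vz)$ multiplied by positive factors. Doing so would also trade the factor $n^{d/2}$ coming from your bound $p(c)\ge p_{\min}n^{-d/2}$ for a factor $O(\sqrt{n})$ in the cell diameter.

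Where you differ is the local step. The paper proves a separate local statement, Theorem~\ref{thm:degd_homo_new}. It rotates $x^*$ to $e_1$, bounds the coefficient of $x^\beta$ in each $(Vx)^\alpha$ by multinomial counting, and isolates the dominant term $\alpha=de_1$. It then sums the remaining errors using the rotation-invariant quantity $\eta(p)=\sup_U\|p(Ux)\|_\infty$ and a count of multi-indices. The output is an explicit Frobenius radius $\delta(p;x^*)$ around $x^*\mathbf{1}^T$, which the paper reuses for the copositivity theorem and the branch-and-bound termination claim.

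Your first sketch, perturbing $p(c)(z_1+\cdots+z_n)^d$, is the same idea. Your polarization route is cleaner. The coefficient of $z^\alpha$ in $p(Vz)$ is exactly $\binom{d}{\alpha}$ times $P$ evaluated at $w_1$ repeated $\alpha_1$ times, \ldots, $w_n$ repeated $\alpha_n$ times. So nonnegativity of the coefficients is \emph{equivalent} to nonnegativity of the polar form on $d$-multisets of columns. The telescoping estimate then finishes the job with no rotation and no multi-index counting. The concern about comparing against the smallest multinomial coefficient also disappears, because the factor $\binom{d}{\alpha}$ is exact. The constant $\|P\|$ is at most $\frac{d^d}{d!}\max_{\mbS^{n-1}}|p|$ by the polarization inequality, so your estimate would serve the paper's later uses equally well.

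Finally, your hedged claim that the cells have diameter $O(1/m)$ is the correct one. The paper asserts that any two vertices of a cell are at distance $\sqrt{2}/m$, but this fails for $n\ge 4$. For $n=4$ and $m=2$, the central octahedron must be cut along a diagonal such as the segment from $(e_1+e_2)/2$ to $(e_3+e_4)/2$, which has length $1=2/m$. This only affects the constants in $\tilde{M}(p)$.
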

\subsection{A Local Optimization-Free Positivstellensatz}
 To prove Theorem~\ref{cor:strictly_pos_new}, we first establish {a} local optimization-free Positivstellensatz in Theorem~\ref{thm:degd_homo_new}, which runs parallel to Theorem~\ref{thm:degd_homo}. Recall that $\|\cdot\|_{\infty}$ denotes the largest coefficient in absolute value of a polynomial in the standard monomial basis. For an $n$-dimensional multi-index $\alpha$ with $|\alpha|=d$, we define
 \[\binom{d}{\alpha}\defeq \frac{d!}{\alpha_1!\ldots\alpha_n!}.\]
We denote the $n$-dimensional vector of all ones by $\mathbf{1}$.


\begin{theorem}\label{thm:degd_homo_new}
For a polynomial $p\in H_{n,d}$ with $d$ even, if $p(x^*)>0$ for some $x^*\in \mbS^{n-1}$, then there exists $\delta >0$ such that for any matrix $V\in\reals^{n\times n}$ with $\|V-x^*\mathbf{1}^T\|_{F}\le \delta$, the polynomial $p(Vx)$ has nonnegative coefficients.
In particular, the above statement holds for any
\[\delta \le \delta(p;x^*)\defeq \sup_{\epsilon>1/p(x^*)}\min\left\{\frac{(p(x^*)\epsilon)^{1/d}-1}{( p(x^*)\epsilon)^{1/d}+1},\frac{1}{(1+\eta(p)\epsilon)n}\right\},\]
where
\[\eta(p)\defeq\sup_{U\in\reals^{n\times n}:U^TU=I_n} \|p(Ux)\|_{\infty}.\]
\end{theorem}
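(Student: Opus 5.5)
The plan is to reduce to the case $x^*=e_1$ by an orthogonal change of variables, and then compare the coefficients of $p(Vx)$ against those of the dominant term $p(x^*)(\mathbf{1}^Tx)^d$. This term has coefficient $p(x^*)\binom{d}{\alpha}>0$ on every monomial $x^\alpha$ of degree $d$.

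\textbf{Step 1 (rotation).} Choose an orthogonal $U$ with $Ue_1=x^*$ and set $p_U(y)\defeq p(Uy)=\sum_{|\beta|=d}c_\beta y^\beta$. Then $c_{de_1}=p_U(e_1)=p(x^*)$, and $|c_\beta|\le \|p_U\|_\infty\le\eta(p)$ for all $\beta$. Write $V=x^*\mathbf{1}^T+E$ with $\|E\|_F\le\delta$. Then $p(Vx)=p_U(U^TVx)$ and $U^TV=e_1\mathbf{1}^T+F$ with $F=U^TE$. Since the Frobenius norm is unitarily invariant, every entry of $F$ is at most $\delta$ in absolute value. So $p(Vx)=p_U(y)$ with linear forms $y_1=\sum_j(1+F_{1j})x_j$ and $y_i=\sum_jF_{ij}x_j$ for $i\ge2$.

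\textbf{Step 2 (main term).} Since $\delta<1$, all coefficients of $y_1$ lie in $[1-\delta,1+\delta]$. Hence the coefficient of $x^\alpha$ in $c_{de_1}y_1^d$ is at least $p(x^*)\binom{d}{\alpha}(1-\delta)^d$.

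\textbf{Step 3 (perturbation).} Every other term $c_\beta y^\beta$ with $\beta\neq de_1$ contains at least one factor $y_i$ with $i\ge2$. I will bound its coefficients entrywise by a majorant obtained by replacing each linear form with the form having the absolute values of its coefficients. The majorant of $y_1$ is dominated by $(1+\delta)\mathbf{1}^Tx$, and that of each $y_i$ ($i\ge 2$) by $\delta\,\mathbf{1}^Tx$. Summing over $\beta\neq de_1$ and using that all multinomial coefficients are at least $1$, the total absolute contribution to the coefficient of $x^\alpha$ is at most
\[
\eta(p)\binom{d}{\alpha}\Big[\big((1+\delta)+(n-1)\delta\big)^d-(1+\delta)^d\Big].
\]
The key point is that every bound carries the same factor $\binom{d}{\alpha}$, so the comparison becomes uniform in $\alpha$.

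\textbf{Step 4 (choice of $\delta$).} It remains to show that, for some $\epsilon>1/p(x^*)$, the two conditions
\[
\frac{1-\delta}{1+\delta}\ge (p(x^*)\epsilon)^{-1/d},\qquad n\delta\le\frac{1}{1+\eta(p)\epsilon}
\]
imply
\[
p(x^*)(1-\delta)^d\ge \eta(p)\Big[(1+n\delta)^d-(1+\delta)^d\Big].
\]
The first condition gives $p(x^*)(1-\delta)^d\ge(1+\delta)^d/\epsilon$. So it suffices to show that the bracket is at most $(1+\delta)^d/(\eta(p)\epsilon)$, i.e.\ that
\[
\Big(\frac{1+n\delta}{1+\delta}\Big)^d\le 1+\frac{1}{\eta(p)\epsilon}.
\]
I would obtain this from the second condition. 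That condition controls $(n-1)\delta/(1+\delta)$ in terms of $1/(1+\eta(p)\epsilon)$, and I would combine it with an elementary inequality of the form $(1+u)^d\le 1+\ldots$. I expect this last calculation to be the main obstacle. The crude majorant in Step 3 may lose a factor depending on $d$, in which case I would sharpen Step 3 instead. The sharper version expands only to first order in the small forms, factoring one $\delta\,\mathbf{1}^Tx$ out of each perturbation term. This gives a bound of the form $\eta(p)\binom{d}{\alpha}\,n\delta\,(1+n\delta)^{d-1}$, which matches the second condition more naturally. Taking the supremum over $\epsilon$ then yields the stated $\delta(p;x^*)$.
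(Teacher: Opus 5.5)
Your Steps 1--2 and the majorant idea in Step 3 match the paper's argument. Your crude estimate also proves the qualitative claim that some $\delta>0$ works, because the perturbation bound tends to $0$ with $\delta$.

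\textbf{The gap.} Neither of your perturbation bounds reaches the explicit threshold $\delta(p;x^*)$, so Step 4 cannot be completed; this is a real failure, not a bookkeeping issue. To see it, take $\delta=\delta(p;x^*)$. The first entry of the min increases in $\epsilon$ and the second decreases, so the supremum is attained where they cross, and there both of your conditions are tight:
\[
p(x^*)(1-\delta)^d=\frac{(1+\delta)^d}{\epsilon},\qquad \eta(p)\epsilon=\frac{1-n\delta}{n\delta}.
\]
\begin{itemize}
\item With your crude bound, the inequality you need becomes $(1+\delta)^d\ge \frac{1-n\delta}{n\delta}\bigl[(1+n\delta)^d-(1+\delta)^d\bigr]$. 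As $\delta\to0$ this reads $1\ge d(n-1)/n$, which is false as soon as $d(n-1)>n$.
\item With your sharpened bound $n\delta(1+n\delta)^{d-1}$, it becomes $(1+\delta)^d\ge(1-n\delta)(1+n\delta)^{d-1}$. To first order this needs $d\ge n(d-2)$, which fails e.g.\ for $n=3$, $d=4$.
\end{itemize}
Small $\delta(p;x^*)$ occurs exactly when $\eta(p)/p(x^*)\gg 1$, since $\delta(p;x^*)<1/\bigl((1+\eta(p)/p(x^*))n\bigr)$. An example is $p=x_1^4+Cx_2^4\in H_{3,4}$ with $x^*=e_1$ and $C$ large. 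So these parameter values genuinely occur.

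\textbf{Source of the loss.} You weight the perturbation monomials by multinomial coefficients, or in the sharpened version bound a complete homogeneous sum by a full power. But each $\beta$ occurs only once in $p_U=\sum_\beta c_\beta y^\beta$.

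\textbf{The missing step} is an unweighted count. Group the $\beta\neq de_1$ by $l=\beta_1$. Each such term contributes at most $\eta(p)\binom{d}{\alpha}(1+\delta)^l\delta^{d-l}$ to the coefficient of $x^\alpha$, and there are $\binom{d-l+n-2}{n-2}\le n^{d-l}$ of them. Summing the geometric series in $n\delta/(1+\delta)$ gives
\[
\eta(p)\binom{d}{\alpha}\sum_{l=0}^{d-1}n^{d-l}(1+\delta)^l\delta^{d-l}\;\le\;\eta(p)\binom{d}{\alpha}(1+\delta)^d\,\frac{n\delta}{1-n\delta}.
\]
Your second condition $n\delta\le 1/(1+\eta(p)\epsilon)$ is exactly equivalent to $\eta(p)\,n\delta/(1-n\delta)\le 1/\epsilon$. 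Combined with $p(x^*)(1-\delta)^d\ge(1+\delta)^d/\epsilon$ from the first condition, this makes every coefficient of $p(Vx)$ nonnegative. This is the paper's route, and the form of $\delta(p;x^*)$ is calibrated precisely to this estimate.
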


\begin{remark}
The polynomial $p(Vx)$ having nonnegative coefficients implies that $p(x)\ge 0$ for all $x\in\cone\{V\}$. In particular, when $V$ is invertible, $\cone\{V\}$ is simplicial and full-dimensional, which provides a proof of nonnegativity of~$p$ over a full-dimensional neighborhood around~$x^*$.
\end{remark}


\begin{proof}
We first consider the special case where $x^*=e_1$. Let $p(x)=\sum_{\alpha:|\alpha|=d}p_{\alpha}x^{\alpha}$. For each multi-index $\beta$ with $|\beta|=d$, we estimate the coefficient of the monomial $x^{\beta}$ in the expansion $p(Vx)=\sum_{\alpha:|\alpha|=d}p_{\alpha}(Vx)^{\alpha}$. 
Let $c(\alpha,\beta)$ denote the coefficient of $x^{\beta}$ arising from the expansion of $(Vx)^{\alpha}$. Then, the coefficient of $x^{\beta}$ in $p(Vx)$ is given by $$\sum_{\alpha:|\alpha|=d}p_{\alpha}c(\alpha,\beta).$$
In the case where $\alpha\neq de_1$ (i.e., $x^{\alpha}\neq x_1^d$), we bound $c(\alpha,\beta)$ by first noting that $$(Vx)^{\alpha}=\prod_{k=1}^n(V_{k1}x_1+\ldots+V_{kn}x_n)^{\alpha_k}.$$ 
Since $\|V-e_1\mathbf{1}^T\|_{F}\le \delta$, it follows that $V_{1j}\in[1-\delta,1+\delta]$ for $j=1,\ldots, n$, and $V_{kj}\in[-\delta,\delta]$ for $k=2,\ldots, n$ and $j=1,\ldots, n$. Hence, $\max_{1\le j\le n}|V_{1j}|\le 1+\delta$ and $\max_{1\le j\le n}|V_{kj}|\le \delta$ for  $k=2,\ldots,n$. 
Therefore, recalling the multinomial theorem, we have
\begin{equation}\label{eq:optfree_b1}
\begin{aligned}
|c(\alpha,\beta)|&\le \sum_{\substack{\gamma_1+\ldots+\gamma_n=\beta \\ |\gamma_i|=\alpha_i,\; i=1,\ldots,n}}\prod_{k=1}^n \left(\max_{1\le j\le n}|V_{kj}|\right)^{\alpha_k}\binom{\alpha_k}{\gamma_k}\\
&\le (1+\delta)^{\alpha_1}\delta^{d-\alpha_1}\sum_{\substack{\gamma_1+\ldots+\gamma_n=\beta \\ |\gamma_i|=\alpha_i,\; i=1,\ldots,n}}\prod_{k=1}^n \binom{\alpha_k}{\gamma_k}\\
&=(1+\delta)^{\alpha_1}\delta^{d-\alpha_1} \binom{d}{\beta},
\end{aligned}    
\end{equation}
The last equality follows from matching the coefficients of $x^{\beta}$ on both sides of the identity $(x_1+\dots+x_n)^d=\prod_{k=1}^n(x_1+\ldots+x_n)^{\alpha_k}$ and employing the multinomial theorem. 

We now turn to the case $\alpha=de_1$, where we have $(Vx)^{\alpha}=(V_{11}x_1+\ldots+V_{1n}x_n)^d$. For any $\delta\le \delta(p;e_1)$, we have $\delta < 1/n\le 1$. Thus, $\min_{1\le j\le n} V_{1j}\ge1-\delta\ge 0$, and
\begin{equation}\label{eq:optfree_b2}
c(\alpha,\beta)\ge \left(\min_{1\le j\le n}V_{1j}\right)^{d}\binom{d}{\beta}\ge (1-\delta)^d\binom{d}{\beta}.
\end{equation}
Combining the bounds~\eqref{eq:optfree_b1} for $\alpha\neq de_1$ and~\eqref{eq:optfree_b2} for $\alpha=de_1$, the coefficient of $x^{\beta}$ in $p(Vx)$ can be bounded from below as
\[\begin{aligned}
\sum_{\alpha:|\alpha|=d}p_{\alpha}c(\alpha,\beta)&=p_{de_1}c(de_1,\beta)+\sum_{\alpha:|\alpha|=d,
\alpha \neq de_1}p_{\alpha}c(\alpha,\beta)\\
&=p(e_1)c(de_1,\beta)+\sum_{l=0}^{d-1}\sum_{\alpha:|\alpha|=d,\alpha_1=l}p_{\alpha}c(\alpha,\beta)\\
&\ge \binom{d}{\beta}\left(p(e_1)(1-\delta)^d-\|p\|_{\infty}\sum_{l=0}^{d-1}\sum_{\alpha:|\alpha|=d,\alpha_1=l}(1+\delta)^{\alpha_1}\delta^{d-\alpha_1}\right)\\
&\ge\binom{d}{\beta}\left(p(e_1)(1-\delta)^d-\|p\|_{\infty}\sum_{l=0}^{d-1}n^{d-l}(1+\delta)^{l}\delta^{d-l}\right)\\
&=\binom{d}{\beta}\left(p(e_1)(1-\delta)^d-\|p\|_{\infty}\left(\frac{1+\delta}{n\delta}-1\right)^{-1}\left((1+\delta)^d-n^{d}\delta^d\right)\right)\\
&\ge\binom{d}{\beta}\left(p(e_1)(1-\delta)^d-\|p\|_{\infty}\left(\frac{1}{n\delta}-1\right)^{-1}(1+\delta)^d\right)\\
\end{aligned}\]
where the second inequality follows from the fact that the number of distinct $n$-dimensional multi-indices $\alpha$ satisfying $|\alpha|=d$ and $\alpha_1=l$ is given by $\binom{d-l+n-2}{n-2}\le n^{d-l}$. 

For any $\epsilon>1/p(e_1)>0$, since
$$\delta\le \frac{1}{(1+\eta(p)\epsilon)n}\le \frac{1}{(1+\|p\|_{\infty}\epsilon)n},$$
we have
\[\sum_{\alpha:|\alpha|=d}p_{\alpha}c(\alpha,\beta)\ge 
\binom{d}{\beta}\left(p(e_1)(1-\delta)^d-\epsilon^{-1}(1+\delta)^d\right).
\]
This expression is nonnegative because
$$\delta\le \frac{(p(e_1)\epsilon)^{1/d}-1}{(p(e_1)\epsilon)^{1/d}+1}.$$
Therefore, the coefficient of $x^{\beta}$ in $p(Vx)$ remains nonnegative for every multi-index $\beta$ with $|\beta|=d$ whenever $\delta\le\delta(p;e_1)$. 
Therefore, the theorem follows for the case where $x^*=e_1$.

For a general point $x^*\in \mbS^{n-1}$, there exists an orthogonal matrix $U\in\reals^{n\times n}$, such that $Ue_1=x^*$. Let  $p_U(x)=p(Ux)$. Then~$p_U$ is a polynomial with $p_U(e_1)=p(x^*)>0$. Applying the statement of the theorem to~$p_U$, we have that for every matrix  $\tilde{V}\in\reals^{n\times n}$ with $\|\tilde{V}-e_1\mathbf{1}^T\|_{F}\le \delta(p_U;e_1)$, the polynomial $p_U(\tilde{V}x)=p(U\tilde{V}x)$ has nonnegative coefficients. Hence, for every matrix $V\in\reals^{n\times n}$ with $\|V-x^*\mathbf{1}^T\|_F\le \delta(p_U;e_1)$, the matrix $U^TV$ satisfies
\[\|U^TV-e_1\mathbf{1}^T\|_{F}=\|U(U^TV-e_1\mathbf{1}^T)\|_F=\|V-x^*\mathbf{1}^T\|_F\le \delta(p_U;e_1).\]
This implies that $p(Vx)=p(U(U^TV)x)$ has nonnegative coefficients. Finally, observe that $\eta(p)$ is unitarily invariant, i.e., $\eta(p)=\eta(p_U)$ for any orthogonal matrix $U$. Consequently,
\[\begin{aligned}
\delta(p_U;e_1)
    &=\sup_{\epsilon>1/p_U(e_1)}\min\left\{\frac{(p_U(e_1)\epsilon)^{1/d}-1}{(p_U(e_1)\epsilon)^{1/d}+1},\frac{1}{(1+\eta(p_U)\epsilon)n}\right\}\\
    &=\sup_{\epsilon>1/p(x^*)}\min\left\{\frac{(p(x^*)\epsilon)^{1/d}-1}{( p(x^*)\epsilon)^{1/d}+1},\frac{1}{(1+\eta(p)\epsilon)n}\right\}\\
    &=\delta(p;x^*),
\end{aligned}\]
which completes the proof.
\end{proof}


\subsection{Proof of the Optimization-Free Disjunctive Positivstellensatz}

With the theorem from the previous subsection in place, we are now ready to prove Theorem~\ref{cor:strictly_pos_new}.
\begin{proof}[Proof of Theorem~\ref{cor:strictly_pos_new}]
We construct 
$\mcV^{1}$ using $2^n$ matrices, each of the form $\left[\pm e_1\, \ldots\, \pm e_n\right]$, so that the conic hulls of their columns correspond to the $2^n$ orthants of $\reals^n$. We describe how to further refine the partitioning of the space by focusing on the nonnegative orthant (the other orthants can be handled analogously by flipping signs). 
The columns of the matrix $\left[e_1\, \ldots\,  e_n\right]$ form the vertices of the unit simplex. 
For $m\ge 2$, we divide each edge of the unit simplex into $m$ equal segments of length $\sqrt{2}/m$ and partition the simplex into $m^{n-1}$ smaller simplices. Taking the conic hulls of these simplices partitions the nonnegative orthant into $m^{n-1}$ simplicial cones, whose extreme rays associate with the columns of $m^{n-1}$ invertible $n\times n$ matrices. Specifically, if $w_1,\ldots,w_n$ are the vertices of one of the smaller simplices, we include the invertible matrix 
$$V=\left[\frac{w_1}{\|w_1\|_2} \ \cdots\ \frac{w_n}{\|w_n\|_2}\right]$$
as a member of the simplicial disjunction $\mcV^m$. Repeating this procedure for each of the $2^n$ orthants, we obtain a total of $m^{n-1}2^n$ matrices that together define $\mcV^m$.

We now estimate how fast the maximum pairwise distance between the columns of each matrix $V$ constructed as above tends to zero as $m\to\infty$. Note that at level $m$, the vertices of the small simplices have the form $\alpha/m$, where $\alpha=(\alpha_1,\ldots,\alpha_n)$ is an $n$-tuple of nonnegative integers such that $\sum_{i=1}^n\alpha_i=m$. Let $\alpha/m$ and $\beta/m$ be two vertices of the same simplex. Since $\|\alpha/m-\beta/m\|_2=\sqrt{2}/m$, we have $\|\alpha-\beta\|_2=\sqrt{2}$. Note that their corresponding columns in $V$ are $\alpha/\|\alpha\|_2$ and $\beta/\|\beta\|_2$. The distance between these two columns can be upper bounded as
\begin{equation}\label{eq:dis_vertices}
   \begin{aligned}
\left\|\frac{\alpha
}{\|\alpha\|_2}-\frac{\beta
}{\|\beta\|_2}\right\|_2&=\left\|\frac{\alpha-\beta}{\|\alpha\|_2}+\beta\left(\frac{1
}{\|\alpha\|_2}-\frac{1
}{\|\beta\|_2}\right)\right\|_2\\
&=\left\|\frac{\alpha-\beta}{\|\alpha\|_2}+\frac{\beta}{\|\beta\|_2}\cdot\frac{\|\beta\|_2-\|\alpha\|_2
}{\|\alpha\|_2}\right\|_2\\
&\le \frac{\|\alpha-\beta\|_2}{\|\alpha\|_2}+\frac{|\|\beta\|_2-\|\alpha\|_2|}{\|\alpha\|_2}\\
&\le \frac{2\|\alpha-\beta\|_2}{\|\alpha\|_2}\le \frac{2\sqrt{2n}}{m},
\end{aligned} 
\end{equation}
where we have used the triangle inequality and the fact that 
\[\|\alpha\|_2^2\ge \frac{1}{n}\left(\sum_{i=1}^n\alpha_i\right)^2=\frac{m^2}{n}.\] 
Let $v\in\mbS^{n-1}$ denote any column of a matrix $V$ constructed as above. Then, it follows from~\eqref{eq:dis_vertices} that $\|V-v\mathbf{1}^T\|_F\le 2\sqrt{2}n/m$. 
Let $p_{\min}=\min_{x\in\mbS^{n-1}}p(x)$, and let $\eta(p)$ and $\delta(p;v)$ be as in Theorem~\ref{thm:degd_homo_new}. Define
\[\tilde{M}(p)\defeq \left\lceil2\sqrt{2}n\left(\sup_{\epsilon>1/p_{\min}}\min\left\{\frac{(p_{\min}\epsilon)^{1/d}-1}{( p_{\min}\epsilon)^{1/d}+1},\frac{1}{(1+\eta(p)\epsilon)n}\right\}\right)^{-1}\right\rceil.\]
It follows that $\|V-v\mathbf{1}^T\|_F\le 2\sqrt{2}n/\tilde{M}(p)
\le \delta(p;v)$ whenever $m\ge \tilde{M}(p)$. By Theorem~\ref{thm:degd_homo_new}, the polynomial $p(Vx)$ has nonnegative coefficients. Since the same argument applies to every matrix $V$ constructed as above, we conclude that $p\in \DiSOSL_{n,d}(\mcV^{m})$ whenever $m\ge \tilde{M}(p)$. 
\end{proof}


Analogous to Algorithm~\ref{alg:uniform_grid}, which is based on Theorem \ref{cor:strictly_pos}, the optimization-free disjunctive Positivstellensatz can be used to construct 
an optimization-free algorithm for certifying polynomial nonnegativity. In practice, instead of uniformly refining each simplicial cone as in the proof, it is more efficient to adaptively refine these cones within a spatial branch-and-bound framework. Even with such a modification, we expect the optimization-free approach to require a significantly larger number of regions to obtain a disjunctive certificate of nonnegativity compared to the disjunctive sum of squares approach based on semidefinite programming (i.e., the approach of Section~\ref{sec:disos_psatz}). Studying these tradeoffs computationally is a direction for future research.

\section{Extensions to Constrained Optimization Problems}
\label{sec:constrained_opt}
In this section, we extend the disjunctive sum of squares method from certifying global nonnegativity of polynomials to certifying nonnegativity over a closed basic semialgebraic set, i.e., a subset of the Euclidean space defined by finitely many polynomial inequalities. 
We present two approaches. The first, discussed in Section~\ref{sec:hierarchy}, builds on a polynomial-time reduction from~\cite{aaa2019} that converts constrained polynomial optimization problems (POPs) into unconstrained ones, to which our disjunctive sum of squares method then applies. The second, given in Section~\ref{sec:copositive}, extends Definition~\ref{def:disjunction} so that the subregions defined by the algebraic disjunction cover the feasible set of interest rather than all of $\reals^n$. We illustrate this approach for the problem of testing matrix copositivity, which reduces to minimizing a quadratic form over the nonnegative orthant.

\subsection{Lower Bounds on the Optimal Values of Polynomial Optimization Problems}\label{sec:hierarchy}
Consider a POP of the form
\begin{equation}\label{eq:general_pop}
	\begin{array}{rl}
    \min\limits_{x\in{\scriptsize\reals^n}} & p(x)\\
		\st & g_j(x)\ge 0, \quad j=1,\ldots,J,
	\end{array}    
\end{equation}
with a compact feasible set. Throughout this section, we let $p^*$ denote the optimal value of problem~\eqref{eq:general_pop}.  
We show that our disjunctive Positivstellens\"atze can be combined with the reduction from~\cite[Theorem 2.1]{aaa2019} to produce a converging hierarchy of lower bounds on $p^*$. 

We first describe the reduction from~\cite{aaa2019}. 
Let $d$ be the smallest even integer larger or equal to the maximum degrees of the polynomials $p$ and $g_1,\ldots,g_J$. Following~\cite[Theorem 2.1]{aaa2019}, one can explicitly construct a form $f_{\gamma}(z)$ in $N\defeq n+J+3$ variables $z$ and of degree $2d$ using a given scalar $\gamma$, the coefficients of $p, g_1,\ldots,g_J$, and an upper bound on the radius of a ball containing the feasible set of problem~\eqref{eq:general_pop}. This construction has the property that $\gamma$ is a strict lower bound on $p^*$ if and only if $f_{\gamma}(z)$ is positive definite. 

Then,~\cite[Theorem 2.4]{aaa2019} shows how to use this reduction to solve problem~\eqref{eq:general_pop}. 
Recall that $H_{n,d}$ denotes the set of homogeneous polynomials in~$n$ variables and degree~$d$, and $\HNN_{n,d}$ denotes the set of nonnegative forms in $H_{n,d}$. 
Consider a sequence of subsets of $H_{n,d}$, denoted by $K_{n,d}^i$ and indexed by $i$, that satisfies the following properties:
\begin{enumerate}[label=(\arabic*)]
    \item $K_{n,d}^i\subseteq \HNN_{n,d}$ for all $i$, and there exists a positive definite form $s_{n,d}\in K^0_{n,d}$,
    \item if $p\in H_{n,d}$ is positive definite, then there exists an integer $i$ such that $p\in K_{n,d}^i$,
    \item $K_{n,d}^i\subseteq K_{n,d}^{i+1}$ for all $i$,
    \item if $p\in K_{n,d}^i$, then $p + \epsilon s_{n,d}\in K_{n,d}^i$ for all $\epsilon\in[0,1]$.
\end{enumerate}

By~\cite[Theorem 2.4]{aaa2019}, 
for any such sequence $K_{n,d}^i$ satisfying properties~(1)--(4), the sequence $l_i$ defined by
\begin{equation}\label{eq:general_hierarchy}
     	\begin{array}{rl}
		l_i \defeq \max\limits_{\gamma\in\reals} & \gamma\\
		\st & f_{\gamma}(z)-\dfrac{1}{i}s_{N,2d}(z)\in K_{N,2d}^i,
	\end{array}  
\end{equation}
is non-decreasing, satisfies $l_i\le p^*$ for all~$i$, and $\lim_{i\to\infty} l_i=p^*$. 
As shown in~\cite{aaa2019}, 
Artin, Reznick, and Poly\'a's Positivstellens\"atze all lead to sequences $K_{n,d}^i$ satisfying properties~(1)--(4).
In Theorem~\ref{thm:hier_pop} below, we show that our disjunctive sum of squares method fits into this framework as well. 
Moreover, in contrast to the above Positivstellens\"atze, our method has the 
advantage that the degrees of the polynomials required to certify membership in $K_{n,d}^i$ remain fixed (and low) throughout the hierarchy.

We recall that $\HSOS_{n,d}$ denotes the set of sos forms in~$n$ variables and degree~$d$.



\begin{theorem}\label{thm:hier_pop}
Fix a dimension $n$ and an even degree $d$. Let $\mcD^m$ be any family of algebraic disjunctions, indexed by $m$, that satisfies the guarantees of Theorem~\ref{cor:strictly_pos}. 
Let $\widehat{K}_{n,d}^0\defeq\HSOS_{n,d}$. For each integer $i\ge 1$, define
    \begin{equation}\label{eq:hier_pop}
      \widehat{K}_{n,d}^i\defeq  \bigcup_{m=1}^i\DiSOS_{n,d}^d\left(\mcD^m\right) .
    \end{equation}
Then, the sequence $\widehat{K}_{n,d}^i\cap H_{n,d}$ satisfies properties~(1)--(4) stated above.
\end{theorem}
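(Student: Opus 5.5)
We verify properties (1)--(4) one at a time for the sets $K^i\defeq\widehat{K}^i_{n,d}\cap H_{n,d}$, taking the positive definite form $s_{n,d}(x)\defeq\|x\|_2^d=(x^Tx)^{d/2}$. This form is sos, so it belongs to $\HSOS_{n,d}=\widehat{K}^0_{n,d}$.

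\textbf{Property (1).} We already observed after Definition~\ref{def:disos} that $\DiSOS^{d}_{n,d}(\mcD)\subseteq\NN_{n,d}$ for every algebraic disjunction $\mcD$. Intersecting with $H_{n,d}$ gives $\DiSOS^d_{n,d}(\mcD^m)\cap H_{n,d}\subseteq\HNN_{n,d}$, and also $\HSOS_{n,d}\subseteq\HNN_{n,d}$. Hence every $K^i\subseteq\HNN_{n,d}$.

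\textbf{Property (2).} This is exactly the guarantee of Theorem~\ref{cor:strictly_pos}. If $p$ is positive definite, then $p\in\DiSOS^d_{n,d}(\mcD^m)$ for some $m$, so $p\in K^i$ for every $i\ge m$.

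\textbf{Property (3).} For $i\ge1$, $K^i\subseteq K^{i+1}$ holds because the union in \eqref{eq:hier_pop} only grows with $i$. For the step $i=0$ to $i=1$, we need $\HSOS_{n,d}\subseteq\DiSOS^d_{n,d}(\mcD^1)$. This holds because any sos $p$ of degree at most $d$ admits the identities \eqref{def:sos_ad} with $s_{k,0}=p$ and $s_{k,j}=0$, as noted in Section~\ref{sec:disos}.

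\textbf{Property (4).} This is the only step needing care. Fix $p\in\DiSOS^d_{n,d}(\mcD^m)$ and $\epsilon\in[0,1]$. In each of the $r$ identities \eqref{def:sos_ad}, replace $s_{k,0}$ by $s_{k,0}+\epsilon\|x\|_2^d$ and keep the other multipliers. The new $s_{k,0}$ is still sos, still of degree at most $d$, and the identities now certify $p+\epsilon s_{n,d}$. So $\DiSOS^d_{n,d}(\mcD^m)$ is closed under adding $\epsilon s_{n,d}$. The same holds for $\HSOS_{n,d}$. Homogeneity is preserved, since both $p$ and $s_{n,d}$ are forms of degree $d$, so $p+\epsilon s_{n,d}\in K^i$.

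The only subtlety is bookkeeping. We must check that $s_{n,d}$ lies in $K^0$ as property (1) requires, and that the base step of (3) uses the inclusion $\HSOS\subseteq\DiSOS$. With these checked, all four properties hold.
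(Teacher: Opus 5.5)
Your proof is correct and follows essentially the same route as the paper: the containments $\SOS_{n,d}\subseteq\DiSOS^d_{n,d}(\mcD^m)\subseteq\NN_{n,d}$ give (1) and the $i=0$ to $i=1$ step of (3), Theorem~\ref{cor:strictly_pos} gives (2), and absorbing $\epsilon\|x\|_2^d$ into the free sos term $s_{k,0}$ of each identity gives (4). The only minor difference is that the paper phrases (4) using the fact that each subset of $\mcD^m$ contains a single form, whereas your argument shows this structure is not needed.
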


\begin{proof}
We verify properties~(1)--(4). 
\begin{enumerate}[label=(\arabic*)]
        \item When $i=0$, $\widehat{K}_{n,d}^0=\HSOS_{n,d}\
        \subseteq \HNN_{n,d}$. 
        Let $i\ge 1$.         
        Since $\SOS_{n,d}\subseteq\DiSOS^d_{n,d}(\mcD^m)\subseteq\NN_{n,d}$ for all $m$, we have
        $\HSOS_{n,d}\subseteq\DiSOS^d_{n,d}(\mcD^m)\cap H_{n,d}\subseteq\HNN_{n,d}$ for all $m$. Taking unions gives 
        $\HSOS_{n,d}\subseteq \widehat{K}_{n,d}^i\cap H_{n,d}\subseteq \HNN_{n,d}$ for all $i\ge 1$. Moreover, the positive definite form~$\|x\|_2^d\in\HSOS_{n,d}=\widehat{K}_{n,d}^0$.
        \item By Theorem~\ref{cor:strictly_pos}, for any positive definite form~$p$, there exists an integer $\bar{m}$ such that $p\in \DiSOS^d_{n,d}(\mcD^{\bar{m}})$, and therefore $p\in\widehat{K}_{n,d}^{\bar{m}}$.
        \item The sequence $\widehat{K}_{n,d}^i$ is nested by construction.        
        \item When $i=0$, the property holds trivially. 
        When $i\ge 1$, suppose $p\in\widehat{K}_{n,d}^i$. Then, there exists $m\in\{1,\ldots,i\}$ such that $p\in\SOS_{n,d}^d(\mcD^m)$. 
        Recall from Theorem~\ref{cor:strictly_pos} that each subset in $\mcD^m$ contains exactly one form, i.e., we can write
        \[\mcD^m=\big\{\{q_{j}(x)\},\quad j=1,\ldots,r_m\big\}.\]
 For each $j\in\{1,\ldots,r_m\}$, 
        there exist sos polynomials $s_{j}$ and $\sigma_{j}$ such that
        \[p(x)=s_{j}(x)+\sigma_{j}(x)q_{j}(x),\]
        with $\deg s_{j}\le d$ and $\deg (\sigma_{j}q_{j})\le d$. 
        Consequently, 
        \[p(x) + \epsilon \|x\|_2^d=(s_{j}(x)+\epsilon \|x\|_2^d)+ \sigma_{j}(x)q_{j}(x).\]
        Since $\|x\|_2^d$ is sos, the polynomial $s_{j}(x)+\epsilon \|x\|_2^d$ remains sos for all $\epsilon\in[0,1]$ and $j=1,\ldots,r_m$. Therefore, 
        $p(x) + \epsilon \|x\|_2^d\in \SOS_{n,d}^d(\mcD^m)\subseteq\widehat{K}_{n,d}^i$  for all $\epsilon\in[0,1]$.
    \end{enumerate}
\vspace{-\baselineskip}
\end{proof}
Since the sequence $\widehat{K}_{n,d}^i$ satisfies properties~(1)--(4), replacing\footnote{
To compute the optimal value $l_i$ of problem~\eqref{eq:general_hierarchy} with $K_{N,2d}^i$ replaced by $\widehat{K}_{N,2d}^i$, one can take the maximum between $l_{i-1}$ and the optimal value of problem~\eqref{eq:general_hierarchy} with $K_{N,2d}^i$ replaced by $\DiSOS^{2d}_{N,2d}(\mcD^i)$. As discussed earlier, a membership constraint to $\DiSOS^{2d}_{N,2d}(\mcD^i)$ amounts to solving a semidefinite program.
} $K_{N,2d}^i$ with $\widehat{K}_{N,2d}^i$ in~\eqref{eq:general_hierarchy} and applying~\cite[Theorem 2.4]{aaa2019} yields a non-decreasing sequence $l_i$ that satisfies 
$l_i\le p^*$ for all $i$ and $\lim_{i\to\infty} l_i=p^*$. 
Furthermore, one can show that 
 the theorem remains valid if the cones $\DiSOS_{n,d}^d(\mcD^m)$ in~\eqref{eq:hier_pop} are replaced by the cones $\DiSOSL_{n,d}(\mcV^m)$ introduced in Theorem~\ref{cor:strictly_pos_new}, yielding an optimization-free hierarchy for general POPs with a compact feasible set.

\subsection{An Application to Copositive Programming}\label{sec:copositive}
In this section, we give an example of how the disjunctive sum of squares approach can be adapted to certify nonnegativity over a subset of the Euclidean space. We take this subset to be the nonnegative orthant, which arises naturally in copositive programming.

A symmetric matrix $Q \in \reals^{n \times n}$ is said to be \emph{copositive} if $x^T Q x \geq 0$ for all $x \geq 0$. Let $\mcC_n$ denote the cone of $n\times n$ copositive matrices. A copositive program is a linear optimization problem over the intersection of $\mcC_n$ with an affine subspace. 
In recent years, copositive programming has found many applications in both discrete and continuous optimization; see, e.g.,~\cite{burer2012copositive,dur2010copositive}. 
Since checking copositivity of a given matrix is NP-hard~\cite{murty1987some}, tractable inner approximations to $\mcC_n$ are of interest in most applications of copositive programming. 

A well-known SDP-based sufficient condition for copositivity is the so-called {``$P+N$'' criterion}. For a symmetric matrix $A \in \reals^{n \times n}$, let us use the standard notation $A \succeq 0$ (resp. $A \geq 0$) to denote that $A$ is positive semidefinite (resp. elementwise nonnegative). Observe that if a matrix $Q\in\reals^{n\times n}$ can be written as $Q = P + N$, where $P \succeq 0$ and $N \geq 0$, then $Q$ is copositive. 
This condition is known to also be necessary for copositivity if $n\le 4$~\cite{Diananda1962}. For $n\ge 5$, however, there are copositive matrices that do not admit a $P+N$ decomposition. A classical example is the Horn matrix~\cite{Diananda1962}:
\begin{equation}\label{eq:horn}
  H=\begin{bmatrix}
    \phantom{-}1 & -1 & \phantom{-}1 & \phantom{-}1 & -1 \\
    -1 & \phantom{-}1 & -1 & \phantom{-}1 & \phantom{-}1 \\
    \phantom{-}1 & -1 & \phantom{-}1 & -1 & \phantom{-}1 \\
    \phantom{-}1 & \phantom{-}1 & -1 & \phantom{-}1 & -1 \\
    -1 & \phantom{-}1 & \phantom{-}1 & -1 & \phantom{-}1 \\
  \end{bmatrix}.
\end{equation}


To bridge the gap between the $P+N$ criterion and copositivity, a complete SDP-based hierarchy of sufficient conditions
for copositivity was introduced in~\cite{Parrilo2000}.
Let $x.^2\defeq(x_1^2,\ldots,x_n^2)$. At level $m$ of this hierarchy, one checks if the polynomial $(x.^2)^TQ(x.^2)\cdot(x_1^2 + \cdots + x_n^2)^m$ is sos, and if so, copositivity is certified. The zeroth level of this hierarchy recovers exactly the $P+N$ criterion described above~\cite{Parrilo2000}.
Moreover, this hierarchy is complete: for any strictly copositive matrix $Q$ (i.e., a matrix $Q$ satisfying $x^TQx> 0$ for all $x\ge 0$ with $x\neq 0$), there exists a level $m$ at which copositivity is certified~\cite{Parrilo2000, reznick1995uniform}.
However, the size of the semidefinite constraint at level $m$ of this hierarchy grows as $\binom{n+m+2}{n} \times \binom{n+m+2}{n}$. While one can reduce this size using symmetry reduction techniques~\cite[Section 8.1]{Gatermann2004}, the higher levels of this hierarchy often remain too expensive to solve. 

Here, we utilize the disjunctive sos approach to propose an alternative SDP-based complete hierarchy for certifying copositivity of an $n\times n$ matrix, where the size of the semidefinite constraint remains $n\times n$ throughout the hierarchy. To this end, we extend the notion of a simplicial disjunction in Definition~\ref{def:disos_new} and require the subregions to cover only the nonnegative orthant rather than all of $\reals^n$.

\begin{definition}\label{def:sd_simplex}
Let $r$ be a positive integer. We say that a collection of $n\times n$ invertible matrices $\mcV=\{V_k\}_{k=1,\ldots,r}$ forms a \emph{simplicial disjunction} of a region $\Omega\subseteq\reals^n$ if 
\[\Omega\subseteq \bigcup\limits_{k=1}^r\Omega_k,\quad \text{where }\quad\Omega_k=\cone\{V_k\}\defeq\cone\{v_{k,1}, \ldots, v_{k,n}\},\]
and $v_{k,j}$ is the $j$-th column of $V_k$.
\end{definition}

\begin{definition}\label{def:copos_disos}
A symmetric matrix $Q\in\reals^{n\times n}$ is \emph{disjunctive $P+N$ with respect to a simplicial disjunction} $\mcV=\{V_k\}_{k=1,\ldots,r}$ of the nonnegative orthant if there exist matrices $P_k$ and $N_k$, for $k=1,\ldots,r$, such that
\[V_k^TQV_k=P_k+N_k,\;P_k\succeq 0,\; N_k\ge 0,\quad k=1,\ldots,r.\]
We denote the set of such matrices by $\mcPN_{n}(\mcV)$. 
\end{definition}


Observe that for any simplicial disjunction $\mcV$ of the nonnegative orthant, the set $\mcPN_n(\mcV)$ is a semidefinite representable convex cone. Moreover, it is straightforward to see that $\mcPN_n(\mcV)\subseteq\mcC_n$. Indeed, 
if $Q\in\mcPN_n(\mcV)$,  
the polynomial $y^TV_k^TQV_ky$ is nonnegative for all $y\ge 0$ and $k=1,\ldots,r$. This implies that $x^TQx$ is nonnegative on each subregion $\Omega_k=\cone\{V_k\}$ as any point $x\in\cone\{V_k\}$ can be written as $x=V_ky$ with $y\ge 0$. Since these subregions cover the nonnegative orthant, we have $x^TQx\ge 0$ 
for all $x\ge 0$, and hence $Q$ is copositive.

Let us revisit the Horn matrix $H$ in~\eqref{eq:horn} in view of Definition~\ref{def:copos_disos}. 
Consider the simplicial disjunction $\mcV=\{V_1,V_2\}$ of the nonnegative orthant, where
\[V_1=\begin{bmatrix}
     1   & 0&    0&     0&   0\\
    0 &    1&     0&    0&     0\\
   0&   0&     1&   0&     0\\
    0 &   0&    0&     1&    0.5\\
    0 &    0&     0&    0&     0.5\\
\end{bmatrix}, \quad 
V_2=\begin{bmatrix}     
     1   & 0&    0&     0&   0\\
    0 &    1&     0&    0&     0\\
   0&   0&     1&   0&     0\\
    0 &   0&    0&     0.5&    0\\
    0 &    0&     0&    0.5&     1\\ \end{bmatrix}.\]
One can readily verify that
\[
V_1^THV_1=\begin{bmatrix}
    \phantom{-}1 & -1            & \phantom{-}1  & \phantom{-}1  & \phantom{-}0 \\
    -1            & \phantom{-}1  & -1            & \phantom{-}1  & \phantom{-}1 \\
    \phantom{-}1 & -1            & \phantom{-}1  & -1            & \phantom{-}0 \\
    \phantom{-}1 & \phantom{-}1  & -1            & \phantom{-}1  & \phantom{-}0 \\
    \phantom{-}0 & \phantom{-}1  & \phantom{-}0  & \phantom{-}0  & \phantom{-}0 \\
\end{bmatrix}=\begin{bmatrix}
    \phantom{-}1 & -1            & \phantom{-}1  & -1            & \phantom{-}0 \\
    -1            & \phantom{-}1  & -1            & \phantom{-}1  & \phantom{-}0 \\
    \phantom{-}1 & -1            & \phantom{-}1  & -1            & \phantom{-}0 \\
    -1            & \phantom{-}1  & -1            & \phantom{-}1  & \phantom{-}0 \\
    \phantom{-}0 & \phantom{-}0  & \phantom{-}0  & \phantom{-}0  & \phantom{-}0 \\
\end{bmatrix}+\begin{bmatrix}
     0 & 0 & 0 & 2 & 0 \\
     0 & 0 & 0 & 0 & 1 \\
     0 & 0 & 0 & 0 & 0 \\
     2 & 0 & 0 & 0 & 0 \\
     0 & 1 & 0 & 0 & 0 \\
\end{bmatrix},
\]
\[
V_2^THV_2=\begin{bmatrix}
    \phantom{-}1 & -1            & \phantom{-}1  & \phantom{-}0  & -1            \\
    -1            & \phantom{-}1  & -1            & \phantom{-}1  & \phantom{-}1 \\
    \phantom{-}1 & -1            & \phantom{-}1  & \phantom{-}0  & \phantom{-}1 \\
    \phantom{-}0 & \phantom{-}1  & \phantom{-}0  & \phantom{-}0  & \phantom{-}0 \\
    -1            & \phantom{-}1  & \phantom{-}1  & \phantom{-}0  & \phantom{-}1 \\
\end{bmatrix}=\begin{bmatrix}
    \phantom{-}1 & -1            & \phantom{-}1  & \phantom{-}0  & -1            \\
    -1            & \phantom{-}1  & -1            & \phantom{-}0  & \phantom{-}1 \\
    \phantom{-}1 & -1            & \phantom{-}1  & \phantom{-}0  & -1            \\
    \phantom{-}0 & \phantom{-}0  & \phantom{-}0  & \phantom{-}0  & \phantom{-}0 \\
    -1            & \phantom{-}1  & -1            & \phantom{-}0  & \phantom{-}1 \\
\end{bmatrix}+\begin{bmatrix}
     0 & 0 & 0 & 0 & 0 \\
     0 & 0 & 0 & 1 & 0 \\
     0 & 0 & 0 & 0 & 2 \\
     0 & 1 & 0 & 0 & 0 \\
     0 & 0 & 2 & 0 & 0 \\
\end{bmatrix}.
\]
Let $e_1,\ldots,e_5$ be the standard basis vectors in $\reals^5$. The first identity proves nonnegativity of $x^THx$ over $\cone\{V_1\}=\cone\{e_1,e_2,e_3,e_4,(e_4+e_5)/2\}$. Similarly, the second identity proves nonnegativity of $x^THx$ over $\cone\{V_2\}=\cone\{e_1,e_2,e_3,(e_4+e_5)/2,e_5\}$. 
Together, these two identities certify that the Horn matrix $H\in\mcPN_5(\mcV)$ and is hence copositive. 

We show in Theorem~\ref{thm:psatz_copositive_full} below that one can a priori fix a family of simplicial disjunctions of the nonnegative orthant such that any strictly copositive matrix is disjunctive $P+N$ with respect to some simplicial disjunction in the family. 
In fact, the proof shows that the disjunctive proofs do not necessarily need to involve positive semidefinite matrices; that is, one can set $P_k = 0$ in Definition~\ref{def:copos_disos}. This result follows as a simple corollary of what we have proven in the previous section.


In the proof of the following theorem, we use the standard notation~$\textcolor{black}{\Delta^{n-1}}$ to denote the unit simplex in~$\reals^n$, i.e., $\textcolor{black}{\Delta^{n-1}}=\{x\in\reals^n\mid \sum_{i=1}^n x_i=1,\; x\ge 0\}$, and we let $\conv\{V\}$ denote the convex hull of the columns of a matrix
$V\in\reals^{n\times n}$.

\begin{theorem}[Disjunctive Positivstellensatz for Matrix Copositivity]
\label{thm:psatz_copositive_full}
For any fixed dimension $n$, there exists an explicit family of simplicial disjunctions
$\mcV^m$ of the nonnegative orthant, indexed by $m$, such that for any strictly 
copositive matrix $Q \in \reals^{n \times n}$, we have $Q \in \mcPN_{n}(\mcV^m)$ 
for some $m$.
\end{theorem}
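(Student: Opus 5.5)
We reuse the construction from the proof of Theorem~\ref{cor:strictly_pos_new}, restricted to the nonnegative orthant, and set $P_k=0$. For $m\ge 1$, subdivide the unit simplex $\Delta^{n-1}$ into $m^{n-1}$ smaller simplices by cutting each edge into $m$ equal segments. For each small simplex with vertices $w_1,\ldots,w_n$, include the invertible matrix
\[
V=\left[\frac{w_1}{\|w_1\|_2}\ \cdots\ \frac{w_n}{\|w_n\|_2}\right]
\]
in $\mcV^m$. The cones $\cone\{V\}$ cover $\conv$-preimages of all small simplices, hence the nonnegative orthant, so $\mcV^m$ is a simplicial disjunction of the orthant in the sense of Definition~\ref{def:sd_simplex}. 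It then suffices to show that for $m$ large, every $V\in\mcV^m$ satisfies $V^TQV\ge 0$ entrywise. Taking $N_k=V_k^TQV_k$ and $P_k=0$ then gives $Q\in\mcPN_n(\mcV^m)$.

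The key estimate is as follows. Since all columns $v_i$ of any $V\in\mcV^m$ lie in the nonnegative part of $\mbS^{n-1}$, strict copositivity plus compactness give
\[
\mu\defeq\min\{x^TQx : x\ge 0,\ \|x\|_2=1\}>0 .
\]
The entries of $V^TQV$ are $v_i^TQv_j$. Write $v_j=v_i+(v_j-v_i)$. Then
\[
v_i^TQv_j=v_i^TQv_i+v_i^TQ(v_j-v_i)\ge \mu-\|Q\|_2\,\|v_j-v_i\|_2 .
\]
By~\eqref{eq:dis_vertices}, any two columns of the same $V$ satisfy $\|v_i-v_j\|_2\le 2\sqrt{2n}/m$. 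Here one checks that the bound extends to vertices that are not adjacent: vertices of one small simplex differ by at most $\sqrt2$ in the scaled integer lattice, and $\|\alpha\|_2\ge m/\sqrt n$, so the same computation applies. Hence all entries are nonnegative once
\[
m\ge \frac{2\sqrt{2n}\,\|Q\|_2}{\mu}.
\]
Alternatively, Theorem~\ref{thm:degd_homo_new} with $d=2$ could be invoked directly, but it needs $p$ positive near $x^*$ on the whole sphere. The direct bilinear estimate above avoids this, because it only evaluates $Q$ at points in the orthant.

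The main point to check carefully is exactly this issue: strict copositivity only gives positivity of $x^TQx$ on the orthant, not on a full neighborhood of each column. So the argument must stay within the orthant, which the direct estimate $v_i^TQv_j\ge\mu-\|Q\|\,\|v_i-v_j\|$ does, since $v_i$ itself is a nonnegative unit vector. Beyond that, the only verification needed is the covering property and the uniform diameter bound on the normalized simplices. Both are inherited verbatim from the proof of Theorem~\ref{cor:strictly_pos_new}.
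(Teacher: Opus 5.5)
Your proof is correct, but it proves $V^TQV\ge 0$ by a different route than the paper.

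\textbf{How the two routes differ.} The paper uses the unnormalized simplex vertices as columns. It obtains entrywise nonnegativity as a corollary of Theorem~\ref{thm:degd_homo_new} with $d=2$, applied to $q(x)=x^TQx$ at $x^*/\|x^*\|_2$ for every $x^*\in\Delta^{n-1}$. It then sets $\delta_{\min}=\inf_{x^*\in\Delta^{n-1}}\|x^*\|_2\,\delta(q;x^*/\|x^*\|_2)>0$ and takes $m\ge\lceil\sqrt{2n}/\delta_{\min}\rceil$. You instead use the fact that, for a quadratic form, the coefficients of $q(Vx)$ are just the bilinear quantities $v_i^TQv_j$. The one-line perturbation bound $v_i^TQv_j\ge\mu-\|Q\|_2\|v_i-v_j\|_2$ then finishes the job. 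Normalizing the columns is harmless, since positive column scaling changes neither $\cone\{V\}$ nor the sign pattern of $V^TQV$.

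\textbf{What each route buys.} Yours is more elementary. It bypasses the rather opaque function $\delta(\cdot;\cdot)$ and the positivity of $\delta_{\min}$, and it gives an explicit threshold in terms of $\mu$ and $\|Q\|_2$. The paper's route buys economy: copositivity becomes a direct specialization of machinery that works for forms of every even degree.

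\textbf{Your reason for avoiding Theorem~\ref{thm:degd_homo_new} is mistaken.} That theorem only assumes $p(x^*)>0$ at the single point $x^*$. Positivity on a neighborhood then follows by continuity, even when $x^*$ lies on the boundary of the orthant. Strict copositivity supplies exactly this at every normalized point of $\Delta^{n-1}$, so the paper's application is legitimate.

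\textbf{A shared inaccuracy in the diameter bound.} The paper claims the small simplices have edge length $\sqrt2/m$, and you claim two vertices of one small simplex always differ by a lattice vector of norm $\sqrt2$. For $n\ge4$ this is false. In the Kuhn/Freudenthal triangulation, the standard way to realize the $m^{n-1}$ subdivision, differences such as $e_1-e_2+e_3-e_4$ occur. For $n=4$, $m=2$ this is the diagonal of the central octahedron, joining $\tfrac12(1,0,1,0)$ and $\tfrac12(0,1,0,1)$ at distance $1>\sqrt2/2$.

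This is harmless, because only an $O(1/m)$ diameter bound is needed. In that triangulation every vertex difference $\alpha-\beta$ within one small simplex has norm at most $\sqrt n$. The computation in~\eqref{eq:dis_vertices} then gives $\|v_i-v_j\|_2\le 2n/m$, so $m\ge 2n\|Q\|_2/\mu$ suffices.
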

\begin{proof}
The construction of the family of simplicial disjunctions $\mcV^m$ is similar to the construction for Theorem~\ref{cor:strictly_pos_new}. We let $\mcV^1=\{I_n\}$. For $m\ge 2$, we divide each edge of the unit simplex $\Delta^{n-1}$ into $m$ equal segments of length $\sqrt{2}/m$ and partition the simplex into $m^{n-1}$ smaller simplices. For each smaller simplex, we form a matrix whose columns are the vertices of this simplex and include this matrix as a member of $\mcV^m$. 

Let $q(x)\defeq x^TQx$. 
Since $q$ is homogeneous and $Q$ is strictly copositive, we have $q(x^*/\|x^*\|_2) > 0$ for all $x^* \in \textcolor{black}{\Delta^{n-1}}$. Letting $\delta(\cdot;\cdot)$ be defined as in Theorem~\ref{thm:degd_homo_new}, it follows that $\delta(q;\, x^*/\|x^*\|_2) > 0$ for all
$x^* \in \textcolor{black}{\Delta^{n-1}}$. It is straightforward to check that $\delta_{\min}\defeq \inf_{x^* \in \textcolor{black}{\Delta^{n-1}}}
\|x^*\|_2\,\delta(q; x^*/\|x^*\|_2) > 0$. 
For any integer $m \geq \lceil\sqrt{2n}/\delta_{\min}\rceil$ and any matrix $V \in \mcV^m$, $\conv\{V\}$ is a polytope with edge length $\sqrt{2}/m$, so for any column $v \in \textcolor{black}{\Delta^{n-1}}$ of $V$, we have
\[
\|V - v\mathbf{1}^T\|_F \leq \sqrt{n} \cdot \frac{\sqrt{2}}{m} \leq \delta_{\min}
\leq \|v\|_2\,\delta(q; v/\|v\|_2).
\]
Thus, $\left\|V/\|v\|_2 - (v/\|v\|_2)\mathbf{1}^T\right\|_F \leq \delta(q; v/\|v\|_2)$, and by Theorem~\ref{thm:degd_homo_new}, the polynomial $q(V x/\|v\|_2) = x^TV^TQVx/\|v\|_2^2$ 
has nonnegative coefficients, hence $V^TQV \geq 0$.
Since the same argument holds for each matrix in $\mcV^m$, we conclude that 
$Q \in \mcPN_n(\mcV^m)$.
\end{proof}

\begin{remark}
We make two observations regarding Theorem~\ref{thm:psatz_copositive_full} and Definition~\ref{def:copos_disos}:
\begin{enumerate}
\item [(i)]
By the construction in the proof of Theorem~\ref{thm:psatz_copositive_full}, the first simplicial disjunction $\mcV^1$ contains only the identity matrix, 
and hence checking membership in $\mcPN_n(\mcV^1)$ reduces exactly to the $P+N$ test.
Moreover, the tests for copositivity in the higher levels of the hierarchy all dominate the $P+N$ test. Indeed, consider an arbitrary integer $m\ge 1$. If $Q = P + N$ with $P \succeq 0$ and $N \geq 0$, then for each matrix $V$ in $\mcV^m$, $V^T Q V = V^T P V + V^T N V$, where $V^T P V \succeq 0$, and $V^T N V \geq 0$ as the entries of $V$ are nonnegative by construction. Hence, $Q \in \mcPN_n(\mcV^m)$.

\item [(ii)]
Setting $P_k = 0$ in Definition~\ref{def:copos_disos} reduces the condition 
$V_k^T Q V_k= P_k + N_k$ to $V_k^T Q V_k~\geq~0$, 
which coincides with the sufficient condition for copositivity in~\cite[Theorem 1]{BUNDFUSS2008}.
However, by doing so, no level of the resulting hierarchy would dominate the $P+N$ test. See also Section~\ref{sec:num_cop} for some related comparisons in numerical experiments.
\end{enumerate}
\end{remark}

\section{A Spatial Branch-and-Bound Framework}\label{sec:bnb}

While the Positivstellens\"atze in previous sections show that one can always find low-degree disjunctive sos proofs of nonnegativity of a polynomial by uniformly dividing the space, in practice it is more efficient to perform this division \emph{adaptively}. In this section, we describe how this can be done by integrating our framework into a spatial branch-and-bound (SBB) algorithm.
We present SBB algorithms to certify polynomial nonnegativity and matrix copositivity in Section~\ref{sec:adaptive_grid} and Section~\ref{sec:bnb_simplex}, respectively, and evaluate their numerical performance in Section~\ref{sec:num}. 

\subsection{A SBB Framework for Polynomial Minimization}
\label{sec:adaptive_grid}
In this section, we apply a branch-and-bound algorithm to approximate the minimum value $p_{\min}$ of a form $p\in H_{n,d}$ over the unit sphere $\mbS^{n-1}$ by adaptively dividing the sphere into subregions using simplicial disjunctions.
{We use simplicial disjunctions because it is straightforward to partition a simplicial cone into two smaller ones.
}
We consider two choices for the initial simplicial disjunction:
\begin{enumerate}[label=(\arabic*)]
    \item $2^{n-1}$ matrices, each of the form\footnote{Since even-degree homogeneous polynomials are symmetric with respect to the origin, it suffices to certify nonnegativity on a hemisphere.} $\begin{bmatrix}\pm e_1 & \ldots & \pm e_{n-1} & e_n\end{bmatrix}$.
    \item $n+1$ matrices, constructed as follows: Consider $n+1$ vertices of a regular $n$-simplex
\[c_i=\sqrt{1+n^{-1}}e_i-n^{-3/2}(\sqrt{n+1}-1)\mathbf{1},\quad i=1,\ldots,n,\quad c_{n+1}=-n^{-1/2}\mathbf{1}.\]
Each matrix is formed by taking $n$ of the $n+1$ vectors above as its columns.
\end{enumerate}

\begin{figure}[!tb]
    \centering
    \includegraphics[width=0.85\textwidth]{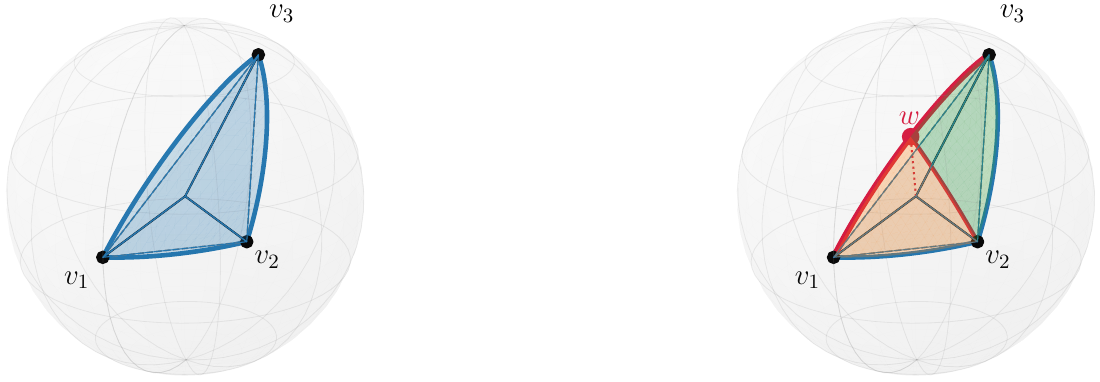}
    \caption{Branching step for $\cone\{V\}=\cone\{v_1,v_2,v_3\}$. Left: $\mcQ=\cone\{V\}\cap\mbS^{2}$ is the spherical triangle bounded by geodesic arcs (solid); planar chords of~$\cone\{V\}$ shown dashed. Right: the longest arc~$(v_1,v_3)$ is bisected at its geodesic midpoint~$w$, splitting~$\mcQ$ into~$\mcQ^+$ and~$\mcQ^-$.}
    \label{fig:branching}
\end{figure}

\paragraph{Bounding.}
Each subregion of the sphere takes the form $\mcQ = \cone\{V\} \cap \mbS^{n-1}$ for some invertible
matrix $V\in \reals^{n \times n}$. We compute {a} lower bound on $p$ over $\mcQ$ by solving
\begin{equation}\label{eq:poly_bnb}
	\phi(V) \defeq \begin{array}[t]{cl}
		\max\limits_{\gamma\in\reals} & \gamma\\
		\st & p(V(x.^2))-\gamma\|V(x.^2)\|_2^d\in\HSOS_{n,2d}.
	\end{array}
\end{equation} 
Note that any feasible $\gamma$ in problem~\eqref{eq:poly_bnb} is a valid lower bound on~$p$ over~$\mcQ$. 
{Indeed, any point $z\in\cone\{V\}$ can be written as $z=V(x.^2)$ for some $x\in\reals^n$, so the constraint in~\eqref{eq:poly_bnb} implies $p(z)-\gamma\|z\|_2^d\ge 0$ for all $z\in\cone\{V\}$. In particular, $p(z)-\gamma\ge 0$ for all $z\in\cone\{V\} \cap \mbS^{n-1}$.} 

{To obtain an upper bound on~$p$ over~$\mcQ$, we run $K$ projected gradient descent steps (where $K$ is chosen by the user): starting from a column of~$V$, we iterate}
\begin{equation}\label{eq:pgd_poly}
   y^k=\Pi_{\cone\{V\}}(x^k - \beta \nabla p(x^k)),\quad x^{k+1} = \frac{y^k}
    {\|y^k\|_2},\quad k=0,\ldots,K-1,
\end{equation}
where $\Pi_{\cone\{V\}}$ denotes the projection onto $\cone\{V\}$ and $\beta>0$ is a chosen stepsize.


\paragraph{Branching.}
At each iteration, we select a subregion
\begin{equation*}
\mcQ=\cone\{V\}\cap\mbS^{n-1}
\end{equation*}
 with the smallest lower bound, where $V = [v_1 \,\ldots\, v_n] \in\reals^{n\times n}$ is some invertible matrix. Letting $(i,j)\in \operatornamewithlimits{argmax}_{1 \le k, l \le n} \|v_k-v_l\|_2$, we 
set~$w= (v_{i}+v_{j})/\|v_{i}+v_{j}\|_2$ and form two further subregions
\[
    \mcQ^+=\cone\{V^+\}\cap \mbS^{n-1},\quad \mcQ^-=\cone\{V^-\}\cap \mbS^{n-1},\]
where $V^+=[v_{1}\, \ldots\, v_{i-1}\, v_{i+1}\, \ldots\, v_{n}\, w]$ and $V^-=[v_{1}\, \ldots\, v_{j-1}\, v_{j+1}\, \ldots\, v_{n}\, w]$.
{
Figure~\ref{fig:branching} illustrates an example of this construction when $n=3$.}

\textcolor{black}{The 
overall branch-and-bound scheme is presented in Algorithm~\ref{alg:bnb}. 
One can utilize Theorem~\ref{thm:degd_homo_new} together with convergence results for branch-and-bound methods (see, e.g.~\cite[Theorem~5.26]{locatelli2013global} or~\cite[Chapter~IV]{horst_global_1996}) to show that for any prescribed tolerance $\epsilon_{\text{tol}}>0$, Algorithm~\ref{alg:bnb} is guaranteed to terminate.}

\begin{algorithm}[H]
\caption{Spatial Branch-and-Bound Algorithm for Minimizing a Form}
\label{alg:bnb}
\begin{algorithmic}
\Require A polynomial $p\in H_{n,d}$, a tolerance $\epsilon_{\text{tol}}>0$.
\Ensure A lower bound $L$ on the minimum value of $p$ over $\mbS^{n-1}$ and a point $\bar{x}\in\mbS^{n-1}$ such that 
$p(\bar{x})-L\le \epsilon_{\text{tol}}(1+|L|+|p(\bar{x})|)$.
\Initialization Generate an initial simplicial disjunction $\{V_i\}_{i=1,\ldots,r_0}$.
\State $\mcT\gets\{V_i\mid i=1,\ldots,r_0\}$. 
\Comment{Create a branch-and-bound tree}
\State $l_i\gets \phi(V_i)$, $\quad i=1,\ldots,r_0$. \Comment{Local lower bound computed by solving~\eqref{eq:poly_bnb}}

\State $u_i\gets\min_{1\le j\le n}\ p(V_ie_j)$, $\quad i=1,\ldots,r_0$. \Comment{Local upper bound}
\State $L\gets\min_{1\le i\le r_0}\{l_i\},\quad U\gets\min_{1\le i\le r_0}\{u_i\}$. 
\Comment{Global lower and upper bounds}
\While{$U-L> \epsilon_{\text{tol}}(1+|L|+|U|)$}
\State $V\gets \argmin_{\hat{V}\in \mcT} \phi(\hat{V}),\quad \mcT\gets \mcT\setminus\{V\}$. 
\State $v_1, \ldots, v_n \gets$ columns of $V$.
\State $(i, j) \gets \operatornamewithlimits{argmax}_{1 \le k, l \le n} \|v_k-v_l\|_2,\quad w \gets (v_i + v_j)/\|v_i + v_j\|_2$.
\State $V^+ \gets [v_1\, \ldots\, v_{i-1}\, v_{i+1}\, \ldots\, v_n\, w],\quad V^- \gets [v_1\, \ldots\, v_{j-1}\, v_{j+1}\, \ldots\, v_n\, w]$. \Comment{Branch}
\State $\mcT \gets \mcT \cup \{V^+,V^-\}$.
\State $L\gets \min_{\hat{V}\in \mcT} \phi(\hat{V})$.  \Comment{Update global lower bound}
\For{$\hat{V}\in\{V^+,V^-\}$}\Comment{Update global upper bound}
\State $x^K \gets$ $K$-th iterate of~\eqref{eq:pgd_poly} with $x^0=w$.
\State $u\gets p(x^K)$.
\If{$u<U$} $U\gets u,\quad \bar{x}\gets x^K$.\EndIf
\EndFor
\EndWhile
\State \Return $L,\bar{x}$
\end{algorithmic}
\end{algorithm}



\subsection{A SBB Framework for Matrix Copositivity}\label{sec:bnb_simplex}
In this section, we focus on the problem of testing copositivity of a matrix $Q\in\reals^{n\times n}$, which reduces to computing the minimum value $p_{\min}$ of a quadratic form $p(x)=x^TQx$ over the unit simplex $\textcolor{black}{\Delta^{n-1}}$.
We develop a branch-and-bound algorithm analogous to Algorithm~\ref{alg:bnb}, tailored to the unit simplex. This algorithm can be used more generally to solve copositive programs. We initialize the algorithm with a single subregion, the unit simplex $\textcolor{black}{\Delta^{n-1}}=\conv\{I_n\}$, corresponding to the simplicial disjunction $\{I_n\}$.


\begin{figure}[!ht]
    \centering
    \includegraphics[width=0.85\textwidth]{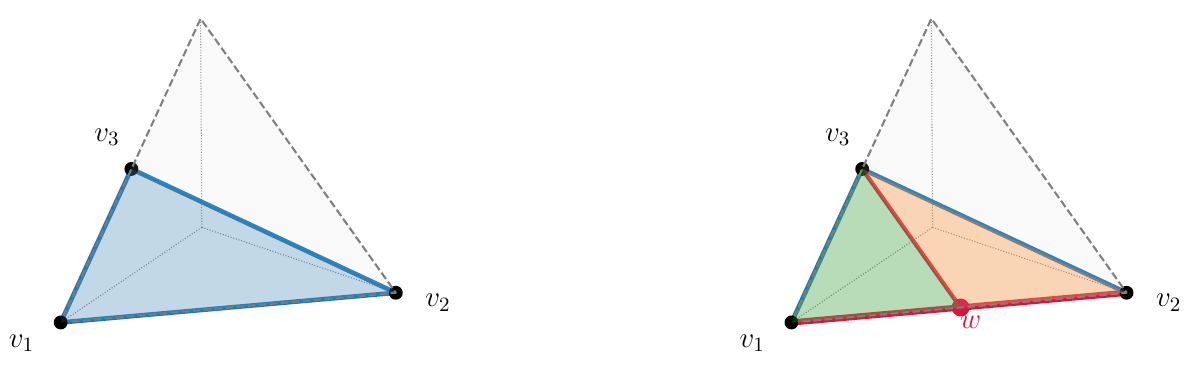}
    \caption{Branching step for $\conv\{V\}=\conv\{v_1,v_2,v_3\}$. Left: $\mcQ=\conv\{V\}$ is the blue triangle; the unit simplex~$\textcolor{black}{\Delta^2}$ shown dashed. Right: the longest edge~$(v_1,v_2)$ is bisected at its midpoint~$w=(v_1+v_2)/2$, splitting~$\mcQ$ into~$\mcQ^+$ and~$\mcQ^-$.}
    \label{fig:branching_simplex}
\end{figure}

\paragraph{Bounding.}
Each subregion of the unit simplex takes the form $\mcQ = \conv\{V\}$ for some invertible
matrix $V~=~[v_1 \,\ldots\, v_n]\in\reals^{n \times n}$. Note that the minimum of~$p$ over~$\mcQ$ equals the optimal value of
\begin{equation*}
    \begin{array}{cl}
        \max\limits_{t\in\reals} & t\\
        \st & V^T(Q-tJ_n)V\in\mcC_n,
    \end{array}
\end{equation*}
where $J_n$ is the $n\times n$ all-one matrix and $\mcC_n$ is the cone of $n\times n$ copositive matrices. Using the inner approximations of $\mcC_n$ from Section~\ref{sec:copositive}, we compute a lower bound on~$p$ over $\mcQ$ by solving the following semidefinite program:
\begin{equation}\label{eq:DiSOS_lb_mat}
    \psi(V)\defeq \begin{array}[t]{cl}
        \max\limits_{t\in\reals,N\in\reals^{n\times n}} & t\\
        \st & V^T(Q-tJ_n)V-N\succeq 0,\\
            & N\ge 0.
    \end{array}
\end{equation}
\textcolor{black}{To obtain an upper bound on~$p$ over $\mcQ$, we run $K$ projected gradient descent steps starting from a column of~$V$:}
\begin{equation}\label{eq:pgd_copos}
    x^{k+1} = \Pi_{\conv\{V\}}(x^k - 2\beta Qx^k),\quad k=0,\ldots,K-1.
\end{equation}
Here, $\Pi_{\conv\{V\}}$ denotes the projection onto $\conv\{V\}$ 
and $\beta>0$ is the stepsize.
\paragraph{Branching.}
At each iteration, we select a subregion with the smallest lower bound and split it by bisecting its longest edge. More specifically, if 
\begin{equation*}
\mcQ=\conv\{V\}
\end{equation*}
denotes the selected subregion, where $V = [v_1 \,\ldots\, v_n] \in\reals^{n\times n}$ is some invertible matrix, we let $(i,j)\in \operatornamewithlimits{argmax}_{1 \le k, l \le n} \|v_k-v_l\|_2$, and set~$w= (v_{i}+v_{j})/2$, forming two further subregions
\[
    \mcQ^+=\conv\{V^+\},\quad \mcQ^-=\conv\{V^-\},\]
where $V^+=[v_{1}\, \ldots\, v_{i-1}\, v_{i+1}\, \ldots\, v_{n}\, w]$ and $V^-=[v_{1}\, \ldots\, v_{j-1}\, v_{j+1}\, \ldots\, v_{n}\, w]$.
\textcolor{black}{Figure~\ref{fig:branching_simplex} illustrates an example of this construction when $n=3$.}


\textcolor{black}{
The 
overall branch-and-bound scheme is presented in Algorithm~\ref{alg:bnb_simplex}. 
By the same argument as in Section~\ref{sec:adaptive_grid}, one can show that for any prescribed tolerance $\epsilon_{\text{tol}}>0$, 
Algorithm~\ref{alg:bnb_simplex} terminates. If the algorithm returns a nonnegative lower bound $L$, then $Q$ is certified to be copositive. If it returns a point $\bar{x}\in\Delta^{n-1}$ with $\bar{x}^TQ\bar{x}<0$, then $Q$ is not copositive. In the remaining case, copositivity is inconclusive with the prescribed tolerance.}

\begin{algorithm}[H]
\caption{Spatial Branch-and-Bound Algorithm for Testing Matrix Copositivity}
\label{alg:bnb_simplex}
\begin{algorithmic}
\Require A symmetric matrix $Q\in\reals^{n\times n}$, a tolerance $\epsilon_{\text{tol}}>0$.
\Ensure A lower bound $L$ on the minimum value of $x^TQx$ over $\textcolor{black}{\Delta^{n-1}}$ and a point $\bar{x}\in\textcolor{black}{\Delta^{n-1}}$ such that
$\bar{x}^TQ\bar{x}-L\le \epsilon_{\text{tol}}(1+|L|+|\bar{x}^TQ\bar{x}|)$.
\Initialization
$\mcT\gets\{I_n\}$. 
\Comment{Create a branch-and-bound tree}
\State $L\gets \psi(I_n),\quad U\gets \min_{1\le i\le n} e_i^TQe_i$. 
\Comment{Initialize global lower and upper bounds}
\While{$U-L> \epsilon_{\text{tol}}(1+|L|+|U|)$}
\State $V\gets \argmin_{\hat{V}\in\mcT} \psi(\hat{V}),\quad \mcT\gets \mcT\setminus\{V\}$. 
\State $v_1, \ldots, v_n \gets$ columns of $V$.
\State $(i, j) \gets \operatornamewithlimits{argmax}_{1 \le k, l \le n} \|v_k-v_l\|_2,\quad w \gets (v_i + v_j)/2$.
\State $V^+ \gets [v_1\, \ldots\, v_{i-1}\, v_{i+1}\, \ldots\, v_n\, w],\quad V^- \gets [v_1\, \ldots\, v_{j-1}\, v_{j+1}\, \ldots\, v_n\, w]$. \Comment{Branch}
\State $\mcT \gets \mcT \cup \{V^+,V^-\}$.
\State $L\gets \min_{\hat{V}\in \mcT} \psi(\hat{V})$.  \Comment{Update global lower bound}
\For{$\hat{V}\in\{V^+,V^-\}$}\Comment{Update global upper bound}
\State $x^K \gets$ $K$-th iterate of~\eqref{eq:pgd_copos} with $x^0=w$.
\State $u\gets (x^K)^TQx^K$.
\If{$u<U$} $U\gets u,\quad \bar{x}\gets x^K$.\EndIf
\EndFor
\EndWhile
\State \Return $L, \bar{x}$
\end{algorithmic}
\end{algorithm}


\subsection{Numerical Experiments}\label{sec:num}
In this subsection, we present some numerical experiments focusing on certifying nonnegativity of non-sos polynomials, and solving copositive programs with applications to nonconvex quadratic programming and the maximum clique problem in combinatorial optimization. 
The code and data used for the numerical experiments are available in our GitHub repository.\footnote{\url{https://github.com/YH7422/DisjunctiveSOS}}
\subsubsection{Polynomial Minimization}\label{sec:num_poly}
We apply Algorithm~\ref{alg:bnb} to minimize 
several classical polynomials for which the standard sos test (see, e.g.,~\cite{Parrilo2003}) fails to return the minimum value. The examples, drawn from~\cite{Reznick2000, Delzell1980, ahmadi2017sum}, are listed in the following table. 

\begin{table}
  \centering
  \small
  \begin{tabular}{llll}\toprule
Name & $n$ & $d$ &\hfil Expression\\\midrule
    Motzkin & 3 & 6 & $M_h(x_1,x_2,x_3)=x_1^4x_2^2+x_1^2x_2^4-3x_1^2x_2^2x_3^2+x_3^6$\\
        Robinson-1 & 3 & 6 & \hspace{-1.6ex} $\begin{array}[t]{ll}R_1(x_1,x_2,x_3) =&x_1^6+x_2^6+x_3^6- (x_1^4x_2^2+x_1^2x_2^4+x_1^4x_3^2\\ &+x_1^2x_3^4+x_2^4x_3^2+x_2^2x_3^4)+3x_1^2x_2^2x_3^2\end{array}$\\
  Robinson-2 & 4 & 4& \hspace{-1.6ex} $\begin{array}[t]{ll}R_2(x_1,\ldots,x_4)=&x_1^2(x_1-x_4)^2+x_2^2(x_2-x_4)^2+x_3^2(x_3\\&-x_4)^2+2x_1x_2x_3(x_1+x_2+x_3-2x_4)\end{array}$\\
  Choi-Lam-1 & 4 & 4 & $        CL_1(x_1,\ldots,x_4)=x_1^2x_2^2+x_1^2x_3^2+x_2^2x_3^2+x_4^4-4x_4x_1x_2x_3$\\
  Choi-Lam-2 & 3 & 6 & $CL_2(x_1,x_2,x_3)=x_1^4x_2^2+x_2^4x_3^2+x_3^4x_1^2-3x_1^2x_2^2x_3^2$\\
  Lax & 5&4 &$L(x_1,\ldots,x_5)=\sum_{i=1}^5\prod_{j\neq i}(x_i-x_j)$ \\
  Schm\"udgen & 3 & 6&\hspace{-1.6ex} $\begin{array}[t]{ll}SC(x_1,x_2,x_3)=&200(x_1^3-4x_1x_3^2)^2+200(x_2^3-4x_2x_3^2)^2\\&+(x_2^2-x_1^2)x_1(x_1+2x_3)(x_1^2-2x_1x_3\\&+2x_2^2-8x_3^2)\end{array}$\\
\textsc{Partition} & 6 & 4& $P(x_1,\ldots,x_6)=\left(\sum_{i=1}^5x_i\right)^2x_6^2+\sum_{i=1}^5(x_i^2-x_6^2)^2$\\
  Delzell & 4 & 8 & $D(x_1,\ldots,x_4)=x_1^4x_2^2x_4^2+x_2^4x_3^2x_4^2+x_1^2x_3^4x_4^2-3x_1^2x_2^2x_3^2x_4^2+x_3^{8}$\\
  Stengle-$k$ & 3 &$4k+2$  & $S_k(x_1,x_2,x_3)=x_1^{2k+1}x_3^{2k+1}+(x_2^2x_3^{2k-1}-x_1^{2k+1}-x_1x_3^{2k})^2$\\
  \bottomrule
  \end{tabular}
\end{table}

We record the number of subregions generated by Algorithm~\ref{alg:bnb} with $\epsilon_{\text{tol}}=10^{-4}$, using a single projected gradient descent step in~\eqref{eq:pgd_poly} for computing an upper bound at each node of the branch-and-bound tree. We also experiment with the two types of initialization described in Section~\ref{sec:adaptive_grid}. Table~\ref{tab:num_poly} shows that with both initializations, the algorithm terminates with a small number of subregions in most cases. 


\begin{table}
  \centering
  \caption{The number of subregions produced by Algorithm~\ref{alg:bnb} with two types of initialization described in Section~\ref{sec:adaptive_grid}.}
  \label{tab:num_poly}
  \small
  \begin{tabular}{@{}c@{\hspace{1.5em}}c@{}}
  \begin{minipage}[t]{0.45\textwidth}
    \centering
    \begin{tabular}{lll}
      \toprule
      & \multicolumn{2}{c}{\# subregions} \\
      \multirow{-2}{*}{Name} & Init 1 & Init 2 \\\midrule
      Motzkin    & 4   & 7   \\
      Robinson-1 & 4   & 8   \\
      Robinson-2 & 8   & 19  \\
      Choi-Lam-1 & 5   & 15  \\
      Choi-Lam-2 & 4   & 8   \\
      Lax        & 98 & 149 \\
      Schm\"udgen & 4 & 5 \\\bottomrule
    \end{tabular}
  \end{minipage}
  &
  \begin{minipage}[t]{0.45\textwidth}
    \centering
    \begin{tabular}{lll}
      \toprule
      & \multicolumn{2}{c}{\# subregions} \\
      \multirow{-2}{*}{Name} & Init 1 & Init 2 \\\midrule
          \textsc{Partition} & 69 & 161 \\
        Delzell & 8 & 5 \\
      Stengle-1   & 4 & 10 \\
      Stengle-2   & 4 & 4 \\
      Stengle-3   & 4 & 4 \\
      Stengle-4   & 4 & 4 \\
      Stengle-5   & 4 & 4 \\
       \bottomrule
    \end{tabular}
  \end{minipage}
  \end{tabular}

    \vspace{.5em}
  \begin{minipage}{0.9\textwidth}
    \small \textit{Remark:} The number of subregions can be lowered, sometimes significantly, by a better implementation of the branch-and-bound algorithm. See \url{https://github.com/YH7422/DisjunctiveSOS} for heuristic improvements to our implementation made by Claude Code, which result, e.g., in 64 (Init 1) and 49 (Init 2) regions for the Lax polynomial, and 32 (Init 1) and 35 (Init 2) regions for the \textsc{Partition} polynomial.      
  \end{minipage}
\end{table}

Figures~\ref{fig:poly_R2_PT_init1} and~\ref{fig:poly_R2_PT_init0} show the lower and upper bounds on the minimum values of the Robinson-2 polynomial and the \textsc{Partition} polynomial versus the number of subregions produced by Algorithm~\ref{alg:bnb}.

\begin{figure}[H]
\centering
\includegraphics[width=0.45\textwidth]{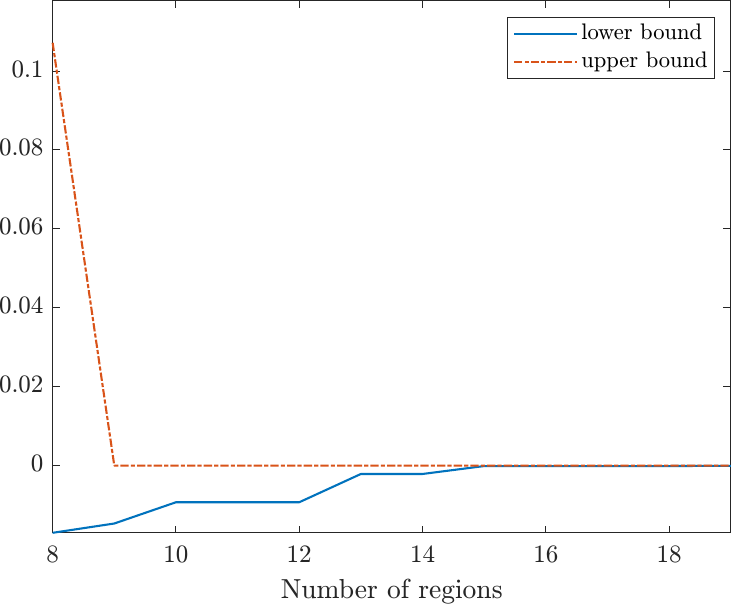}\hfill
\includegraphics[width=0.45\textwidth]{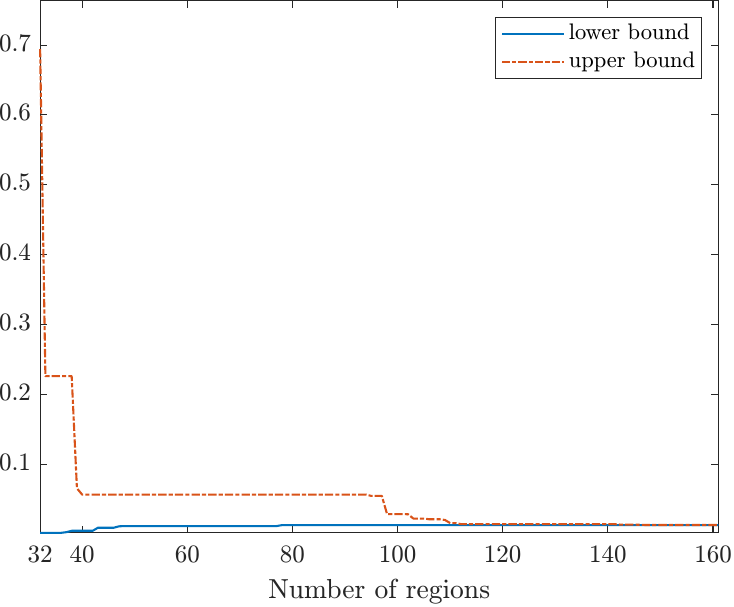}
    \caption{Lower and upper bounds on the minimum value of a given polynomial versus the number of subregions produced by Algorithm~\ref{alg:bnb} with the first type of initialization.
    Left: Robinson-2 polynomial. Right: \textsc{Partition} polynomial.}
\label{fig:poly_R2_PT_init1}
\end{figure}

\begin{figure}[H]
\centering
\includegraphics[width=0.44\textwidth]{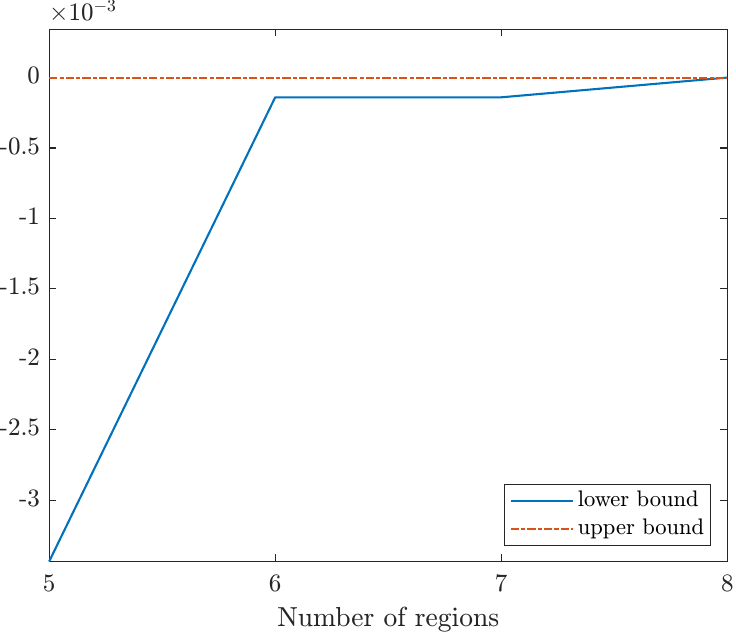}\hfill
\includegraphics[width=0.45\textwidth]{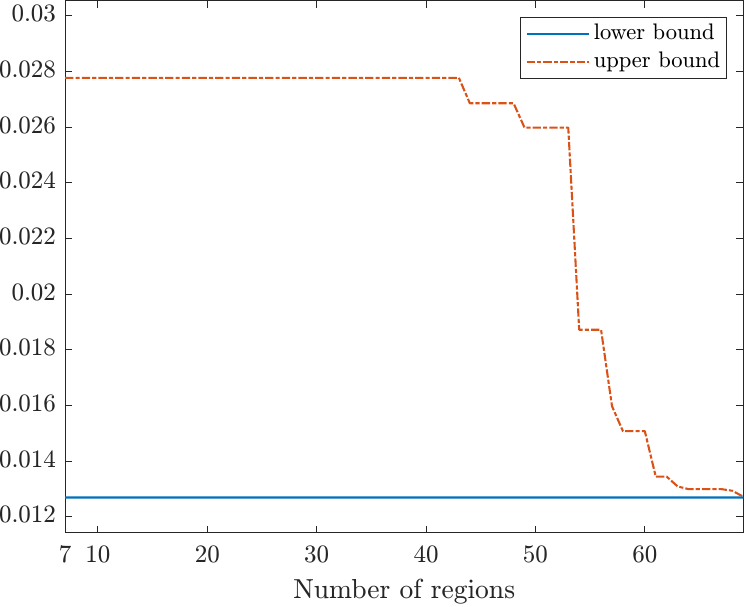}
    \caption{Lower and upper bounds on the minimum value of a given polynomial versus the number of subregions produced by Algorithm~\ref{alg:bnb} with the second type of initialization.
    Left: Robinson-2 polynomial. Right: \textsc{Partition} polynomial.}
\label{fig:poly_R2_PT_init0}
\end{figure}

\subsubsection{Applications to Copositive Programming}\label{sec:num_cop}

We now present some numerical experiments with Algorithm~\ref{alg:bnb_simplex}, which can solve copositive programs or the closely related problem of minimizing a quadratic form over the unit simplex.

\paragraph{Solving nonconvex quadratic \textcolor{black}{programs} (QPs).}

We apply Algorithm~\ref{alg:bnb_simplex} to four nonconvex QP instances of the form $\min_{x\in\textcolor{black}{\Delta^{n-1}}}\; x^TQx$ taken from~\cite{Bundfuss2009}.
The four matrices defining the objective functions of these QPs are as follows:
\allowdisplaybreaks
\begin{alignat*}{4}
Q_1 &= \begin{bmatrix}
1 & 0 & 1 & 1 & 0 \\
0 & 1 & 0 & 1 & 1 \\
1 & 0 & 1 & 0 & 1 \\
1 & 1 & 0 & 1 & 0 \\
0 & 1 & 1 & 0 & 1
\end{bmatrix},
&\quad
Q_2 &= {\footnotesize \left[\begin{array}{llllllllllll}
1 & 0 & 0 & 0 & 0 & 0 & 1 & 1 & 1 & 1 & 1 & 1 \\
0 & 1 & 0 & 0 & 1 & 1 & 0 & 0 & 1 & 1 & 1 & 1 \\
0 & 0 & 1 & 1 & 0 & 1 & 0 & 1 & 0 & 1 & 1 & 1 \\
0 & 0 & 1 & 1 & 1 & 0 & 1 & 0 & 1 & 0 & 1 & 1 \\
0 & 1 & 0 & 1 & 1 & 0 & 1 & 1 & 0 & 1 & 0 & 1 \\
0 & 1 & 1 & 0 & 0 & 1 & 1 & 1 & 1 & 0 & 0 & 1 \\
1 & 0 & 0 & 1 & 1 & 1 & 1 & 0 & 0 & 1 & 1 & 0 \\
1 & 0 & 1 & 0 & 1 & 1 & 0 & 1 & 1 & 0 & 1 & 0 \\
1 & 1 & 0 & 1 & 0 & 1 & 0 & 1 & 1 & 1 & 0 & 0 \\
1 & 1 & 1 & 0 & 1 & 0 & 1 & 0 & 1 & 1 & 0 & 0 \\
1 & 1 & 1 & 1 & 0 & 0 & 1 & 1 & 0 & 0 & 1 & 0 \\
1 & 1 & 1 & 1 & 1 & 1 & 0 & 0 & 0 & 0 & 0 & 1
\end{array}\right]},
\\[1em]
{Q_3} &= {\footnotesize \begin{bmatrix}
-14 & -15 & -16 & 0 & 0 \\
-15 & -14 & -12.5 & -22.5 & -15 \\
-16 & -12.5 & -10 & -26.5 & -16 \\
0 & -22.5 & -26.5 & 0 & 0 \\
0 & -15 & -16 & 0 & -14
\end{bmatrix},}
&\quad
{ Q_4} &= {\footnotesize \begin{bmatrix}
0.9044 & 0.1054 & 0.5140 & 0.3322 & 0 \\
0.1054 & 0.8715 & 0.7385 & 0.5866 & 0.9751 \\
0.5140 & 0.7385 & 0.6936 & 0.5368 & 0.8086 \\
0.3322 & 0.5866 & 0.5368 & 0.5633 & 0.7478 \\
0 & 0.9751 & 0.8086 & 0.7478 & 1.2932
\end{bmatrix}.}
\end{alignat*}

We record the number of subregions generated by Algorithm~\ref{alg:bnb_simplex} with $\epsilon_{\text{tol}}=10^{-6}$. 
We perform 5 projected gradient descent steps in~\eqref{eq:pgd_copos} to obtain upper bounds at each node of the branch-and-bound tree. 
Table~\ref{tab:num_cop} compares our subregion counts against those reported in~\cite{Bundfuss2009} using an LP-based adaptive approximation algorithm. 


\begin{table}
  \centering
    \caption{The number of subregions produced by Algorithm~\ref{alg:bnb_simplex} to solve nonconvex QPs.}
      \label{tab:num_cop}
      \small
  \begin{tabular}{llllll}
    \toprule
 Instance & $Q_1$ & $Q_2$ & $Q_3$ & $Q_4$\\
    \midrule
    Adaptive Linear Approximation~\cite{Bundfuss2009} & 6 & 158& 44 & 27\\
    Algorithm~\ref{alg:bnb_simplex}& 2&42 & 5&17\\\bottomrule
  \end{tabular}
\end{table}

Figure~\ref{fig:num_copositive} shows the lower and upper bounds on the optimal value of the four QPs versus the number of subregions.

\begin{center}
\includegraphics[width=0.45\textwidth]{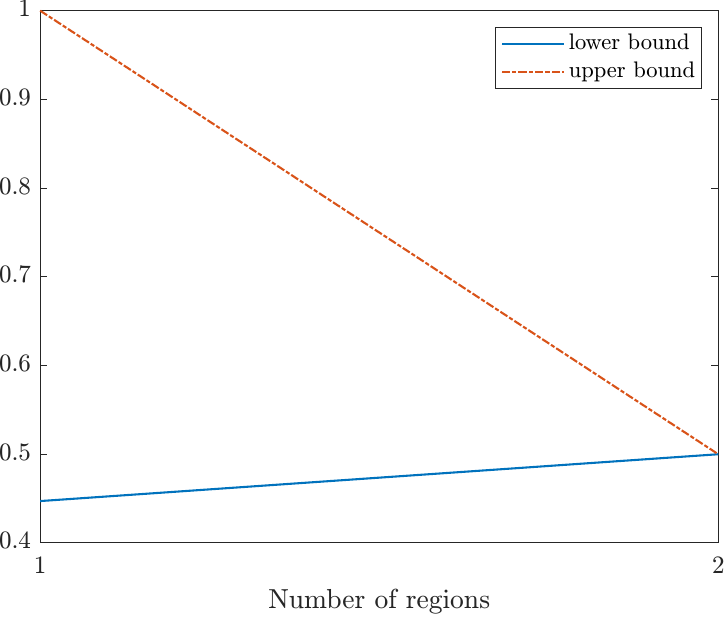}\hfill
\includegraphics[width=0.45\textwidth]{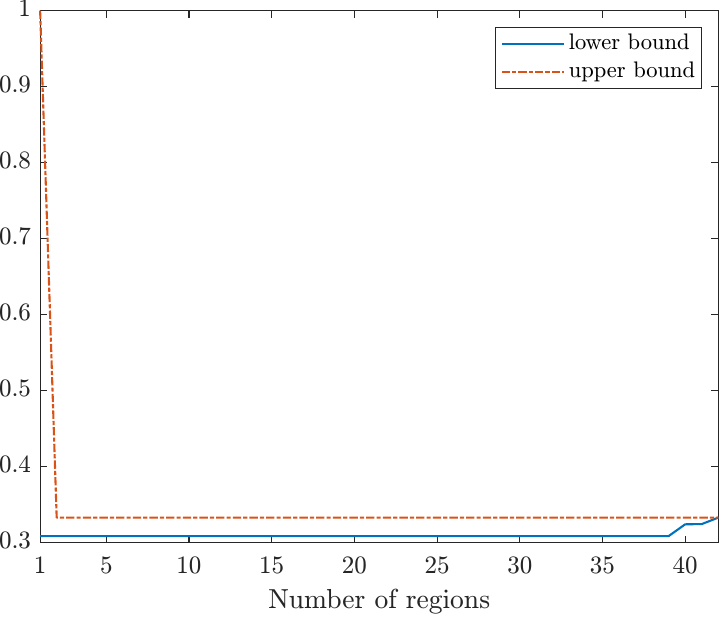}\\
\includegraphics[width=0.45\textwidth]{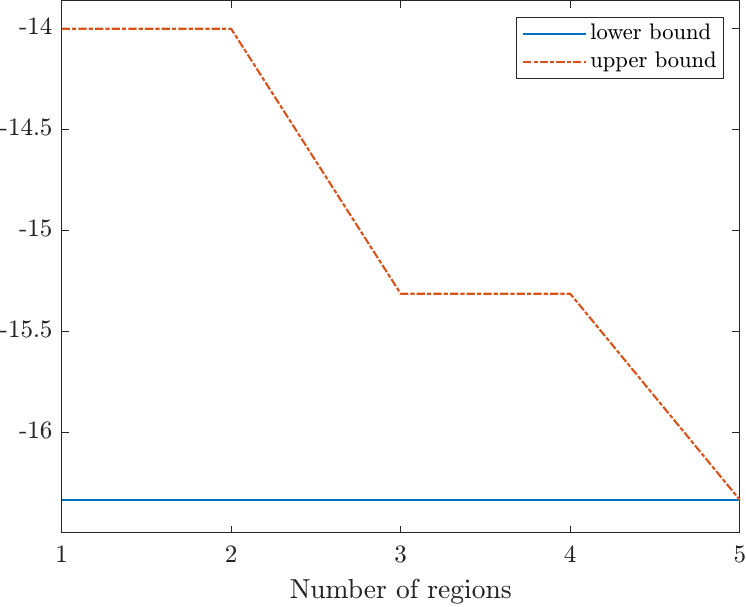}\hfill
\includegraphics[width=0.44\textwidth]{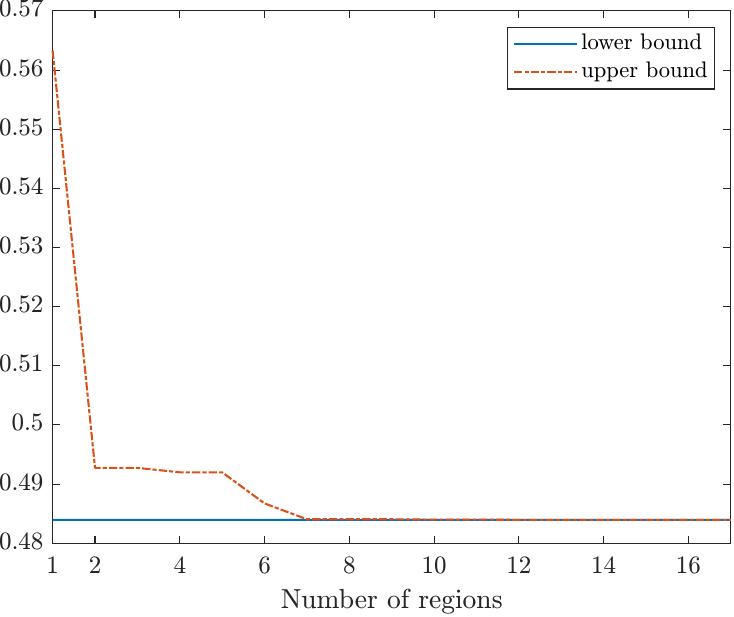}
\captionof{figure}{Lower and upper bounds on the optimal value of the QP $\min_{x\in\textcolor{black}{\Delta^{n-1}}}\; x^TQx$ produced by Algorithm~\ref{alg:bnb_simplex} versus the number of subregions. Top left: $Q=Q_1$. Top right: $Q=Q_2$. Bottom left: $Q=Q_3$. Bottom right: $Q=Q_4$.}
\label{fig:num_copositive}
\end{center}


\paragraph{Computing the clique number of a graph.}
A clique in a graph is a set of pairwise adjacent vertices. The clique number of a graph $G$, denoted by $\omega(G)$, is the size of a maximum
clique in $G$. 
By a theorem of Motzkin and Straus~\cite{Motzkin1965}, we have
\[\frac{1}{\omega(G)}=\min\limits_{x\in\textcolor{black}{\Delta^{n-1}}} x^T(I_n+\bar{A})x,\]
where $\bar{A}$ is the adjacency matrix of the complement graph $\bar{G}$; i.e., the graph obtained by swapping the edges and non-edges of $G$. Equivalently, the clique number can be written as the optimal value of the copositive program
\begin{equation}\label{eq:clique}
\begin{aligned}
    \omega(G)=\ &\min_{k\in\reals}\ k\\
    &\ \st\ k(I_n+\bar{A}) - J_n\in\mcC_n.
\end{aligned}    
\end{equation}
To estimate the clique number by Algorithm~\ref{alg:bnb_simplex}, at each node of the branch-and-bound tree, we perform 10 projected gradient descent steps in~\eqref{eq:pgd_copos} with $Q=I_n+\bar{A}$ to obtain a lower bound on $\omega(G)$, and apply our inner approximation of $\mcC_n$ to~\eqref{eq:clique} to obtain an upper bound on $\omega(G)$. 
Since the clique number is an integer, the algorithm terminates when the ceiled lower bound and the floored upper bound coincide.

We present some experiments on Erd\"os-R\'enyi random graphs~\cite{Erdos1959} $G(n,p)$, where $n$ vertices are connected pairwise and independently with probability $p\in(0,1)$. 
We generate four random instances with $n=75$ and $p=0.5$, and compare our bounds against the Schrijver upper bound $\vartheta'(\bar{G})$ on $\omega(G)$~\cite{Schrijver1979}.\footnote{The Schrijver bound $\vartheta'(\bar{G})$ satisfies $\omega(G)\leq \vartheta'(\bar{G}) \leq \vartheta(\bar{G})$, where $\vartheta$ is the more familiar Lov\'asz theta function~\cite{Lovasz1979}.} 
Table~\ref{tab:clique_random} shows that our algorithm successfully determines the exact clique number in all four instances using a moderate number of subregions.

\begin{table}
\centering
\caption{Lower and upper bounds on the clique number, and the number of subregions produced by Algorithm~\ref{alg:bnb_simplex} for four random instances of $G(75, 0.5)$.}
\label{tab:clique_random}
\small
\begin{tabular}{cccccc}
\toprule
Instance & $\omega(G)$ & $\vartheta'(\bar{G})$ & Upper bound & Lower bound & \# subregions \\
\midrule
1 & 8 & 9.0336 & 8.9014 & 7.9956 & 8 \\
2 & 8 & 9.1274 & 8.9815 & 7.9967 & 48 \\
3 & 8 & 8.9770 & 8.8686 & 7.9950 & 4 \\
4 & 8 & 9.1400 & 8.9952 & 7.9973 & 16 \\
\bottomrule
\end{tabular}
\end{table}

Figure~\ref{fig:num_clique} shows the lower and upper bounds, produced by Algorithm~\ref{alg:bnb_simplex}, on the clique number of the four random instances of $G(75,0.5)$ versus the number of subregions.

\begin{center}
\includegraphics[width=0.45\textwidth]{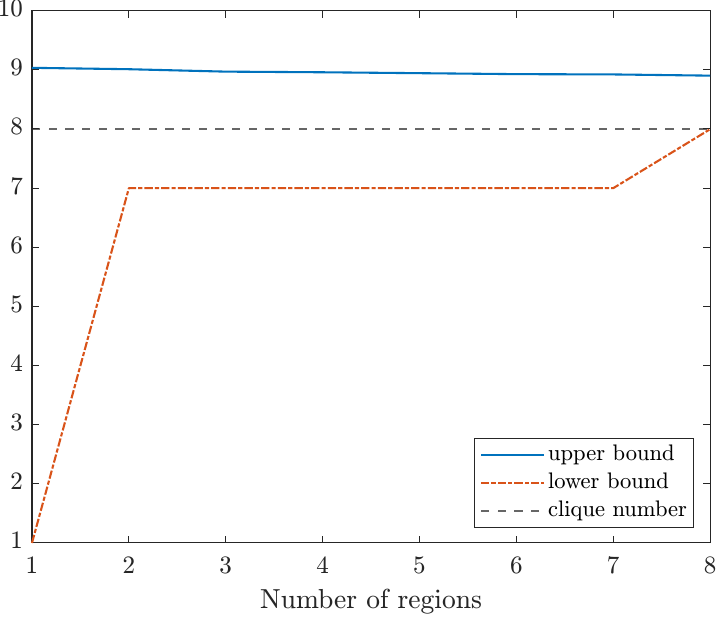}\hfill
\includegraphics[width=0.45\textwidth]{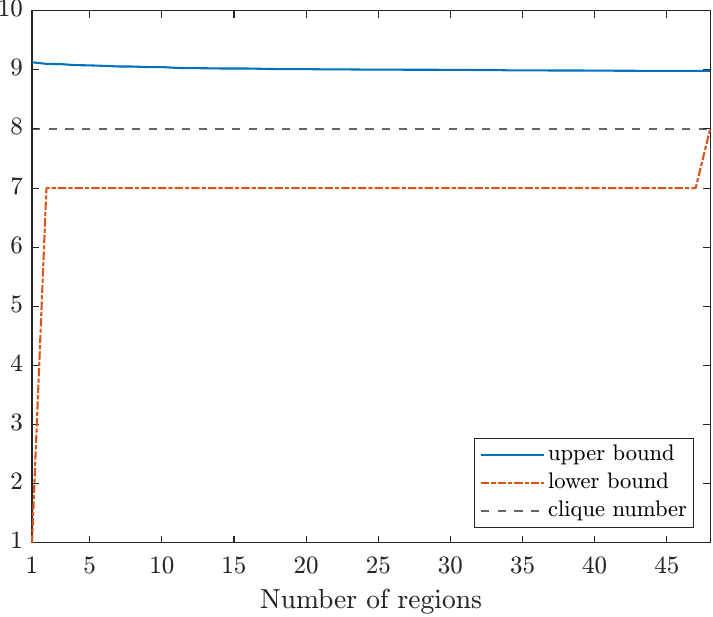}\\[-0.5ex]
\includegraphics[width=0.45\textwidth]{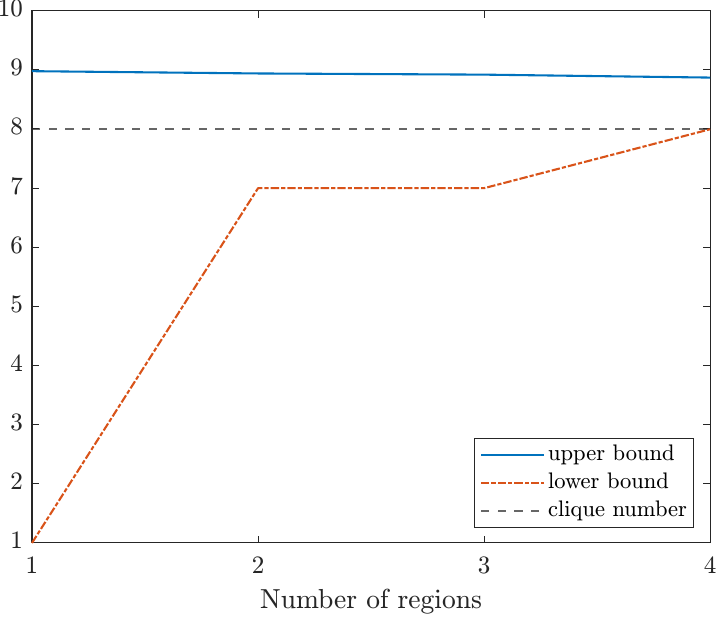}\hfill
\includegraphics[width=0.45\textwidth]{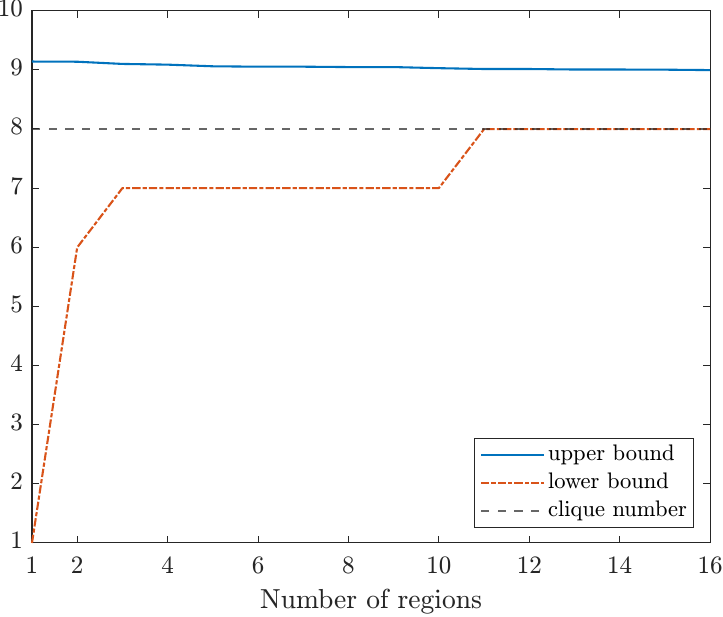}
\captionof{figure}{Lower and upper bounds on the clique numbers of four random instances of $G(75, 0.5)$ produced by Algorithm~\ref{alg:bnb_simplex} versus the number of subregions.}
\label{fig:num_clique}
\end{center}

\section{Conclusions and Future Directions}\label{sec:discussion}
In this paper, we introduced the disjunctive sum of squares method for certifying nonnegativity of polynomials.
Unlike the classical sum of squares approach, which certifies nonnegativity via a single algebraic identity, our method constructs multiple algebraic identities, which together certify nonnegativity. 
We showed that by doing so, the degrees of the sum of squares polynomials involved in the proof (and hence the size of the underlying semidefinite constraints) can be kept low and fixed. We proposed algorithms that can search for disjunctive proofs of nonnegativity, and studied extensions of our method to copositive programs and polynomial optimization problems.

{
This work leaves several interesting directions for future research. While we proposed a general reduction from constrained polynomial optimization problems to unconstrained ones in Section~\ref{sec:hierarchy}, future work could tailor the design of algebraic disjunctions to the geometry of specific feasible sets (as we did, e.g., in the case of the nonnegative orthant in Section~\ref{sec:copositive}). 
Another interesting direction is to develop more efficient implementations of the disjunctive sum of squares approach that in addition to utilizing spatial {branch-and-bound} (cf.\ Section~\ref{sec:bnb}), potentially exploit the natural parallelism across subregions. 
Along these lines, it would be interesting to combine the disjunctive sum of squares method with techniques for exploiting sparsity~\cite{Waki2006, Zheng2022, Zheng2021} and symmetry~\cite{Gatermann2004, Riener2013} in sum of squares programs. Another avenue to study is the dual side of our framework through the theory of moments~\cite{lasserre2001global, Lasserre2019}, which has proven useful for extracting solutions in polynomial optimization problems~\cite{Henrion2005}.
Finally, a promising direction is the design of instance-specific algebraic disjunctions that require fewer subregions than the universal constructions provided in this paper (cf.\ Theorems~\ref{cor:strictly_pos} and~\ref{cor:strictly_pos_new}). The branch-and-bound framework of Section~\ref{sec:bnb} constructs such disjunctions adaptively, but there could be techniques that in addition exploit algebraic properties of the given problem instance.}

{We end with an open problem of theoretical interest: does there exist a nonnegative form that admits no disjunctive sum of squares proof of nonnegativity with any number of identities? This question has two natural variants: one where the degree of the sum of squares proof is required to be the same as {that of} the given form, and one where it is allowed to be larger. By Theorem~\ref{cor:strictly_pos}, any such form must have a zero.
A natural candidate would be a polynomial with a {``bad point''}; see, e.g.,~\cite{Reznick2005, Benoist2021} and references therein. 
An example of such a polynomial is Delzell's polynomial $D$ (see~\cite{Delzell1980})
$$D(x_1,\ldots,x_4)=x_1^4x_2^2x_4^2+x_2^4x_3^2x_4^2+x_1^2x_3^4x_4^2-3x_1^2x_2^2x_3^2x_4^2+x_3^{8},$$
which was also a test case for us in Section~\ref{sec:num_poly}. It is known that while $D$ is nonnegative, $\left(\sum_{i=1}^4 x_i^2\right)^k D(x_1,\ldots,x_4)$ is not sos for any integer $k \ge 0$, which implies that the hierarchy of SDPs based on Reznick's Positivstellensatz~\cite{reznick1995uniform} cannot certify nonnegativity of~$D$. 
It turns out, however, that the disjunctive sum of squares approach can produce a low-degree proof of nonnegativity of~$D$ only with two subregions: 
\ifpreprint
\[
\begin{aligned}
D(x_1,\ldots,x_4) &= (x_1^2x_2x_4 - x_1x_3^2x_4)^2 + (x_2^2x_3x_4 - x_1x_2x_3x_4)^2 + (x_3^4)^2 + 2x_1x_2(x_1x_3x_4-x_2x_3x_4)^2\\
    & = (x_1^2x_2x_4 + x_1x_3^2x_4)^2 + (x_2^2x_3x_4 + x_1x_2x_3x_4)^2 + (x_3^4)^2 - 2x_1x_2(x_1x_3x_4+x_2x_3x_4)^2.
    \end{aligned}
\]
\else
\[
\begin{aligned}
D(x_1,\ldots,x_4) ={}& (x_1^2x_2x_4 - x_1x_3^2x_4)^2 + (x_2^2x_3x_4 - x_1x_2x_3x_4)^2 + (x_3^4)^2\\
		     &\hspace{3em} + 2x_1x_2(x_1x_3x_4-x_2x_3x_4)^2\\
    ={}& (x_1^2x_2x_4 + x_1x_3^2x_4)^2 + (x_2^2x_3x_4 + x_1x_2x_3x_4)^2 + (x_3^4)^2\\
       &\hspace{3em} - 2x_1x_2(x_1x_3x_4+x_2x_3x_4)^2.
    \end{aligned}
\]
\fi
}

\section*{Acknowledgments}
We are grateful to Abraar Chaudhry, Georgina Hall, and Jeffrey Zhang for insightful discussions.

\ifpreprint\else
\section*{Statements and Declarations}
\bmhead{Funding}
A.\,A.\,Ahmadi was partially supported by the Sloan Fellowship, the Princeton AI Lab Seed Grant, the Princeton SEAS Innovation Grant, and a Research Gift in Mathematical Optimization.
Y.\,Hua was partially supported by the Princeton AI Lab Seed Grant, the Princeton SEAS Innovation Grant, and the IBM PhD Fellowship.

\bmhead{Competing Interests}
The authors have no competing interests to declare that are relevant to the content of this article.

\bmhead{Data Availability}
The code and data that support the findings of this study are openly available in the DisjunctiveSOS repository at \url{https://github.com/YH7422/DisjunctiveSOS}.
\fi

\ifpreprint
\printbibliography
\else
\bibliography{bibliography}
\fi

\end{document}